\newtheorem{thm}{Theorem}[section]
\newtheorem{cor}[thm]{Corollary}
\newtheorem{lem}[thm]{Lemma}
\newtheorem{prop}[thm]{Proposition}
\theoremstyle{definition}
\newtheorem*{defi}{Definition}
\newtheorem{conj}{Conjecture}
\theoremstyle{remark}
\newtheorem{remark}{Remark}
\numberwithin{equation}{section}
\begin{document}

\title{A Schanuel Property for $j$}
\author{Sebastian Eterovi\'c}
\date{\today}

\address{Mathematical Institute -- University of Oxford, UK}

\email{eterovic@maths.ox.ac.uk}

\maketitle

\begin{abstract}
    I give a model-theoretic setting for the modular $j$ function and its derivatives. These structures, here called $j$-fields, provide an adequate setting for interpreting the Ax-Schanuel theorem for $j$ (Theorem $1.3$ of \cite{pila-tsimerman}). Following the ideas of \cite{bays-kirby-wilkie} and \cite{kirby} for exponential fields, I prove a generic transcendence property for the $j$ function.
\end{abstract}

\section{Introduction}

The aim of this work is to prove a generic transcendence property for the $j$ function in the spirit of Schanuel's conjecture (see conjecture (S) of \cite{ax}). Such a result is proven in \cite{bays-kirby-wilkie} for the exponential function in the context of exponential fields.
\begin{thm}[see Theorem $1.2$ of \cite{bays-kirby-wilkie}]
\label{thm:exp}
Let $F$ be any exponential field, let $\lambda\in F$ be exponentially transcendental, and let $\overline{x}\in F^{n}$ be such that $\exp(\overline{x})$ is multiplicatively independent. Then:
\begin{equation*}
    \mathrm{t.d.}(\exp(\overline{x}),\exp(\lambda\overline{x})/\lambda)\geq n.
\end{equation*}
\end{thm}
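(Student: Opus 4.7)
My plan is to argue by contradiction. Suppose $\mathrm{t.d.}(\exp(\bar{x}),\exp(\lambda\bar{x})/\lambda)<n$; the goal is to conclude that $\lambda$ lies in the exponential-algebraic closure $\mathrm{ecl}(\emptyset)$, contradicting the exponential transcendence of $\lambda$.

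First I would form the $2n$-tuple $\bar{y}=(\bar{x},\lambda\bar{x})\in F^{2n}$. Multiplicative independence of $\exp(\bar{x})$ guarantees that each $x_i\notin\ker(\exp)$, so in particular $x_1\neq 0$ and $\lambda=(\lambda x_1)/x_1$ lies in the field generated by $\bar{y}$. The hypothesis then yields
\begin{equation*}
\mathrm{t.d.}(\bar{y},\exp(\bar{y}))\leq\mathrm{t.d.}(\bar{x},\lambda)+\mathrm{t.d.}(\exp(\bar{x}),\exp(\lambda\bar{x})/\lambda)\leq(n+1)+(n-1)=2n.
\end{equation*}

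The crux is to oppose this with an Ax--Schanuel lower bound. Provided the entries of $\bar{y}$ are $\mathbb{Q}$-linearly independent modulo $\ker(\exp)$, the differential form of Ax's theorem---applied by introducing a derivation with respect to which $\lambda$ is nonconstant, as in \cite{bays-kirby-wilkie}---should give $\mathrm{t.d.}(\bar{y},\exp(\bar{y}))\geq 2n+1$, contradicting the previous display. The passage from the pure exponential-field setting to a setting where Ax is available is a standard specialization/embedding trick, but one must be careful that the specialized data still satisfies the linear-independence hypotheses.

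The main obstacle is establishing the required $\mathbb{Q}$-linear independence of $(\bar{x},\lambda\bar{x})$ modulo $\ker(\exp)$. Any nontrivial relation $\sum(a_i+\lambda b_i)x_i\in\ker(\exp)$ with some $b_i\neq 0$ forces $\lambda$ into $\mathbb{Q}(\bar{x},\ker(\exp))$, and hence into $\mathrm{ecl}(\bar{x})$. Using the exchange property of the pregeometry $\mathrm{ecl}$ (cf.~\cite{kirby}), one expects to reduce inductively to a smaller configuration, either by extracting an $x_i$ from the exponential-algebraic closure of the others together with $\lambda$, or by diminishing the $\mathbb{Q}$-linear span of $\bar{x}$, eventually forcing $\lambda\in\mathrm{ecl}(\emptyset)$. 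Organising this pregeometry-based induction while preserving the multiplicative-independence hypothesis at each stage is the most delicate part of the argument.
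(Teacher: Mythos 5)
Your overall plan --- form the $2n$-tuple $\bar{y}=(\bar{x},\lambda\bar{x})$, bound $\mathrm{t.d.}(\bar{y},\exp(\bar{y}))$ from above using the hypothesis, and contradict this with an Ax--Schanuel lower bound --- is in the spirit of the argument the paper carries over to the $j$-function (Lemma \ref{lem:weak} applied with $m=2n$ in Corollary \ref{cor:weak}). However, there is a genuine gap: you compare an upper bound computed over $\mathbb{Q}$ with a lower bound that Ax's theorem only gives over the constant field $C=\ker\partial$, and these do not contradict each other when some $x_i$ lie in $C$. Concretely, if $\bar{x}\subseteq C$ (e.g.\ the $x_i$ exponentially algebraic but algebraically independent over $\mathbb{Q}$, with $\exp(\bar x)$ also independent), then $\mathrm{t.d.}_{\mathbb{Q}}(\bar{y},\exp(\bar{y}))$ can genuinely equal $2n$ purely from the $(\bar{x},\exp(\bar{x}))$ part, while Ax, which requires the inputs to be outside $C$ and $\mathbb{Q}$-linearly independent \emph{modulo $C$}, applies only to the $n$ elements $\lambda\bar{x}$ and yields $\geq n+1$ --- no contradiction. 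The contradiction does exist, but only after passing to $C$: one finds $\mathrm{t.d.}(\bar{y},\exp(\bar{y})/C)\leq n$ while Ax gives $\geq n+1$. Your proposal never changes base field to $C$, so the inequality you quote, ``should give $\mathrm{t.d.}(\bar{y},\exp(\bar{y}))\geq 2n+1$,'' is not justified in general.

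The paper's proof of the analogous result (Lemma \ref{lem:weak}) handles exactly this: it fixes $p$ = the number of coordinates already in $C$, uses the disjointness lemma (Lemma \ref{lem:disjoint1}; in the exponential case this is linear disjointness of $\mathbb{Q}(\lambda)$ from $C$ over $\mathbb{Q}$) to bound $p\leq d$, estimates $\mathrm{t.d.}(\tilde{z}/C)$ for the remaining coordinates, and then applies Ax to $\tilde z$ over $C$. Your appeal to ``the exchange property of $\mathrm{ecl}$'' to ``reduce inductively to a smaller configuration'' does not reproduce this bookkeeping, and in particular does not resolve the relevant failure mode, which is not linear dependence of $\bar{y}$ modulo $\ker(\exp)$ (that is straightforward, as you observe) but rather linear dependence modulo the constant field $C$. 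You would need to make the base change to $C$ explicit and incorporate a linear-disjointness step before the Ax--Schanuel lower bound can be compared with your upper bound.
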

The notation we use is as follows: for $A$ and $B$ subsets of a given field of characteristic zero, we write $\mathrm{t.d.}(A/B)$ to denote the transcendence degree of the field extension $\mathbb{Q}(A)/\mathbb{Q}(B)$. Given some elements $z_{1},\ldots,z_{n}$ in the field $F$, we write $\overline{z}$ to denote the tuple $(z_{1},\ldots,z_{n})$, and if $f$ is a function, then $f(\overline{z})$ denotes the tuple $(f(z_{1}),\ldots,f(z_{n}))$. 

Theorem \ref{thm:exp} relies heavily on Theorem $3$ of \cite{ax} (the so-called Ax-Schanuel theorem). In \cite{pila-tsimerman}, the authors prove an analogue of the Ax-Schanuel theorem for the $j$ function (see Theorem \ref{thm:ax-schanuel-j}), so it is natural to wonder if a strategy similar to the one used in \cite{bays-kirby-wilkie} would provide a transcendence result for $j$. This is what is done here.

To do this we will define ``$j$-fields'' in analogy to the way exponential fields are defined in \cite{kirby}. We will use some standard techniques of geometric model theory (like pregeometries) and o-minimality for our approach. A very short introduction to pregeometries is given in the Preliminaries. The proof of our main result (Theorem \ref{thm:main}) relies mostly on transcendence properties of fields, so the reader not familiar with the model-theoretic concepts may still follow the overall strategy. 

Traditionally, the $j$ function is understood as a modular function defined on the upper-half plane, but we will extend it to be defined on the upper and lower half planes so that $j:\mathbb{H}^{+}\cup\mathbb{H}^{-}\rightarrow\mathbb{C}$. Given $z\in\mathbb{H}^{-}$ we define $j(z):=\overline{j(\overline{z})}$, where $\overline{z}$ is the complex conjugate of $z$ (even though the symbol is the same, it should be clear from context when $\overline{z}$ denotes complex conjugation or a tuple of elements). We do this because the condition for an element $g\in \mathrm{GL}_{2}(\mathbb{Q})$ to preserve $\mathbb{H}^{+}\cup\mathbb{H}^{-}$ setwise is easier to check: namely $\det(g)\neq 0$. The importance of this will become apparent in the proof of Lemma \ref{lem:disjoint1}. If we only considered the upper-half plane, we would have to ask that $\det(g)>0$, which would require us to include an order relation in our $j$-field. Given the nature of the transcendence properties of $j$ that we are interested in, it does not matter if we extend the domain of $j$ in the way we have done. 

The term \emph{$j$-generic} used in the next theorem is analogous to that of \emph{exponential transcendence}; we will give a precise definition in Section \ref{sec:jder}. The full statement of Theorem \ref{thm:main} is rather technical and works on all $j$-fields, but one immediate consequence to a concrete setting is:
\begin{thm}[see Corollary \ref{cor:weak}]
\label{thm:intro}
Let $\tau\in\mathbb{R}$ be $j$-generic. Suppose $z_{1},\ldots,z_{n}\in \mathbb{H}^{+}\cup\mathbb{H}^{-}$ and $g\in\mathrm{GL}_{2}(\mathbb{Q}(\tau))$ are such that $z_{1},\ldots,z_{n},gz_{1},\ldots,gz_{n}$ are in different $\mathrm{GL}_{2}(\mathbb{Q})$-orbits (pairwise). Then:
\begin{equation*}
    \mathrm{t.d.}\left(j(\overline{z}),j'(\overline{z}),j''(\overline{z}),j(g\overline{z}),j'(g\overline{z}),j''(g\overline{z})/\tau\right)\geq 3n.
\end{equation*}
\end{thm}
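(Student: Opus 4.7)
The plan is to deduce this as a specialisation of the main abstract result (Theorem \ref{thm:main}) applied to the standard $j$-field structure on $\mathbb{C}$ given by the modular $j$ function and its first two derivatives on $\mathbb{H}^{+}\cup\mathbb{H}^{-}$. The first step would be to verify that this data satisfies the axioms of a $j$-field in the sense of Section \ref{sec:jder}, and then to translate the hypotheses: $j$-genericity of $\tau\in\mathbb{R}$ should correspond to $\tau$ being transcendental in the pregeometry of $j$-closure, while the condition that $z_{1},\ldots,z_{n},gz_{1},\ldots,gz_{n}$ lie in pairwise distinct $\mathrm{GL}_{2}(\mathbb{Q})$-orbits is the natural analogue of the multiplicative independence hypothesis of Theorem \ref{thm:exp}.

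The transcendence engine behind Theorem \ref{thm:main} is the Ax-Schanuel theorem for $j$ (Theorem \ref{thm:ax-schanuel-j}), which is a statement in a differential/function-field setting. Applied to the $2n$ points $z_{1},\ldots,z_{n},gz_{1},\ldots,gz_{n}$ (viewed as functions along a deformation of the parameter $\overline{z}$), it yields a lower bound of the form
\begin{equation*}
    \mathrm{t.d.}\left(\overline{z},g\overline{z},j(\overline{z}),j'(\overline{z}),j''(\overline{z}),j(g\overline{z}),j'(g\overline{z}),j''(g\overline{z})\right)\;\geq\;6n+\mathrm{rk}(\overline{z},g\overline{z}),
\end{equation*}
where $\mathrm{rk}$ is the appropriate M\"obius-rank correction furnished by the weakly-special structure. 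Since $g\in\mathrm{GL}_{2}(\mathbb{Q}(\tau))$, the tuple $g\overline{z}$ is algebraic over $\mathbb{Q}(\tau,\overline{z})$, so on passing to transcendence degree relative to $\tau$ the preimages contribute at most $n$ units, and the $6n$ on the right forces at least $3n$ units of transcendence degree to be carried by the $j$-values alone.

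The hard part, as in the exponential blueprint of \cite{bays-kirby-wilkie}, will be the specialisation step: a function-field Ax-Schanuel bound does not directly specialise to an inequality at the specific complex point, because algebraic relations holding only at the specialisation could in principle collapse the transcendence degree. This is exactly where the $j$-genericity hypothesis on $\tau$ enters. The strategy is to combine o-minimality (through a suitable reduct of $\mathbb{R}_{\mathrm{an}}$ in which $j$ is definable on a fundamental domain of $\mathrm{SL}_{2}(\mathbb{Z})$) with the pregeometric exchange principle inspired by \cite{kirby}, so that the set of specialisations for which the inequality fails is $\varnothing$-definable and too thin to contain the $j$-generic element $\tau$. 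I expect the technically most delicate step to be keeping careful track of the M\"obius-rank correction when $g$ has entries in $\mathbb{Q}(\tau)$ rather than in $\mathbb{Q}$, since the rank of the tuple $(\overline{z},g\overline{z})$ can itself depend on $\tau$; ensuring that the final count comes out to exactly $3n$ is what coordinates the three ingredients (Ax-Schanuel, o-minimality, pregeometric genericity) into a single argument.
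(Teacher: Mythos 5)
Your plan to derive the statement as a specialisation of Theorem \ref{thm:main} has a structural problem: Theorem \ref{thm:main} (and its precursors Corollary \ref{cor:reform}, Theorem \ref{thm:gdim}, Corollary \ref{cor:ineq}, Proposition \ref{prop:moreresults}) all require the generic parameters $\tau_{1},\ldots,\tau_{m}$ to lie in $D$, i.e.\ in $\mathbb{H}^{+}\cup\mathbb{H}^{-}$, because part (a) of Proposition \ref{prop:moreresults} needs to evaluate $j$, $j'$, $j''$ at $\overline{\tau}$ and to exploit the modular relations $\Phi_{N}(j(\tau_{1}),j(z_{i}))=0$. In the statement you are asked to prove, $\tau$ is a real $j$-generic number, hence lies outside $D$, so none of that machinery applies. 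The paper flags this explicitly: Corollary \ref{cor:weak}, of which the present statement is the stated consequence, is obtained from Lemma \ref{lem:weak}, which the paper introduces with the remark that it ``uses a different method'' from the one leading to Theorem \ref{thm:main}. That lemma only requires $\tau\in K$ with $\partial(\tau)=1$ for a single $j$-derivation $\partial$, and it proceeds by a direct transcendence-degree count: it uses Lemma \ref{lem:disjoint1} to get $\mathbb{Q}(\tau)\bot^{g}_{\mathbb{Q}}\ker\partial$, bounds $\mathrm{t.d.}(\widetilde{z}/C)\le 1+d-p$ where $p$ counts those $z_{i}$ already in $C=\ker\partial$ and $d=\dim^{g}_{\mathbb{Q}(\tau)}(\overline{z})$, and then derives a contradiction by comparing with Proposition \ref{prop:as}. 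Taking $m=2n$ and noting $d\le n$ gives the bound $3m-3d\ge 3n$.

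You also misdescribe the role of o-minimality. It is not used for a ``specialisation'' or ``thin exceptional set'' argument as in some applications of the Pila--Wilkie counting philosophy; here o-minimality enters only in Proposition \ref{prop:jderonr} to construct a non-trivial $j$-derivation $\partial$ on $\mathbb{C}$ with $\partial(\tau)=1$ for a given $j$-generic $\tau\in\mathbb{R}$. The transfer from the differential Ax--Schanuel statement (Theorem \ref{thm:ax-schanuel-j}) to the arithmetic inequality at the complex point is made once and for all via this $j$-derivation and the pregeometry $j\mathrm{cl}$, encoded in Proposition \ref{prop:as}; there is no separate specialisation step to worry about. Relatedly, your Ax--Schanuel display with a vague ``M\"obius-rank correction'' and the subsequent ``$6n$ forces $3n$'' arithmetic is not a correct reading of Theorem \ref{thm:ax-schanuel-j}: the rank term there is the rank of the matrix $(\partial_{k}z_{i})_{i,k}$ of derivations, not a geodesic rank of the tuple, and the actual count in Lemma \ref{lem:weak} requires the careful split between the points of $\overline{z}$ lying in $\ker\partial$ and those outside it, plus the geodesic-disjointness input, rather than a direct subtraction. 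I would recommend reorganising the argument around Lemma \ref{lem:weak} (or first proving an analogue of it) and isolating the construction of the single $j$-derivation $\partial$ with $\partial(\tau)=1$ as a separate step before applying Proposition \ref{prop:as}.
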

Loosely, this should be understood as: if the entries of $g$ are sufficiently generic (transcendental with respect to $j$), then we obtain the above transcendence result for $j$ and its derivatives. Using o-minimality, we prove that all but countably many values for $\tau$ can be used (see Remark \ref{rem:countable}), although, just as we do not have any explicit examples of exponentially transcendental numbers, no explicit $j$-generic numbers are known.

As will be shown, $j$-fields prove to be an adequate setting in which to interpret the Ax-Schanuel theorem for $j$ (see Proposition \ref{prop:as}), that is to say, we can define certain pregeometries (here named $\mathrm{gcl}$ and $j\mathrm{cl}$) on $j$-fields that encode the contents of this result.

The paper is organized as follows. First we define $j$-fields axiomatically so as to mimic the behaviour of the $j$ function on $\mathbb{C}$. Then we study the family of pregeometries $\mathrm{gcl}_{F}$ obtained from the action of a subgroup of $\mathrm{GL}_{2}(F)$ on a field $K$, where $F$ is a subfield of $K$. We also introduce the notion of ``geodesical disjointness'', which is a concept that emulates the idea of two fields being ``linearly disjoint'' over a third. Lemmas \ref{lem:disjoint1} and \ref{lem:disjoint2} are crucial for obtaining our main theorem. 

After that, comes a section on $j$-derivations, which are derivations which respect the $j$ function and its derivatives. We define the operator $j\mathrm{cl}$ and prove that it is a pregeometry. Using standard techniques of o-minimality, we also show how to construct non-trivial $j$-derivations on $\mathbb{C}$. See \cite{macpherson} for a quick introduction to o-minimality.

The last section contains the transcendence results, the most important being Theorem \ref{thm:main} which generalises Theorem \ref{thm:intro}. As stated above, the strategy of the proofs is inspired by \cite{bays-kirby-wilkie}. We also analyse a modular version of Schanuel's conjecture and prove that, like in the case of the original Schanuel's conjecture, in $\mathbb{C}$ there are at most countably many essential counterexamples of this conjecture. At the end of this section we present some open problems. As is explained there, because of the abstract nature of derivations, it is not easy to decide when an element does not belong to the $j\mathrm{cl}$-closure of a given set, and a more explicit definition of $j\mathrm{cl}$ would be helpful. In the case of exponential fields, one can do this through Khovanskii systems (see Proposition $7.1$ of \cite{kirby}). However, the strategy used there relies on very specific properties of the exponential function, and so does not work for $j\mathrm{cl}$. 

\section{Preliminaries}

\subsection{Pregeometries}

Let $X$ be a set. A function $\mathrm{cl}:\mathcal{P}(X)\rightarrow\mathcal{P}(X)$ (here $\mathcal{P}(X)$ denotes the power set of $X$) is called a \emph{pregeometry} on $X$ if it satisfies the following properties for every $A,B\in\mathcal{P}(X)$:
\begin{enumerate}[(a)]
\item $A\subseteq\mathrm{cl}(A)$,
\item If $A\subseteq B$, then $\mathrm{cl}(A)\subseteq\mathrm{cl}(B)$,
\item $\mathrm{cl}(\mathrm{cl}(A)) = \mathrm{cl}(A)$,
\item Finite character: if $a\in\mathrm{cl}(A)$, then there is a finite subset $A_{0}\subseteq A$ such that $a\in \mathrm{cl}(A_{0})$,
\item Exchange: if $a,b\in X$ are such that $a\in\mathrm{cl}(A\cup\left\{b\right\})$ and $a\notin\mathrm{cl}(A)$, then $b\in\mathrm{cl}(A\cup\left\{a\right\})$.
\end{enumerate}
For example, if $X$ is a vector space, then we can take $\mathrm{cl}(A)$ to be the linear span of $A$. If $X$ is a field, then we can take $\mathrm{cl}(A)$ to be the set of all elements of $X$ algebraic over $A$. 

A crucial aspect of pregeometries is that they allow us to have well-defined notions of independence and dimension. A set $A\subseteq X$ is \emph{$\mathrm{cl}$-independent} if for every $a\in A$ we have that $a\notin\mathrm{cl}(A\setminus\left\{a\right\})$. The \emph{dimension} of a set $B\subseteq X$ with respect to $\mathrm{cl}$ is the cardinality of any $\mathrm{cl}$-independent set $A\subseteq B$ such that $\mathrm{cl}(A)=B$. For the basic properties of pregeometries, see Appendix C of \cite{tent-ziegler}.

\subsection{Review of Elliptic Curves and the Modular $j$-function}

We will now give a very brief review of complex elliptic curves to have enough context for the set up of the $j$-function. For the general theory of elliptic curves, and most of the results stated here, see e.g. \cite{silverman} and \cite{silverman2}.

A \emph{lattice} in $\mathbb{C}$ is a subgroup $\Lambda$ of the additive group of $\mathbb{C}$ generated by two elements $\omega_{1},\omega_{2}\in\mathbb{C}$ which are linearly independent over $\mathbb{R}$. This means that $\Lambda$ is isomorphic to $\mathbb{Z}\omega_{1}+\mathbb{Z}\omega_{2}$. The quotient space $E_{\Lambda} = \mathbb{C}/\Lambda$ has a natural complex structure induced by that of $\mathbb{C}$. In fact $E_{\Lambda}$ is a compact Riemann surface of genus $1$ and it can be realised as a complex plane projective curve by means of the Weierstrass $\wp$ associated to $\Lambda$ as follows. 

Given the lattice $\Lambda$, consider the Weierstrass $\wp$ function:
\begin{equation*}
    \wp(z) = \frac{1}{z^{2}} + \sum_{\lambda\in\Lambda, \lambda\neq 0}\left(\frac{1}{(z-\lambda)^{2}} - \frac{1}{\lambda^{2}}\right).
\end{equation*}
Using the Weierstrass $M$-test for convergence, one can see that $\wp(z)$ is a meromorphic function on $\mathbb{C}$ that is $\Lambda$-invariant. It also satisfies the differential equation: $\wp'(z)^{2} = 4\wp(z)^{3} - g_{2}\wp(z) - g_{3}$,
where:
\begin{equation*}
    g_{2} = 60\sum_{\lambda\in\Lambda,\lambda\neq 0}\frac{1}{\lambda^{4}},\qquad g_{3} = 140\sum_{\lambda\in\Lambda,\lambda\neq 0}\frac{1}{\lambda^{6}}.
\end{equation*}
So, the map $[z]\mapsto[\wp(z) : \wp'(z) : 1]$ defines an isomorphism between $E_{\Lambda}$ and the plane projective curve $Y^{2}Z = 4X^{3} - g_{2}XZ^{2}-g_{3}Z^{3}$. 

This construction motivates the following definition. An \emph{elliptic curve} is a plane projective curve given by an equation of the form:
\begin{equation}
\label{eq:elliptic}
    Y^{2}Z = 4X^{3} - aXZ^{2} - bZ^{3}, 
\end{equation}
such that the discriminant $\Delta := a^{3}-27b^{2}\neq 0$. Any lattice $\Lambda$ in $\mathbb{C}$ produces an elliptic curve $E_{\Lambda}$ by the above construction. In fact, all elliptic curves can be obtained in this way. Also notice that the analytic construction we gave of $E_{\Lambda}$ (as a quotient space $\mathbb{C}/\Lambda$) immediately shows that $E_{\Lambda}$ has a group structure (as a quotient group). But in fact the group operation is a rational map, and so it also compatible with the algebraic structure of $E_{\Lambda}$ (as a plane projective curve). For this reason we will use the name $E_{\Lambda}$ to refer both to the analytic and algebraic aspects of the elliptic curve. 

If $E$ is an elliptic curve defined by equation (\ref{eq:elliptic}), define the \emph{$j$-invariant of $E$} as:
\begin{equation*}
    j(E) := 1728\frac{a^{3}}{\Delta}.
\end{equation*}
A \emph{morphism of elliptic curves} is a morphism of algebraic varieties that is also a group homomorphism. For any elliptic curve $E'$ we have that $E$ is isomorphic to $E'$ if and only if $j(E) = j(E')$. Furthermore, given two lattices, $\Lambda$ and $\Lambda'$, $E_{\Lambda}$ is isomorphic to $E_{\Lambda'}$ if and only if there is $c\in\mathbb{C}^{\ast}$ such that $c\Lambda = \Lambda'$. So, all isomorphism classes of elliptic curves are obtained by considering only lattices of the form $\mathbb{Z} + \mathbb{Z}\tau$, where $\tau\in\mathbb{H}$. We can now formulate the definition we have been waiting for. 
\begin{defi}
The \emph{$j$-function} is the holomorphic map $j:\mathbb{H}\rightarrow\mathbb{C}$ given by $j(\tau) = j(E_{\tau})$, where $E_{\tau}$ is the elliptic curve defined by the lattice $\Lambda(\tau):=\mathbb{Z} + \mathbb{Z}\tau$. As was said in the introduction, we will extend this function to be defined on the upper and lower half-planes, so that $j:\mathbb{H}^{+}\cup\mathbb{H}^{-}\rightarrow\mathbb{C}$, where $j(\tau)$ for $\tau\in\mathbb{H}^{-}$ is defined as $j(\tau):=\overline{j(\overline{\tau})}$, where $\overline{z}$ denotes the complex conjugate of $z$. 
\end{defi}
We will now review a few properties of this function. As it can be easily verified, given $\tau, \tau'\in\mathbb{H}$, we have that $\Lambda(\tau) = \Lambda(\tau')$ if and only if there is $g\in\mathrm{SL}_{2}(\mathbb{Z})$ such that:
\begin{equation*}
    \frac{a\tau+b}{c\tau+d} = \tau',\quad \mathrm{where }\, \,g=\left(\begin{matrix}
    a & b\\ 
    c & d
    \end{matrix}\right).
\end{equation*}
This means that $j$ is invariant under the action of $\mathrm{SL}_{2}(\mathbb{Z})$. In fact, a lot more is known. The first thing is to note that $\mathrm{SL}_{2}(\mathbb{R})$ acts on $\mathbb{H}$ through M\"obius transformations, in fact $\mathrm{SL}_{2}(\mathbb{R})$ is the group of automorphisms of $\mathbb{H}$. Given $g\in \mathrm{GL}_{2}^{+}(\mathbb{Q})$ (the $+$ denotes positive determinant), there is a unique positive integer $N(g)\in\mathbb{N}$ such that $N(g)g\in\mathrm{GL}_{2}(\mathbb{Z})$ and the entries of $N(g)g$ are relatively prime. There exists a family of polynomials $\left\{\Phi_{N}\right\}_{N\geq 1}\subseteq\mathbb{Z}[X,Y]$, usually referred to as the \emph{modular polynomials}, that satisfy $\Phi_{N}(j(x), j(y))=0$ if and only if there is $g\in\mathrm{GL}_{2}^{+}(\mathbb{Q})$ such that $\det(N(g)g) = N$ and $x=gy$. The first modular polynomial is $\Phi_{1}(X,Y) = X-Y$, and $\Phi_{N}(X,Y)$ is symmetric for $N\geq 2$. For a precise construction of these polynomials, see e.g. section 6.1 of \cite{zagier}.

So the modular polynomials give us algebraic properties of the $j$-function. On the analytic side, $j$ satisfies the following differential equation of order $3$:
\begin{equation}
\label{eq:diffj}
    \frac{j'''}{j'} - \frac{3}{2}\left(\frac{j''}{j'}\right)^{2} + \left(\frac{j^{2}-1968j+2654208}{2j^{2}(j-1728)^{2}}\right)(j')^2 = 0.
\end{equation}
This can be proven using the definition of $j$ given in page 22 of \cite{zagier} plus Proposition 15 of \cite{zagier}. So, for purposes of transcendence of $j$ and its derivatives, it suffices to consider only $j$, $j'$ and $j''$. Thanks to the Modular Ax-Lindemann-Weierstrass (ALW) theorem below, we can also say that the only algebraic relations that the functions $j(z)$ and $j(gz)$ can satisfy are those given by the modular polynomials. On an algebraic function field $K$ of characteristic zero, $x_{1},\ldots,x_{n}\in K$ are said to be \emph{geodesically independent} if the $x_{i}$ are nonconstant and there are no relations of the form $x_{i} = gx_{k}$, where $i\neq k$ and $g\in\mathrm{GL}_{2}^{+}(\mathbb{Q})$. 

\begin{thm}[Modular ALW, theorem $1.1$ of \cite{pila}]
Let $\mathbb{C}(W)$ be an algebraic function field, where $W\subseteq\mathbb{C}^{n}$ is an irreducible algebraic variety. Suppose that $a_{1},\ldots,a_{n}\in\mathbb{C}(W)$ take values in $\mathbb{H}$ at some $P\in W$. If $a_{1},\ldots,a_{n}$ are geodesically independent, then the $3n$ functions:
\begin{equation*}
    j(a_{1}),\ldots,j(a_{n}),j'(a_{1}),\ldots,j'(a_{n}),j''(a_{1}),\ldots,j''(a_{n})
\end{equation*}
(considered as functions on $W$ locally near $P$) are algebraically independent over $\mathbb{C}(W)$. 
\end{thm}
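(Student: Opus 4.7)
The plan is to argue by contradiction, following the Pila--Wilkie counting strategy that has become standard for Ax--Lindemann problems on modular and Shimura varieties. Assume, for contradiction, that the $3n$ functions $j(a_{i}),\,j'(a_{i}),\,j''(a_{i})$ are algebraically dependent over $\mathbb{C}(W)$. Shrinking $W$ to a small complex-analytic neighbourhood of $P$, this gives a proper algebraic subvariety $V \subsetneq W \times \mathbb{C}^{3n}$, of dimension strictly less than $\dim W + 3n$, containing the image of the holomorphic map
\[
\Phi \colon w \longmapsto \bigl(w,\, j(a_{1}(w)),\ldots, j''(a_{n}(w))\bigr).
\]

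To bring o-minimality into play, fix a standard fundamental domain $\mathcal{F} \subset \mathbb{H}$ for $\mathrm{SL}_{2}(\mathbb{Z})$. The restrictions of $j, j', j''$ to $\mathcal{F}$ are definable in $\mathbb{R}_{\mathrm{an},\exp}$, and the $\mathrm{SL}_{2}(\mathbb{Z})$-invariance of $j$ gives access to infinitely many liftings of each $a_{i}(w)$ into $\mathcal{F}$. Form the definable set
\[
\Sigma = \bigl\{(w, g_{1}, \ldots, g_{n}) \in W \times \mathrm{GL}_{2}(\mathbb{R})^{n} : g_{i}\, a_{i}(w) \in \mathcal{F},\ \Phi(w) \in V\bigr\}.
\]
By construction $\Sigma$ contains the graph of the natural section $w \mapsto (\gamma_{1}(w),\ldots,\gamma_{n}(w))$ whose values lie in $\mathrm{SL}_{2}(\mathbb{Z})^{n}$; as $w$ ranges over a compact piece of $W$ these integer matrices have polynomially bounded heights, yielding many rational points of $\Sigma$.

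At this point I would invoke the Pila--Wilkie counting theorem: the abundance of rational points forces $\Sigma$ to contain a connected real semi-algebraic set of positive dimension. Transporting this semi-algebraic block back to $\mathbb{H}^{n}$ and using the order-three algebraic differential equation \eqref{eq:diffj} to control derivatives, one obtains a genuine complex algebraic correspondence between some $a_{i}$ and $a_{k}$. By the classification of algebraic relations among values of $j$ via the modular polynomials $\Phi_{N}$, such a correspondence must take the form $a_{i} = g a_{k}$ for some $g \in \mathrm{GL}_{2}^{+}(\mathbb{Q})$, contradicting geodesic independence.

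The main obstacle is the final step, i.e., passing from a positive-dimensional real semi-algebraic subset of $\Sigma$ to a genuine modular relation among the $a_{i}$. This requires the identification of the weakly special subvarieties of $Y(1)^{n}$, combining hyperbolic rigidity, monodromy properties of the local inverse of $j$, and the arithmetic of Hecke correspondences. This is the technical heart of Pila's argument in \cite{pila}, and I do not see a significantly shorter route to it.
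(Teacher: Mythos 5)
The paper does not actually prove this theorem: it is quoted verbatim as Theorem $1.1$ of \cite{pila} and then used as a known black box (indeed the paper never revisits it; the results in later sections go through the stronger Ax--Schanuel theorem of \cite{pila-tsimerman} instead). So there is no ``paper's own proof'' to compare against, and a reviewer in the shoes of this paper's author would simply cite \cite{pila} rather than reproduce an argument.

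That said, your sketch does reflect the actual strategy of Pila's proof: reduce to a graph inside $W\times\mathbb{C}^{3n}$, exploit the $\mathrm{SL}_{2}(\mathbb{Z})$-invariance of $j$ together with definability of $j$, $j'$, $j''$ on a fundamental domain to produce a definable set carrying polynomially many rational points, apply the Pila--Wilkie counting theorem to extract a positive-dimensional semi-algebraic block, and then upgrade this to a modular relation contradicting geodesic independence. You are also right that the serious content lives in the last step, the classification of positive-dimensional algebraic subvarieties of $\mathbb{H}^{n}$ whose $j$-image is contained in an algebraic set (weakly special subvarieties). Two small imprecisions worth flagging: your set $\Sigma$ mixes parameters $g_{i}\in\mathrm{GL}_{2}(\mathbb{R})$ with integer representatives $\gamma_{i}$, and to run Pila--Wilkie you need a definable set in a fixed number of real variables with a point count over $\mathbb{Q}$ (or $\mathbb{Z}$), so the bookkeeping of heights versus a compact piece of $W$ has to be set up more carefully; and incorporating $j'$ and $j''$ requires noting that the derivatives of the $\mathrm{SL}_{2}(\mathbb{Z})$-lifts transform by the usual automorphy factors, so the definable graph must record those factors explicitly. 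Neither is a conceptual gap, but both are places where ``control derivatives via the order-three ODE'' glosses over real work. For the purposes of this paper, though, none of this is needed: the theorem is an imported input, not something proved here.
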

The more general Ax-Schanuel theorem for $j$ is stated later (see Theorem \ref{thm:ax-schanuel-j}).

\section{$j$-Fields}

In this section we define $j$-fields. Given a field $K$, for any subfield $F$ of $K$, there is a natural action of $\mathrm{GL}_{2}(F)$ on $\mathbb{P}^{1}(K) = K\cup\left\{\infty\right\}$, given by:
\begin{equation*}
    gx = \frac{ax+b}{cx+d},
\end{equation*} where $g\in\mathrm{GL}_{2}(F)$ is represented by $g=\left(\begin{matrix}
a & b\\
c & d
\end{matrix}\right)$. Whenever we say that $\mathrm{GL}_{2}(F)$ acts on $K$, it will be in this manner. Throughout, let $G = \mathrm{GL}_{2}(\mathbb{Q})$. 

\begin{defi}
A \emph{$j$-field} is a two-sorted structue $\left<\mathbb{K}, \mathbb{D}, \alpha,j,j',j'',j'''\right>$, where:
\begin{itemize}
    \item $\mathbb{K} = \left<K, +, \cdot, 0, 1\right>$ is a field of characteristic zero,
    \item $\mathbb{D} = \left<D, \left(g\right)_{g\in G}\right>$ is a $G$-set (i.e. a set with an action of $G$),
\end{itemize} and $\alpha,j,j',j'',j''':D\rightarrow K$ are maps that satisfy:
\begin{enumerate}
\item $\alpha$ is injective.  
\item\label{ax:alpha} For every $z\in D$  and $g\in G$, $\alpha(gz) = g\alpha(z)$.  
\item\label{ax:diffj} For every $z\in D$, 
\begin{equation*}
    (j(z)\neq 0\wedge j(z)\neq1728 \wedge j'(z)\neq 0)\implies\daleth(j(z),j'(z),j''(z),j'''(z))=0,
\end{equation*}
where $\daleth\in\mathbb{Q}(X,Y,Z,W)$ is given by:
\begin{equation}
\label{eq:j}
\daleth(X,Y,Z,W):=\frac{W}{Y} - \frac{3}{2}\left(\frac{Z}{Y}\right)^{2} + \left(\frac{X^{2}-1968X+2654208}{2X^{2}(X-1728)^{2}}\right)Y^2.
\end{equation}
Thus $\daleth(j(z),j'(z),j''(z),j'''(z))=0$ corresponds to equation (\ref{eq:diffj}).
\item\label{ax:modpoly1} The axiom scheme: for every $z_{1},z_{2}\in D$, if $z_{1} = gz_{2}$, then $\Phi_{N}(j(z_{1}),j(z_{2}))=0$ (note that given $g$ we can obtain the value of $N$ as $N=\det(N(g)g)$, so this axiom scheme is first-order expressible). We also include here the expressions that can be obtained by deriving up to $3$-times modular relations. This means the following. Choosing $g$ and $N$ as before, we have that for every $z\in D$, $\Phi_{N}(j(z),j(gz))=0$. If we interpret this expression in $\mathbb{C}$ and derive it with respect to $z$, then we get: 
\begin{equation}
\label{eq:j'}
    \Phi_{N1}(j(z),j(gz))j'(z) + \Phi_{N2}(j(z),j(gz))j'(gz)\frac{ad-bc}{(cz+d)^{2}} = 0,
\end{equation}
where $\Phi_{N1}$ and $\Phi_{N2}$ are the derivatives of $\Phi_{N}(X,Y)$ with respect to the variables $X$ and $Y$ respectively. So, for each $g\in G$, we also include the axiom that says: if $z_{1}=gz_{2}$, then:
\begin{equation*}
    \Phi_{N1}(j(z_{1}),j(z_{2}))j'(z_{1}) + \Phi_{N2}(j(z_{1}),j(z_{2}))j'(z_{2})\frac{ad-bc}{(c\alpha(z_{1})+d)^{2}} =0. 
\end{equation*}
Deriving equation (\ref{eq:j'}) again with respect to $z$, we get another equation, this time involving $j''(z)$ and $j''(gz)$, that we restate as a first-order axiom as we just did with equation (\ref{eq:j'}). Deriving (\ref{eq:j'}) twice with respect to $z$, we get an equation involving $j'''(z)$ and $j'''(gz)$, which we likewise restate as an axiom. 
\item\label{ax:modpoly} The axiom scheme (one $\mathscr{L}_{\omega_{1},\omega}$-statement for each $N\in\mathbb{N}$): for all $z_{1},z_{2}\in D$, 
\begin{equation*}
    \Phi_{N}(j(z_{1}), j(z_{2})) = 0\implies \bigvee_{g\in G, \det(N(g)g)=N} (gz_{2} = z_{1}),
\end{equation*}
so this axiom is a converse to part of axiom \ref{ax:modpoly1}.
\item\label{ax:kernel} The $\mathscr{L}_{\omega_{1},\omega}$-statement: let $X\subseteq G$ be the set of non-scalar matrices. Then for every $z\in D$, 
\begin{equation*}
    (j(z)=0\vee j(z)=1728\vee j'(z)=0)\implies \bigvee_{g\in X}(gz=z). 
\end{equation*}
Note that the values chosen are the same as those used in axiom \ref{ax:diffj}, and correspond to the points $z$ where the expression $\daleth(j(z),j'(z),j''(z),j'''(z))$ is not defined.
\end{enumerate}
\end{defi}

The name ``$j$-field'' and part of this definition come from work of Jonathan Kirby and Adam Harris privately communicated to the author.

Note that from axiom \ref{ax:alpha} we get that $\alpha(D)\cap\mathbb{Q}=\emptyset$ because the image of $\alpha$ is contained in $K$ and for every element $x\in\mathbb{Q}$ there is $g\in G$ such that $gx=\infty$. Also, the axioms for $\alpha$ say that $D$ is embedded in $K$, and this allows us to identify $D$ with $\alpha(D)$. Therefore we will avoid mentioning $\alpha$ in the future. For the case of the modular $j$-function, we take $\alpha:\mathbb{H}^{+}\cup\mathbb{H}^{-}\rightarrow\mathbb{C}$ to be the inclusion. 


\begin{remark}
The set of axioms we have given does not define a complete theory, we have only given the axioms that will be needed for our transcendence results. The most natural way to define a complete set of axioms would be to take the full theory of the $j$-field $(\mathbb{C},\mathbb{H}^{+}\cup\mathbb{H}^{-},j,j',j'',j''')$. Call this theory $\mathrm{Th}_{j}$. From a model-theoretic perspective, we should only focus on models of $\mathrm{Th}_{j}$, and maybe the name ``$j$-fields'' should be reserved for these structures. Even if the theory $\mathrm{Th}_{j}$ is not the subject of this paper, we will point out a couple of things.
\begin{enumerate}[(a)]
\item If $(K,D)\models\mathrm{Th}_{j}$, then any point of $D$ which is fixed by some non-scalar $g\in G$ is $\emptyset$-definable (using terminology we will introduce shortly, this says that the special points are $\emptyset$-definable), and so their images under $j$ are also $\emptyset$-definable. As it turns out, these points are the only points $x$ such that both $x$ and $j(x)$ are algebraic over $\mathbb{Q}$ (this is a famous theorem of Schneider, see \cite{schneider}). Using some known values of $j$ one can prove that numbers such as $\sqrt{2}$, $\sqrt{3}$ and $\sqrt{5}$ are definable in $K$.  
\item Note that $\mathbb{C}\setminus(\mathbb{H}^{+}\cup\mathbb{H}^{-})$ defines the real closed field $\mathbb{R}$. So if $(K,D)\models\mathrm{Th}_{j}$, then $K\setminus D$ is a real closed field. Note that it is not obvious how to associate to a given real closed field a corresponding $j$ function on its algebraic closure. In \cite{peterzil-starchenko} it is proven that the analytic construction of $j$ shown in the Preliminaries using genus one tori need not work in general. Also, given that real closed fields are not categorical nor $\omega$-stable, it should not be expected that $j$-fields have these properties. 
\end{enumerate}
Because of this last point, $\mathrm{Th}_{j}$ is not a very nice theory to work with from a model-theory viewpoint. But our aim is to show that, even so, one can obtain interesting number-theoretic results. 
\end{remark}

\begin{defi}
Let $F$ be a subfield of $K$. We define $G^{F}$ to be the subgroup of $\mathrm{GL}_{2}(F)$ defined as $G^{F} := \left\{g\in\mathrm{GL}_{2}(F) : gD\subseteq D\right\}$. Note that $G^{\mathbb{Q}}=G$. We say that $F$ is an \emph{active subfield} of $K$ if $G^{F} = \mathrm{GL}_{2}(F)$. For example, in the case of $K=\mathbb{C}$, every subfield of $\mathbb{R}$ is an active subfield of $K$.

A \emph{morphism of $j$-fields} $\sigma:(K_{1},D_{1})\rightarrow(K_{2},D_{2})$ is a field morphism $\sigma:K_{1}\rightarrow K_{2}$ such that the map $\sigma\circ\alpha_{1}$ is contained in the image of $\alpha_{2}$ and  $\widetilde{\sigma}:=\alpha_{2}^{-1}\circ\sigma\circ\alpha_{1}:D_{1}\rightarrow D_{2}$ satisfies: for every $f\in\left\{j,j',j'',j'''\right\}$ and for every $z\in D$ we have $\sigma(f_{1}(z)) = f_{2}(\widetilde{\sigma}(z))$. Note that field morphisms respect the action of the group $G^{F}$ on $K$, so we clearly have $\widetilde{\sigma}(gz)=g\widetilde{\sigma}(z)$ for every $g\in G$ and $z\in D$. 

Let $(K,D)$ be a $j$-field and let $F$ be a subfield of $K$. We say that $F$ is a \emph{$j$-subfield} of $K$ if the pair $(F,D_{F})$, where $D_{F}=\alpha^{-1}(D\cap F)$, is a $j$-field with the functions of $(K,D)$ restricted to $(F,D_{F})$. Note that if $(K_{i})_{i\in I}$ is a family of $j$-subfields of $K$, then $\bigcap_{i\in I}K_{i}$ is a $j$-subfield of $K$.
\end{defi}

\section{Geodesic Closures}

In this section we study the pregeometry $\mathrm{gcl}_{F}$ which is defined by the action of $G^{F}$ on $K$, for every subfield $F$ of $K$.

\begin{defi}
Let $A$ be a subset of $K$ and $F$ a subfield of $K$. We define the \emph{$F$-geodesic closure of $A$}, denoted $\mathrm{gcl}_{F}(A)$, as the set of $x\in K$ such that there exist $a\in A$ and $g\in G^{F}$ such that $x = ga$ (which, save for the exclusion of the point at infinity, is the union of the $G^{F}$-orbits of points in $A$). When $F=\mathbb{Q}$ we will simply write $\mathrm{gcl}(A)$. 

It is straightforward to check that for every subfield $F$ of $K$ the operator $\mathrm{gcl}_{F}$ is a pregeometry of trivial type. So given $A,B\subseteq K$ let $\dim^{g}_{F}(A/B)$ be the dimension defined by the pregeometry $\mathrm{gcl}_{F}$, i.e. the number of distinct orbits of elements in $A$ that do not contain elements of $B$. If $B=\emptyset$ then we write simply $\dim^{g}_{F}(A)$. 

Let $E$ be a subfield of $K$. A point $x\in K$ is \emph{$E$-special} (or \emph{special over $E$}) if there is a non-scalar $g\in G^{E}$ such that $gx=x$ (when $E=\mathbb{Q}$ we will simply say that $x$ is \emph{special}). 
\end{defi}

It is also easy to see that if $E$ is an active subfield of $K$, then $\dim^{g}_{E}(E)=1$. Note that if $L\subseteq F\subseteq K$ are subfields of $K$, then for every $A\subseteq K$ we have that  $\dim_{F}^{g}(A)\leq\dim_{L}^{g}(A)$.

\subsection{Geodesic Disjointness}

The following definition of geodesic disjointness is analogous to the notion of linear disjointness (see e.g. Definition $3.1$ of \cite{bays-kirby-wilkie}). 

\begin{defi}
Let $E,F,L$ be subfields of $K$ such that $E\subseteq F\cap L$. We say that $F$ is \emph{geodesically disjoint from $L$ over $E$}, denoted $F\bot^{g}_{E}L$, if for every tuple $\overline{\ell}$ of elements of $L$ that is $\mathrm{gcl}_{E}$-independent is also $\mathrm{gcl}_{F}$-independent. Alternatively, $F\bot^{g}_{E}L$ if and only if for any tuple $\overline{\ell}$ from $L$, $\dim^{g}_{F}(\overline{\ell}) = \dim^{g}_{E}(\overline{\ell})$.
\end{defi}

\begin{lem}
\label{lem:disjoint1}
Let $E,L$ be subfields of $K$ such that $E\subseteq L$ and $E$ is active. Suppose that $\overline{x}$ is a finite tuple from $K$ which is algebraically independent over $L$ and that there are non-scalar elements in $G^{E(\overline{x})}\setminus G^{E}$. Then:
\begin{enumerate}[(a)]
    \item $E(\overline{x})\bot^{g}_{E}L$. 
    \item If $\ell\in L$ is special over $E(\overline{x})$, then it is special over $E$.
\end{enumerate}
\end{lem}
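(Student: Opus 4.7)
The plan is to exploit the algebraic independence of $\overline{x}$ over $L$ to decompose Möbius relations with coefficients in $E(\overline{x})$ into finitely many relations with coefficients in $E$. A preliminary observation used throughout is that $L\cap E(\overline{x}) = E$: given $y\in L\cap E(\overline{x})$, write $y = P(\overline{x})/Q(\overline{x})$ with $P,Q \in E[\overline{x}]$ and equate coefficients in $yQ(\overline{x}) = P(\overline{x})\in L[\overline{x}]$, using algebraic independence to get $y = P_{I}/Q_{I} \in E$ for any $I$ with $Q_{I}\neq 0$.

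For part (a), suppose $\ell_1, \ell_2 \in L$ and $g \in G^{E(\overline{x})}$ with $\ell_1 = g\ell_2$. After clearing denominators (which preserves the Möbius action), write $g = \sum_{I} M_{I} \overline{x}^{I}$ with $M_{I} = \left(\begin{smallmatrix} A_I & B_I \\ C_I & D_I \end{smallmatrix}\right) \in M_{2}(E)$. The relation $g\ell_2 = \ell_1$ translates into the polynomial identity
\begin{equation*}
    (a(\overline{x})\ell_2 + b(\overline{x})) - \ell_1(c(\overline{x})\ell_2 + d(\overline{x})) = 0
\end{equation*}
in $L[\overline{x}]$, and equating coefficients of $\overline{x}^{I}$ yields the vector identity $M_{I}(\ell_2, 1)^{T} = (C_{I}\ell_2 + D_{I})(\ell_1, 1)^{T}$. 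If some $M_{I}$ is invertible it lies in $\mathrm{GL}_{2}(E) = G^{E}$ and witnesses that $\ell_1,\ell_2$ are in the same $G^{E}$-orbit. Otherwise every $M_{I}$ is singular; pick any nonzero such $M_{I}$, whose kernel and image are $E$-lines in $L^{2}$. Since $(\ell_2, 1)^{T}$ either lies in $\ker M_{I}$ or is mapped to a nonzero multiple of $(\ell_1, 1)^{T}$ in the image, and since any vector with second coordinate $1$ lying on an $E$-line must have first coordinate in $E$, one of $\ell_1, \ell_2$ lies in $E$. Applying $g^{\pm 1}$ to that element and invoking $L\cap E(\overline{x})=E$ gives the other in $E$ too, after which a translation in $\mathrm{GL}_{2}(E)$ supplies the desired element of $G^{E}$ sending $\ell_2$ to $\ell_1$.

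For part (b), apply the same decomposition to a non-scalar $g \in G^{E(\overline{x})}$ with $g\ell=\ell$. Non-scalarity of $g$ forces some $M_{I}$ in the expansion to be non-scalar in $M_{2}(E)$, and every $M_{I}$ fixes $\ell$ as a Möbius action. If this $M_{I}$ is invertible, it is the desired non-scalar element of $G^{E}$ fixing $\ell$. If it is singular, the same rank-$1$ analysis forces $\ell \in E$, and every $\ell\in E$ is $E$-special, being fixed by the non-scalar matrix $\left(\begin{smallmatrix} 2 & -\ell \\ 0 & 1 \end{smallmatrix}\right)\in \mathrm{GL}_{2}(E) = G^{E}$.

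The main technical obstacle is the singular case in both parts: recognising that a singular $M_{I} \in M_{2}(E)$ still carries enough information, via its $E$-defined kernel and image, to force the $L$-element appearing in the Möbius relation to descend to $E$. Everything else reduces to the intersection identity $L\cap E(\overline{x}) = E$ and the transitive action of $\mathrm{GL}_{2}(E)$ on $\mathbb{P}^{1}(E)$.
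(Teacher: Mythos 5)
Your proposal is correct and follows essentially the same strategy as the paper: expand $g$ coefficient-wise over $E$ after clearing denominators, use algebraic independence of $\overline{x}$ over $L$ to extract per-coefficient M\"obius relations, and show each nonzero coefficient matrix is either invertible (supplying the $G^{E}$-witness) or singular (forcing $\ell_1,\ell_2\in E$ by a rank-one argument). Your version reorganizes the paper's upfront case split on $\ell_1,\ell_2\in E$ and its contradiction argument into a direct dichotomy on the coefficient matrices, with the kernel/image framing in place of the paper's row-dependence phrasing, and it additionally spells out $L\cap E(\overline{x})=E$ and the $E$-speciality of points of $E$, both of which the paper uses without proof.
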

\begin{proof}
 Let $\ell_{1},\ell_{2}\in L$ (not necessarily distinct) and let $g\in G^{E(\overline{x})}\setminus G^{E}$ be non-scalar. Suppose that $\ell_{1} = g\ell_{2}$. First observe that either $\ell_{1},\ell_{2}\in E$ or $\ell_{1},\ell_{2}\in L\setminus E$. To see this, suppose first that $\ell_{2}\in E$. Then, as $\ell_{1}=g\ell_{2}$ and $g\in G^{E(\overline{x})}$, we get that $\ell_{1}\in E(\overline{x})$. But as $E(\overline{x})\cap L= E$, then $\ell_{1}\in E$. Reversing the roles of $\ell_{1}$ and $\ell_{2}$, we also get that if $\ell_{2}\notin E$, then $\ell_{1}\notin E$. 
 
 If $\ell_{1},\ell_{2}\in E$, then, as $E$ is active, there is $h\in G^{E}$ such that $h\ell_{2}=\ell_{1}$, so we get that $\dim^{g}_{E}(\ell_{1},\ell_{2})=1$, proving (a). On the other hand, it is a simple exercise to see that, because $E$ is active, every point in $E$ is $E$-special, proving (b). 
 
So from now on we assume that $\ell_{1},\ell_{2}\in L\setminus E$. Note that we can take the entries of $g$ to be in $E[\overline{x}]$. Write:
    \begin{equation*}
        g=\left(\begin{matrix}
a(\overline{x}) & b(\overline{x})\\  
c(\overline{x}) & d(\overline{x})
\end{matrix}\right),
    \end{equation*}
    where $a(\overline{x}),b(\overline{x}),c(\overline{x}),d(\overline{x})\in E[\overline{x}]$, they are not all constant polynomials, and there is no common non-constant factor to all four polynomials. To set notation, write:
    \begin{equation*}
        \begin{matrix}
a(\overline{x}) =  \sum_{i}a_{i}\overline{x}^{i}, & b(\overline{x}) = \sum_{i}b_{i}\overline{x}^{i}, &  
c(\overline{x}) = \sum_{i}c_{i}\overline{x}^{i}, & d(\overline{x}) = \sum_{i}d_{i}\overline{x}^{i};
\end{matrix}
\end{equation*}
here we are using multi-index notation, so that $i$ is a tuple of non-negative integers. From the equation $\ell_{1} = g\ell_{2}$, we obtain:
\begin{equation*}
    \sum_{i}(c_{i}\ell_{1}\ell_{2}+d_{i}\ell_{1})\overline{x}^{i} = \sum_{i}(a_{i}\ell_{2}+b_{i})\overline{x}^{i}.
\end{equation*}
Given that $a_{i},b_{i},c_{i},d_{i}\in E\subseteq L$ and that $\overline{x}$ is algebraically independent over $L$, we deduce that the coefficient of $\overline{x}^{i}$ is zero for every $i$. So we obtain that for all $i$:
\begin{equation*}
    \ell_{1}(c_{i}\ell_{2}+d_{i}) = a_{i}\ell_{2}+b_{i}.
\end{equation*}
Let $g_{i}=\left(\begin{matrix}
a_{i} & b_{i}\\  
c_{i} & d_{i}
\end{matrix}\right)$. For some $i$ we must have $g_{i}\neq 0$, say this happens for $i=n$. If $\det(g_{n})=0$, this would mean that the rows of $g_{n}$ are linearly dependent over $E$. Suppose that $(c_{n},d_{n})\neq(0,0)$ (a similar argument can be made if instead we assume $(a_{n},b_{n})\neq(0,0)$). If $(c_{n}\ell_{2}+d_{n})=0$, then $\ell_{2}\in E$, which is a contradiction. So let us assume $(c_{n}\ell_{2}+d_{n})\neq 0$. As $\det(g_{n})= 0$, we may assume that the rows of $g_{n}$ are linearly dependent over $E$, i.e. there exists $\lambda_{n}\in E$ such that $a_{n}=\lambda_{n}c_{n}$ and $b_{n} = \lambda_{n}d_{n}$. But this would mean that $\ell_{1}\in E$, again arriving at a contradiction. 

So for all $i$ either $g_{i}=0$ or $\det(g_{i})\neq 0$. If every matrix $g_{i}$ is scalar, then so would be $g$. Therefore there is some $i$ for which $g_{i}$ is non-scalar, $\det(g_{i})\neq 0$ (so $g_{i}\in G^{E}$), and $g_{i}\ell_{2}  =\ell_{1}$. So $\ell_{1}$ and $\ell_{2}$ are $\mathrm{gcl}_{E}$-dependent, proving (a). Finally, if $\ell_{1}=\ell_{2}$, then $\ell_{1}$ is special over $E$, proving (b).
\end{proof}

\begin{lem}
\label{lem:disjoint2}
Let $E,F,L$ be subfields of $K$ such that $E\subseteq F\cap L$. Suppose $F\bot^{g}_{E}L$. Then for any tuple $\overline{x}$ from $K$ and any $A\subseteq L$, we have:
\begin{equation*}
    \dim^{g}_{F}(\overline{x}/L) - \dim^{g}_{E}(\overline{x}/L) \leq \dim^{g}_{F}(\overline{x}/A) - \dim^{g}_{E}(\overline{x}/A).
\end{equation*}
\end{lem}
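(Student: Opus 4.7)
The plan is to partition the entries of $\overline{x}$ according to their $G^F$-orbit, and to analyse, orbit by orbit, how the finer $G^E$-orbit structure refines things. The single crucial consequence of $F\bot^g_E L$ that I will use is the following: \emph{for any $G^F$-orbit $o$, the intersection $o\cap L$ lies entirely inside a single $G^E$-orbit}. Indeed, if $\ell_1,\ell_2\in L$ lie in the same $G^F$-orbit, then $(\ell_1,\ell_2)$ is $\mathrm{gcl}_F$-dependent, so by geodesic disjointness it is already $\mathrm{gcl}_E$-dependent. I will write $o_L$ for this $G^E$-orbit (when $o\cap L\neq\varnothing$).

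Next I rewrite both sides orbit-by-orbit. Writing $n_F(S):=\dim^g_F(\overline{x}/S)$ and $n_E(S):=\dim^g_E(\overline{x}/S)$, and noting that each $G^E$-orbit sits inside a unique $G^F$-orbit, one has
\begin{equation*}
    n_F(S)=\#\{o\in\mathrm{Orb}_F(\overline{x}) : o\cap S=\varnothing\},
\end{equation*}
\begin{equation*}
    n_E(S)=\sum_{o\in\mathrm{Orb}_F(\overline{x})}\#\{o'\in\mathrm{Orb}_E(\overline{x}\cap o) : o'\cap S=\varnothing\}.
\end{equation*}
The target inequality rearranges to $n_F(A)-n_F(L)\geq n_E(A)-n_E(L)$, so it suffices to compare the contribution of each $G^F$-orbit $o$ to the two differences.

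I then split the $G^F$-orbits $o$ of $\overline{x}$ into three cases: (i) $o\cap L=\varnothing$; (ii) $o\cap A\neq\varnothing$; (iii) $o\cap L\neq\varnothing$ but $o\cap A=\varnothing$. In (i), $o$ contributes $0$ to both sides, and every $G^E$-suborbit contributes the same to $n_E(A)$ and $n_E(L)$. In (ii), the key observation is that $o_L$ is the unique $G^E$-orbit inside $o$ that can meet either $A$ or $L$ (since $A\subseteq L$, $o\cap A\subseteq o\cap L\subseteq o_L$, and all other $G^E$-orbits of $o$ are disjoint from $L$, hence from $A$); so both differences pick up $0$ from $o$. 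In (iii), $o$ contributes $1$ to $n_F(A)-n_F(L)$, whereas on the $E$-side only the single $G^E$-orbit $o_L$ can change status between $A$ and $L$, contributing either $0$ or $1$ to $n_E(A)-n_E(L)$ according to whether $o_L$ is actually represented in $\overline{x}\cap o$. Summing over $o$ gives the inequality.

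The main obstacle is really just case (ii): one must justify that every $G^E$-orbit of $o$ other than $o_L$ avoids $L$ entirely, which is precisely where $F\bot^g_E L$ is invoked. Once that point is secured, the remaining work is bookkeeping; no further use of the hypothesis is needed.
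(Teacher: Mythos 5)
Your proof is correct, but it takes a genuinely different route from the paper's. The paper argues abstractly: it chooses a finite tuple $\overline{\ell}$ from $L$ (by finite character) so that $\dim^g_F(\overline{x}/\overline{\ell}A)=\dim^g_F(\overline{x}/L)$ and $\dim^g_E(\overline{x}/\overline{\ell}A)=\dim^g_E(\overline{x}/L)$, then applies the addition formula for pregeometries twice and uses two inputs: (1) $\dim^g_F(\overline{\ell}/A)=\dim^g_E(\overline{\ell}/A)$, which is the geodesic-disjointness hypothesis since $\overline{\ell}$ and $A$ lie in $L$; and (2) $\dim^g_F(\overline{\ell}/\overline{x}A)\leq\dim^g_E(\overline{\ell}/\overline{x}A)$, the general monotonicity of $\dim^g$ under enlarging the group. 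That argument never unpacks what the orbits look like and would transfer verbatim to any pair of comparable pregeometries enjoying those two properties. Your argument instead exploits the fact that $\mathrm{gcl}_F$ and $\mathrm{gcl}_E$ are \emph{trivial} pregeometries, so one can literally count $G^F$-orbits and their $G^E$-suborbits; the single use of $F\bot^g_E L$ is distilled into the clean structural fact that $o\cap L$ sits in one $G^E$-orbit for every $G^F$-orbit $o$, and the rest is a three-case tally over orbits. Your version is more concrete and makes transparent exactly where and how the disjointness hypothesis enters, at the price of being tied to the trivial-pregeometry setting; the paper's version is shorter and more portable to other pregeometries. One small polish worth adding for completeness: in case (iii) you should note explicitly that $o_L\subseteq o$ (which holds because $G^E\subseteq G^F$ so the $G^E$-orbit of a point of $o$ stays inside $o$), so that $o_L\cap A\subseteq o\cap A=\varnothing$ is legitimate; you implicitly use this containment but never state it.
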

\begin{proof}
Let $\overline{\ell}\in L$ be a tuple such that $\dim^{g}_{F}(\overline{x}/\overline{\ell}A) = \dim^{g}_{F}(\overline{x}/L)$ and $\dim^{g}_{E}(\overline{x}/\overline{\ell}A) = \dim^{g}_{E}(\overline{x}/L)$. The addition formula gives us two ways to calculate $\dim_{F}^{g}(\overline{x},\overline{\ell}/A)$:
\begin{eqnarray*}
\dim^{g}_{F}(\overline{x},\overline{\ell}/A) &=& \dim^{g}_{F}(\overline{x}/A) + \dim^{g}_{F}(\overline{\ell}/\overline{x}A)\\
&=& \dim^{g}_{F}(\overline{\ell}/A)+\dim^{g}_{F}(\overline{x}/\overline{\ell}A).
\end{eqnarray*}
Therefore:
\begin{eqnarray*}
\dim^{g}_{F}(\overline{x}/A) - \dim^{g}_{F}(\overline{x}/\overline{\ell}A) &=& \dim^{g}_{F}(\overline{\ell}/A) - \dim^{g}_{F}(\overline{\ell}/\overline{x}A)\\
&=& \dim^{g}_{E}(\overline{\ell}/A) - \dim^{g}_{F}(\overline{\ell}/\overline{x}A)\\
&\geq& \dim^{g}_{E}(\overline{\ell}/A) - \dim^{g}_{E}(\overline{\ell}/\overline{x}A)\\
&=& \dim^{g}_{E}(\overline{x}/A) - \dim^{g}_{E}(\overline{x}/\overline{\ell}A).
\end{eqnarray*}
\end{proof}

\section{$j$-Derivations}
\label{sec:jder}
In this section we introduce $j$-derivations and the pregeometry $j\mathrm{cl}$ they define. The exposition is analogous to section $4$ of \cite{kirby}.
\begin{defi}
Let $K$ be a field and $M$ a $K$-vector space. A map $\partial:K\rightarrow M$ is a called a \emph{derivation} if it satisfies for every $a,b\in K$:
\begin{enumerate}
\item $\partial(a+b) = \partial(a)+\partial(b)$.
\item $\partial(ab) = a\partial(b)+b\partial(a)$.
\suspend{enumerate}
Let $(K,D)$ be a $j$-field. The map $\partial:K\rightarrow M$ is called a \emph{$j$-derivation} if it is a derivation and it satisfies for every $z\in D$:
\resume{enumerate}
\item $\partial(j(z)) = j'(z)\partial(z)$,\quad $\partial(j'(z)) = j''(z)\partial(z)$,\quad $\partial(j''(z)) = j'''(z)\partial(z)$.
\end{enumerate}

For $C\subseteq K$, let $\mathrm{Der}(K/C, M)$ denote the set of derivations $\partial:K\rightarrow M$ such that $\partial(c) = 0$ for every $c\in C$. Let $j\mathrm{Der}(K/C, M)$ be the set of $j$-derivations $\partial:K\rightarrow M$ satisfying $\partial(c) = 0$ for every $c\in C$. For convenience we write $\mathrm{Der}(K/C):=\mathrm{Der}(K/C,K)$ and $j\mathrm{Der}(K/C) := j\mathrm{Der}(K/C,K)$. Note that all these spaces are $K$-vector spaces. 

Let $C$ be a subset of $K$. Define $\Omega(K/C)$ as the $K$-vector space generated by formal symbols of the form $dr$, where $r\in K$, quotiented by the relations given by the axioms of derivations plus that for every $c\in C$, $dc=0$. Denote by $d:K\rightarrow\Omega(K/C)$ the map $r\mapsto dr$. The map $d$ is called the \emph{universal derivation on $C$}. Let $\Xi(K/C)$ be obtained from $\Omega(K/C)$ by taking the quotient with the axioms for $j$-derivations. This induces a map $d_{j}:K\rightarrow \Xi(K/C)$ which is called the \emph{universal $j$-derivation}.
\end{defi}

\begin{prop}
Let $C\subseteq K$. For every $j$-derivation $\partial:K\rightarrow M$ which vanishes on $C$, there exists a unique $K$-linear map $\partial^{\ast}:\Xi(K/C)\rightarrow M$ such that $\partial^{\ast}\circ d_{j} = \partial$.
\end{prop}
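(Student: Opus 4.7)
The statement is a standard universal property for a quotient construction, so the plan is to bootstrap it from the universal property of $\Omega(K/C)$ and then check that the extra relations defining $\Xi(K/C)$ are respected.

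First, I would invoke the universal property of the module of differentials $\Omega(K/C)$: any derivation $\partial : K \to M$ vanishing on $C$ factors uniquely through a $K$-linear map $\varphi : \Omega(K/C) \to M$ with $\varphi \circ d = \partial$. This is the standard fact about K\"ahler-type differentials and follows directly from the definition of $\Omega(K/C)$ as the free $K$-vector space on symbols $dr$ modulo the Leibniz and additivity relations plus $dc = 0$ for $c \in C$. Since a $j$-derivation is in particular a derivation vanishing on $C$, such a $\varphi$ exists uniquely for our given $\partial$.

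Next, I would recall that by construction $\Xi(K/C)$ is the quotient $\Omega(K/C)/N$, where $N$ is the $K$-subspace generated by the elements
\begin{equation*}
dj(z) - j'(z)\, dz, \quad dj'(z) - j''(z)\, dz, \quad dj''(z) - j'''(z)\, dz,
\end{equation*}
as $z$ ranges over $D$, and $d_j : K \to \Xi(K/C)$ is the composition of $d$ with the quotient map. The key step is to verify that $\varphi$ vanishes on $N$. This is immediate from the $j$-derivation axioms: for each $z \in D$ one has $\partial(j(z)) = j'(z)\partial(z)$, hence
\begin{equation*}
\varphi\bigl(dj(z) - j'(z)\,dz\bigr) = \partial(j(z)) - j'(z)\partial(z) = 0,
\end{equation*}
and likewise for $j'$ and $j''$. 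Thus $\varphi$ descends to a $K$-linear map $\partial^{\ast} : \Xi(K/C) \to M$ satisfying $\partial^{\ast} \circ d_j = \partial$.

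For uniqueness, I would note that $\Xi(K/C)$ is generated as a $K$-vector space by the image of $d_j$ (since $\Omega(K/C)$ is generated by the image of $d$, and $d_j$ is the composition of $d$ with a surjection). Any $K$-linear map out of $\Xi(K/C)$ is therefore determined by its values on $d_j(K)$, and the condition $\partial^{\ast} \circ d_j = \partial$ pins these down. No step is really an obstacle here; the only subtlety is making sure the relations in the definition of $\Xi(K/C)$ are written exactly so that the $j$-derivation axioms translate into the required vanishing on $N$, which the definition above guarantees.
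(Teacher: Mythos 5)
Your proof is correct and takes essentially the same approach as the paper's (one-line) proof: the paper simply sets $\partial^{\ast}(d_{j}r)=\partial(r)$ and extends linearly, leaving well-definedness and uniqueness implicit, whereas you spell out exactly why this is well-defined by factoring through $\Omega(K/C)$ and checking the defining relations of $\Xi(K/C)$ are killed.
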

\begin{proof}
For every $r\in K$, set $\partial^{\ast}(d_{j}r) = \partial(r)$ and then extend linearly. 
\end{proof}

Clearly, a similar proposition exists characterising the universal property of $\Omega(K/C)$. These universal properties in fact show that $\mathrm{Der}(K/C)$ and $j\mathrm{Der}(K/C)$ are the dual spaces of $\Omega(K/C)$ and $\Xi(K/C)$ respectively.
\begin{defi}
Let $C\subseteq K$, and $a\in K$. We say that $a$ belongs to the $j$-closure of $C$, denoted $a\in j\mathrm{cl}(C)$, if for every $\partial\in j\mathrm{Der}(K/C)$ we have that $\partial(a)=0$. That is:
\begin{equation*}
    j\mathrm{cl}(C) = \bigcap_{\partial\in j\mathrm{Der}(K/C)}\ker\partial.
\end{equation*}
\end{defi}
In particular this means that for any $A\subseteq K$ and any $\partial\in j\mathrm{Der}(K/A)$ we have that $\ker\partial$ is $j\mathrm{cl}$-closed. For any $C\subseteq K$, $F = j\mathrm{cl}(C)$ defines a $j$-subfield of $K$. Indeed, as every $j$-derivation is an additive homomorphism, $j\mathrm{cl}(C)$ is a group under addition. The second axiom of $j$-derivations show that it is closed under multiplication. Finally, as $1$ is in the kernel of any $j$-derivation, then the multiplicative inverse of every nonzero element of $F$ is in $F$. So $F$ is a field. The other axioms of $j$-derivations show that we can restrict $j$ to $F$ and still obtain a $j$-field. For this, first define $D_{F} = D\cap F$. Thus we can define $\alpha_{F}:D_{F}\rightarrow F$ as the restriction of $\alpha$ to $D_{F}$. Let $z\in D_{F}$. Let $\partial\in j\mathrm{Der}(K/C)$, then as $\partial(j(z)) = j'(z)\partial(\alpha(z))$ we get that $\partial(j(z)) = 0$, and so $j(z)\in F$. Similarly $j'(z), j''(z)\in F$. The fact that $j'''(z)\in F$ comes from axiom \ref{ax:diffj} of $j$-fields, and the fact that $F$ is relatively algebraically closed in $K$. The final thing to check is that $D_{F}$ is a $G$-set. But this follows because for every $g\in G$, $\alpha_{F}(gz)$ is algebraic over $F$ (because $\alpha_{F}(z)\in F$), and so $\partial(\alpha_{F}(gz))=0$. Therefore $(F,D_{F})$ is a $j$-subfield of $K$.

\begin{remark}
\label{rem:evilone}
Note that $a\in j\mathrm{cl}(C)$ if and only if $d_{j}a=0$ in $\Xi(K/C)$. Clearly if $d_{j}a=0$, then $a\in j\mathrm{cl}(C)$. Conversely suppose that $d_{j}a\neq 0$. Assuming $\Xi(K/C)\neq0$, we can choose a linear transformation $\eta:\Xi(K/C)\rightarrow K$ such that $\eta(d_{j}a)\neq 0$. It is a straightforward exercise to show that $\eta\circ d_{j}\in j\mathrm{Der}(K/C)$. Therefore, there exists $\partial\in j\mathrm{Der}(K/C)$ such that $\partial(a)\neq 0$. From this we can conclude that $j\mathrm{Der}(K/C) = j\mathrm{Der}(K/j\mathrm{cl}(C))$. 
\end{remark}

Recall the following standard results about derivations. 

\begin{lem}[see Chapter VIII, section 5 of \cite{lang}]
Let $K$ be a field and $\partial:K\rightarrow K$ be a derivation.
\begin{enumerate}
    \item Let $C\subseteq K$ and let $a\in K$ be algebraic over $\mathbb{Q}(C)$. Then every derivation which vanishes on $C$, also vanishes on $a$.
    \item Let $X$ be transcendental over $K$ and let $a\in K(X)$. Then $\partial$ can be extended to a derivation $\partial':K(X)\rightarrow K(X)$ such that $\partial'(X)=a$. 
    \item If $L$ is a separable algebraic extension of $K$, then $\partial$ extends to a derivation on $L$. 
\end{enumerate}
\end{lem}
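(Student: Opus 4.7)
The plan is to verify each of the three standard facts in turn; all of them are classical and the paper only cites \cite{lang}, so the proposal is really to sketch the arguments one would use rather than grind through details.

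For (1), first observe that any derivation annihilates $\mathbb{Q}$: from $\partial(1) = \partial(1\cdot 1) = 2\partial(1)$ we get $\partial(1)=0$, then $\mathbb{Z}$-linearity and the quotient rule $\partial(1/n) = -\partial(n)/n^2 = 0$ extend this to all of $\mathbb{Q}$. So a derivation vanishing on $C$ vanishes on the subring $\mathbb{Q}[C]$, and by the quotient rule also on $\mathbb{Q}(C)$. Let $p(T)\in\mathbb{Q}(C)[T]$ be the minimal polynomial of $a$, and write $p(T) = \sum c_i T^i$. Applying $\partial$ to the identity $p(a)=0$ gives
\begin{equation*}
\sum \partial(c_i)\,a^i \;+\; p'(a)\,\partial(a) \;=\; 0.
\end{equation*}
The first sum vanishes because $\partial$ kills each $c_i$, and $p'(a)\neq 0$ because we are in characteristic zero (so $p$ is separable). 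Hence $\partial(a)=0$.

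For (2), the plan is to define $\partial'$ on the polynomial ring $K[X]$ by the formula
\begin{equation*}
\partial'\!\left(\sum b_i X^i\right) \;=\; \sum \partial(b_i)\,X^i \;+\; a\sum i\,b_i\,X^{i-1},
\end{equation*}
which manifestly restricts to $\partial$ on $K$ and sends $X$ to $a$. A direct check on monomials shows that $\partial'$ is additive and satisfies the Leibniz rule on $K[X]$. Then extend uniquely to $K(X)$ by $\partial'(f/g) := (\partial'(f)g - f\,\partial'(g))/g^2$; the fact that this is well-defined on equivalence classes and still satisfies the Leibniz rule is routine.

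For (3), the plan is to build the extension one simple extension at a time and then invoke Zorn's lemma on the poset of partial extensions. The key point is that for a separable algebraic extension $K(a)/K$ with minimal polynomial $p(T)=\sum c_i T^i\in K[T]$, one must set
\begin{equation*}
\partial(a) \;:=\; -\,\frac{p^{\partial}(a)}{p'(a)}, \qquad p^{\partial}(T) := \sum \partial(c_i)\,T^i,
\end{equation*}
which is forced by differentiating $p(a)=0$, and which makes sense precisely because separability gives $p'(a)\neq 0$. The only nontrivial verification is that this prescription is consistent: any polynomial annihilating $a$ is a $K$-multiple of $p$, and a short computation shows the formula is invariant under such multiplication. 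This is the main (and only real) obstacle in the three parts; once it is in place, the same recipe extends the derivation up a maximal tower, and Zorn's lemma takes one to all of $L$.
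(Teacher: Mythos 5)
Your sketches of all three parts are correct. Note that the paper itself supplies no proof here; it simply cites Lang (Chapter VIII, \S5), and your arguments are exactly the ones given there. In (1) the chain ``$\partial$ kills $\mathbb{Q}$, hence $\mathbb{Q}[C]$, hence $\mathbb{Q}(C)$'' and then the differentiation of the minimal polynomial with $p'(a)\neq 0$ in characteristic zero is the standard route. In (2) the explicit formula on $K[X]$ followed by the quotient-rule extension to $K(X)$ is correct, and in (3) the forced value $\partial(a) = -p^{\partial}(a)/p'(a)$ (legitimate because separability gives $p'(a)\neq 0$) together with the well-definedness check on $K[T]/(p)$ and Zorn's lemma for the general case is again the textbook argument. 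So this is not a different route from the paper's intended one — it is the cited reference's proof, reproduced accurately.
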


\begin{lem}[see Lemma $6.7$ of \cite{marker-messmer-pillay}]
Let $K_{1}\subseteq K_{2}$ be a field extension. Then $\dim_{K_{2}}\Omega(K_{2}/K_{1}) = \mathrm{t.d.}(K_{2}/K_{1})$. 
\end{lem}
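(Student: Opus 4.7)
The plan is to exhibit an explicit $K_{2}$-basis of $\Omega(K_{2}/K_{1})$ of the correct cardinality. Choose a transcendence basis $\mathcal{B} = \{t_{\alpha}\}_{\alpha \in I}$ of $K_{2}$ over $K_{1}$, so $|I| = \mathrm{t.d.}(K_{2}/K_{1})$ and $K_{2}$ is algebraic (hence separable algebraic, as the characteristic is zero) over the purely transcendental subfield $F := K_{1}(\mathcal{B})$. I would show that $\{dt_{\alpha}\}_{\alpha\in I}$ is a basis of $\Omega(K_{2}/K_{1})$; the identity $\dim_{K_{2}}\Omega(K_{2}/K_{1}) = \mathrm{t.d.}(K_{2}/K_{1})$ follows immediately.

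For the spanning property, any polynomial $p \in K_{1}[\mathcal{B}]$ satisfies $dp = \sum_{\alpha} (\partial p / \partial t_{\alpha})\, dt_{\alpha}$ directly from the defining relations of $\Omega(K_{2}/K_{1})$ (a finite sum, since $p$ involves only finitely many $t_{\alpha}$). Extending to $F$ is routine using the quotient rule $d(a/b) = (b\, da - a\, db)/b^{2}$, which is a formal consequence of the derivation axioms together with $d(1)=0$. Finally, given $x \in K_{2}$ with minimal polynomial $m(X) \in F[X]$, applying $d$ to the relation $m(x) = 0$ and using that $m'(x) \neq 0$ (by separability) expresses $dx$ as a $K_{2}$-linear combination of $dt_{\alpha}$'s.

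For linear independence, suppose $\sum_{\alpha} c_{\alpha}\, dt_{\alpha} = 0$ with only finitely many nonzero $c_{\alpha} \in K_{2}$. For each $\beta$ in the support I construct a derivation $\partial_{\beta} \in \mathrm{Der}(K_{2}/K_{1})$ with $\partial_{\beta}(t_{\alpha}) = \delta_{\alpha\beta}$: starting from the zero map on $K_{1}$, extend one variable at a time via item (2) of the preceding Lang-style lemma to the appropriate polynomial ring, then to $F$, and finally to $K_{2}$ via item (3) (separable algebraic extension, automatic here). The universal property of the differential $d$ then yields a $K_{2}$-linear map $\partial_{\beta}^{\ast} : \Omega(K_{2}/K_{1}) \to K_{2}$ with $\partial_{\beta}^{\ast}(dr) = \partial_{\beta}(r)$; applying this to the alleged relation gives $c_{\beta} = 0$. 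The only genuinely non-formal ingredient is the extension of derivations through the algebraic part of the extension, which the preceding Lang lemma delivers; everything else is bookkeeping, and the argument works uniformly whether $I$ is finite or infinite since each element of $K_{2}$ (and each purported dependence) involves only finitely many $t_{\alpha}$.
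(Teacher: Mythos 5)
The paper states this as a known fact with a citation to Marker--Messmer--Pillay and gives no proof of its own, so there is nothing internal to compare against; your argument is the standard one and it is correct. You choose a transcendence basis $\mathcal{B}$, show $\{dt_\alpha\}$ spans by pushing $d$ through polynomials, rational functions, and the (automatically separable, char $0$) algebraic step via the minimal polynomial, and establish independence by building the dual derivations $\partial_\beta$ through the very extension lemma the paper has just recalled from Lang and then invoking the universal property of $d$; the finite-character observation correctly handles an infinite transcendence basis.
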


\begin{lem}[see Theorem $3$ of \cite{ax}]
\label{lem:linearlem}
Let $K$ be a field and $\Delta$ a set of derivations on $K$. Let $x_{1},\ldots,x_{n}\in K$ and set $r = \mathrm{rk}(\partial x_{i})_{\partial\in\Delta,i=1,\ldots,n}$. Then there is a set of derivations $\widetilde{\Delta}$ and $\partial'_{1},\ldots,\partial'_{n}\in\widetilde{\Delta}$ such that $\partial'_{i}(x_{k}) = \delta_{ik}$, the Kronecker delta, for every $\partial\in\widetilde{\Delta}\setminus\left\{\partial'_{1},\ldots,\partial'_{n}\right\}$ we have that $\partial(x_{i})=0$ for $i=1,\ldots,n$, and the elements of $\widetilde{\Delta}$ are $K$-linear combinations of the elements of $\Delta$. Furthermore $\bigcap_{\partial\in\Delta}\ker\partial = \bigcap_{\partial\in\widetilde{\Delta}}\ker\partial$. In particular $\mathrm{rk}(\partial x_{i})_{\partial\in\widetilde{\Delta},i=1\ldots,n} = r$. 
\end{lem}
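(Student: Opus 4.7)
The strategy is Gaussian elimination inside the $K$-vector space $\mathrm{Der}(K)$, with respect to the evaluation functionals $\mathrm{ev}_{i}:\partial\mapsto\partial(x_{i})$. Let $V\subseteq\mathrm{Der}(K)$ denote the $K$-linear span of $\Delta$ and consider the joint evaluation $\mathrm{ev}\colon V\to K^{n}$ sending $\partial$ to $(\partial x_{1},\ldots,\partial x_{n})$. This map is $K$-linear and, by hypothesis, its image $\mathrm{ev}(V)$ is an $r$-dimensional $K$-subspace of $K^{n}$.

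After permuting the indices $1,\ldots,n$ (which affects none of the conclusions to be proved) I may arrange that the projection of $\mathrm{ev}(V)$ onto the first $r$ coordinates is a $K$-linear bijection onto $K^{r}$. A row-reduction then produces $\partial'_{1},\ldots,\partial'_{r}\in V$ satisfying $\partial'_{k}(x_{l})=\delta_{kl}$ for all $k,l\in\{1,\ldots,r\}$. For each $\partial\in\Delta$ I set $\widetilde{\partial}:=\partial-\sum_{k=1}^{r}\partial(x_{k})\,\partial'_{k}\in V$, so that $\widetilde{\partial}(x_{l})=0$ for $l\le r$ by construction, and define $\widetilde{\Delta}:=\{\partial'_{1},\ldots,\partial'_{r}\}\cup\{\widetilde{\partial}:\partial\in\Delta\}$.

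The one step that is more than bookkeeping is the claim that $\widetilde{\partial}$ actually annihilates \emph{every} $x_{l}$ with $l\le n$, not just those with $l\le r$. This follows because $\widetilde{\partial}\in V$, so $\mathrm{ev}(\widetilde{\partial})\in\mathrm{ev}(V)$; the choice of permutation ensures that any vector of $\mathrm{ev}(V)$ is determined by its first $r$ coordinates, and the first $r$ coordinates of $\mathrm{ev}(\widetilde{\partial})$ vanish, forcing all coordinates to vanish. This is the main obstacle, in the sense that everything else reduces to direct verification.

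Once this observation is in hand the remaining items follow immediately: every element of $\widetilde{\Delta}$ is a $K$-linear combination of elements of $\Delta$ because all of them lie in $V$; conversely the identity $\partial=\widetilde{\partial}+\sum_{k}\partial(x_{k})\partial'_{k}$ expresses each $\partial\in\Delta$ as a $K$-linear combination of elements of $\widetilde{\Delta}$, yielding $\bigcap_{\partial\in\Delta}\ker\partial=\bigcap_{\partial\in\widetilde{\Delta}}\ker\partial$; and the evaluation matrix over $\widetilde{\Delta}$ has the $r\times r$ identity block at rows $\partial'_{1},\ldots,\partial'_{r}$ and columns $x_{1},\ldots,x_{r}$ while the rows indexed by the $\widetilde{\partial}$'s vanish identically, so its rank is exactly $r$.
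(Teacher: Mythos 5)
Your proof is correct and follows essentially the same Gaussian-elimination argument as the paper: pick an invertible $r\times r$ minor (after reindexing), invert it to produce $\partial'_1,\ldots,\partial'_r$ with $\partial'_i(x_k)=\delta_{ik}$, and subtract off the components of each $\partial\in\Delta$ along them to form $\widetilde{\partial}$. You are in fact a bit more careful than the paper in explicitly verifying that each $\widetilde{\partial}$ annihilates all of $x_1,\ldots,x_n$ rather than merely $x_1,\ldots,x_r$ (via the observation that vectors of $\mathrm{ev}(V)$ are determined by their first $r$ coordinates), a point the paper's proof leaves implicit.
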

\begin{proof}
Assume that $\partial_{1},\ldots,\partial_{r}\in\Delta$ and $x_{1},\ldots,x_{r}$ are such that: $\det(\partial_{i} x_{k})_{i,k=1,\ldots,r} \neq 0$. Let $A = (a_{ik}) = (\partial_{i}x_{k})_{i,k=1,\ldots,r}^{-1}$ and set $\partial'_{i} = \sum_{k=1}^{r}a_{ik}\partial_{k}\in\mathrm{Der}(F/C)$. In this way $\partial_{i}'(x_{k}) = \delta_{ik}$, the Kronecker delta. 
  
  For each $\partial\in\Delta$ there exist unique $b_{i}(\partial)\in F$ such that: $\partial(x_{k}) = \sum_{i=1}^{r}b_{i}(\partial)\partial_{i}(x_{k})$ for $k=1,\ldots,r$. Set $\widetilde{\partial} = \partial - \sum_{i=1}^{r}b_{i}(\partial)\partial_{i}$ and $\widetilde{\Delta} = \left\{\widetilde{\partial} : \partial\in\Delta\right\}\cup\left\{\partial'_{1},\ldots,\partial'_{r}\right\}$.
  
  Let $C = \bigcap_{\partial\in\Delta}\ker\partial$. Let $x\in K$ be such that $\partial(x)=0$ for every $\partial\in\widetilde{\Delta}$. In particular $\partial_{i}'(x)=0$, for $i=1,\ldots,r$. Given that $A$ is invertible, we get that $\partial_{i}(x)=0$ for $i=1,\ldots,r$. Therefore, for each $\partial\in\Delta$ we have that $0 = \widetilde{\partial}(x) = \partial(x) - \sum_{i=1}^{r}b_{i}(\partial)\partial_{i}(x) = \partial(x)$, which means that $x\in C$.  
  
  Conversely, it is immediate from definition that if $x\in C$, then $\partial(x)=0$ for every $\partial\in\widetilde{\Delta}$.
\end{proof}

\begin{defi}
Let $(K,D)$ be a $j$-field and let $\tau_{1},\ldots,\tau_{m}\in K$ be $j\mathrm{cl}$-independent. A \emph{corresponding system of $j$-derivations for $\overline{\tau}$} is a set of $j$-derivations $\partial_{1},\ldots,\partial_{m}\in j\mathrm{Der}(K)$ such that $\partial_{i}(\tau_{k}) = \delta_{ik}$, the Kronecker delta.
\end{defi}
So, what Lemma \ref{lem:linearlem} shows (if one restricts the proof so that it only uses $j$-derivations) is that for any $j\mathrm{cl}$-independent set, there is a corresponding system of $j$-derivations. Now we will focus on properties of $j$-derivations. The following Lemma is straightforward.

\begin{lem}[see Theorem 1.10 of \cite{wilkie}]
\label{lem:basic}
Given subsets $B,C\subseteq K$ we have that:
\begin{enumerate}[(a)]
\item $C\subseteq j\mathrm{cl}(C)$.
\item $B\subseteq C\implies j\mathrm{cl}(B)\subseteq j\mathrm{cl}(C)$.
\item $j\mathrm{cl}(j\mathrm{cl}(C)) = j\mathrm{cl}(C)$. 
\end{enumerate}
\end{lem}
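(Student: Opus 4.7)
The plan is to read off each of the three properties directly from the definition of $j\mathrm{cl}$ as an intersection of kernels, together with the observation in Remark \ref{rem:evilone} that derivations vanishing on $C$ are the same as derivations vanishing on $j\mathrm{cl}(C)$.

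For part (a), I would simply note that if $c\in C$ then, by definition of $j\mathrm{Der}(K/C)$, every $\partial\in j\mathrm{Der}(K/C)$ satisfies $\partial(c)=0$, so $c\in\bigcap_{\partial\in j\mathrm{Der}(K/C)}\ker\partial = j\mathrm{cl}(C)$.

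For part (b), the key observation is that enlarging the set of constants shrinks the space of admissible derivations: if $B\subseteq C$ then $j\mathrm{Der}(K/C)\subseteq j\mathrm{Der}(K/B)$, since any $j$-derivation vanishing on $C$ automatically vanishes on $B$. Taking intersections of kernels reverses the inclusion, giving $j\mathrm{cl}(B)\subseteq j\mathrm{cl}(C)$.

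For part (c), the inclusion $j\mathrm{cl}(C)\subseteq j\mathrm{cl}(j\mathrm{cl}(C))$ is immediate from (a) applied to the set $j\mathrm{cl}(C)$. The reverse inclusion is where the real content sits, but Remark \ref{rem:evilone} has already done the work: it establishes the equality $j\mathrm{Der}(K/C)=j\mathrm{Der}(K/j\mathrm{cl}(C))$. Using this, the two intersections defining $j\mathrm{cl}(C)$ and $j\mathrm{cl}(j\mathrm{cl}(C))$ range over the same family of derivations, hence coincide. So the only nontrivial ingredient is the remark, whose proof in turn requires that $\Xi(K/C)\neq 0$ when needed and the construction of a linear functional separating a nonzero $d_j a$ from $0$; but those have been handled already. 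I do not expect any serious obstacle: the lemma is a purely formal consequence of the universal-derivation setup.
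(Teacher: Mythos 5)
Your proof is correct. The paper itself gives no argument for Lemma~\ref{lem:basic} — it is dismissed as ``straightforward'' with a pointer to Theorem 1.10 of \cite{wilkie} — so there is no paper proof to compare against, but your three steps are exactly the natural ones: (a) is immediate from the definitions, (b) follows because enlarging the constant set shrinks $j\mathrm{Der}$ and hence enlarges the intersection of kernels, and (c) reduces to the relationship between $j\mathrm{Der}(K/C)$ and $j\mathrm{Der}(K/j\mathrm{cl}(C))$.

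One small observation on (c): you invoke the full equality $j\mathrm{Der}(K/C)=j\mathrm{Der}(K/j\mathrm{cl}(C))$ from Remark~\ref{rem:evilone} and then worry about the $\Xi(K/C)\neq 0$ and separating-functional machinery that underlies that remark. That is more than you need. The inclusion $j\mathrm{cl}(j\mathrm{cl}(C))\subseteq j\mathrm{cl}(C)$ only requires the tautological containment $j\mathrm{Der}(K/C)\subseteq j\mathrm{Der}(K/j\mathrm{cl}(C))$: any $\partial$ vanishing on $C$ has $j\mathrm{cl}(C)\subseteq\ker\partial$ by the very definition of $j\mathrm{cl}(C)$, so it vanishes on $j\mathrm{cl}(C)$. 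Intersecting kernels over the larger family $j\mathrm{Der}(K/j\mathrm{cl}(C))$ then gives a subset of $j\mathrm{cl}(C)$. (The opposite containment of derivation spaces is what (b) gives you via $C\subseteq j\mathrm{cl}(C)$, and is not needed here.) This keeps part (c) entirely formal, with no dependence on the universal-object construction or on $\Xi(K/C)$ being nonzero, and matches the spirit of the paper's remark that the lemma is straightforward.
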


\begin{lem}[Finite character of $j\mathrm{cl}$]
\label{lem:finchar}
Let $(K,D)$ be a $j$-field, $C\subseteq K$. If $a\in j\mathrm{cl}(C)$, then there is a finite set $C_{0}\subseteq C$ such that $a\in j\mathrm{cl}(C_{0})$. Furthermore, there is a finitely generated j-subfield $K_{0}$ of $K$ such that $C_{0}\subseteq K_{0}$ and $a\in j\mathrm{cl}_{K_{0}}(C_{0})$. 
\end{lem}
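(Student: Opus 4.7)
The plan is to exploit the duality between $j\mathrm{cl}$ and the universal $j$-derivation recorded in Remark \ref{rem:evilone}, which asserts that $a \in j\mathrm{cl}(C)$ if and only if $d_j a = 0$ in $\Xi(K/C)$. Combined with the elementary fact that equations in a vector space presented by generators and relations are always witnessed by a \emph{finite} linear combination of relations, finite character should drop out immediately.

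In detail, I would unwind $\Xi(K/C)$ as the free $K$-vector space on symbols $\{dr : r \in K\}$ modulo the $K$-subspace $U(K/C)$ generated by (i) the ordinary derivation axioms, (ii) the elements $dc$ for $c \in C$, and (iii) the $j$-relations $d(j(z)) - j'(z)\, dz$, $d(j'(z)) - j''(z)\, dz$, $d(j''(z)) - j'''(z)\, dz$ for $z \in D$. Assuming $a \in j\mathrm{cl}(C)$, Remark \ref{rem:evilone} places $da$ inside $U(K/C)$, so it admits a witness as a finite $K$-linear combination of these generators. Reading off the constants among $C$ that appear in this finite expression yields a finite subset $C_0 \subseteq C$ with $da \in U(K/C_0)$, whence $d_j a = 0$ in $\Xi(K/C_0)$ and hence $a \in j\mathrm{cl}(C_0)$.

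For the statement about the $j$-subfield $K_0$, I would inspect the same witnessing expression: it involves only finitely many scalars from $K$ and only finitely many points $z_1, \ldots, z_k \in D$. Collect these, together with $C_0 \cup \{a\}$ and the associated values $j^{(i)}(z_\ell)$, into a finite set $S$, and let $K_0$ be the intersection of all $j$-subfields of $K$ containing $S$. This is a $j$-subfield by the closure-under-intersections observation made right after the definition of a $j$-subfield, and it is finitely generated in the $j$-subfield sense. Since every element appearing in the finite witness already lies in $K_0$, the very same linear combination lives in $U(K_0/C_0)$, so $d_j a = 0$ in $\Xi(K_0/C_0)$ and therefore $a \in j\mathrm{cl}_{K_0}(C_0)$, as required.

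The argument is essentially bookkeeping, and I do not foresee a genuine obstacle: the content is that vector-space relations are finitary and that the $j$-subfield generated by a finite set is well-defined. The one point to be slightly careful about is that $K_0$ must be genuinely closed under the $G$-action on $D$ and under $j, j', j'', j'''$, so one cannot simply take the field generated by $S$; defining $K_0$ as an intersection of $j$-subfields, rather than attempting an explicit inductive generation, sidesteps this cleanly.
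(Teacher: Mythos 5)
Your proof is correct, and it takes a genuinely more elementary route than the paper. The paper's proof formalizes the relevant data as a first-order theory $T$ in a language of constant symbols for elements of $\Xi(K/\emptyset)$, observes that every model of $T$ satisfies $d_j a = 0$ (via the universal property of $\Xi(K/C)$), and invokes the Completeness Theorem to obtain a finite proof, from which $C_0$ and $K_0$ are harvested as the finitely many parameters appearing in that proof. You instead never leave linear algebra: you describe $\Xi(K/C)$ explicitly as a quotient $V/U(K/C)$ of a free $K$-vector space, translate $a \in j\mathrm{cl}(C)$ into $da \in U(K/C)$ via Remark~\ref{rem:evilone}, and read off $C_0$ and the generating set for $K_0$ directly from a finite linear combination of the generating relations that witnesses this membership. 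Both approaches bottom out in the same finitary phenomenon (a single element of a quotient is killed by finitely many relations); the paper's compactness argument is shorter to state once one is comfortable packaging the data as a theory, while yours avoids that detour and makes the bookkeeping explicit. You also correctly flag the one delicate point, namely that $K_0$ must be taken to be the $j$-subfield (not merely the subfield) generated by the finitely many parameters, so that the $j$-relations used in the witness remain available in $U(K_0/C_0)$; the paper handles the same point identically, by taking the $j$-subfield generated by the parameters appearing in $T_0$.
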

\begin{proof}
Let $\mathcal{L}$ be the language that consists purely of constant symbols, one for each element of $\Xi(K/\emptyset)$. Let $T$ be the $\mathcal{L}$-theory that says that these constant symbols satisfy the axioms of $K$-vector spaces, the axioms that say that $d_{j}$ is a $j$-derivations, and also $d_{j}c =0$ for each $c\in C$. Then $\Xi(K/C)\models T$ and $\Xi(K/C)\models d_{j}a = 0$ by Remark \ref{rem:evilone}. By the universal property of $\Xi(K/C)$, every other $j$-derivation $\partial:K\rightarrow M$ which vanishes over $C$ also satisfies $\partial(a)=0$. Also, every such $M$ is a model of $T$. Therefore $T\models d_{j}a=0$. By the Completeness Theorem $T\vdash d_{j}a=0$. There are only finitely many elements of $C$ used in the proof of $d_{j}a=0$. Let $T_{0}$ be the set of formulas used in the proof of $d_{j}a=0$ (thus, $T_{0}$ is finite). Let $C_{0}$ consist of the elements $c\in C$ such that the formula $d_{j}c=0\in T_{0}$. Let $K_{0}$ be the $j$-subfield of $K$ generated by all the $x\in K$ which appear in some axiom of $T_{0}$. Note that $C_{0}\subseteq K_{0}$. Then $d_{j}a=0$ in $\Xi(K_{0}/C_{0})$ and in $\Xi(K/C_{0})$. Therefore $a\in j\mathrm{cl}_{K_{0}}(C_{0})$ and $a\in j\mathrm{cl}_{K}(C_{0})$. 
\end{proof}

\begin{lem}[Exchange Principle for $j\mathrm{cl}$]
\label{lem:exchange}
Let $C\subseteq K$, $a,b\in K$. If $a\in j\mathrm{cl}(Cb)$ and $a\notin j\mathrm{cl}(C)$, then $b\in j\mathrm{cl}(Ca)$. 
\end{lem}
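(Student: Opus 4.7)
The plan is to argue by contradiction using the vector-space structure on $j\mathrm{Der}(K/C)$, exactly as one does for the usual exchange principle for derivations (cf. the analogous statement in \cite{kirby}). Suppose for contradiction that $a \in j\mathrm{cl}(Cb)$, $a \notin j\mathrm{cl}(C)$, but $b \notin j\mathrm{cl}(Ca)$. The three hypotheses translate via the definition of $j\mathrm{cl}$ into the existence of witnessing $j$-derivations, and the goal will be to combine them linearly to produce a contradiction.

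Unpacking: from $a \notin j\mathrm{cl}(C)$ we obtain some $\partial_2 \in j\mathrm{Der}(K/C)$ with $\partial_2(a) \neq 0$, and from $b \notin j\mathrm{cl}(Ca)$ we obtain some $\partial_1 \in j\mathrm{Der}(K/Ca)$ with $\partial_1(b) \neq 0$. Since $C \subseteq Ca$ we have $\partial_1 \in j\mathrm{Der}(K/C)$ as well, so both derivations live in the same $K$-vector space. I then form the $K$-linear combination
\begin{equation*}
    \partial_3 := \partial_2 - \frac{\partial_2(b)}{\partial_1(b)}\,\partial_1,
\end{equation*}
which is legitimate because $\partial_1(b) \neq 0$ and because $j\mathrm{Der}(K/C)$ is closed under $K$-linear combinations. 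By construction $\partial_3(b) = 0$, and since both $\partial_1$ and $\partial_2$ vanish on $C$, so does $\partial_3$; hence $\partial_3 \in j\mathrm{Der}(K/Cb)$. Finally, using $\partial_1(a) = 0$ (as $\partial_1$ vanishes on $Ca$), we compute $\partial_3(a) = \partial_2(a) \neq 0$, contradicting the assumption $a \in j\mathrm{cl}(Cb)$.

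There is essentially no obstacle here once one has the key fact that $j\mathrm{Der}(K/C)$ is a $K$-vector space (recorded right after the definition of $j$-derivation) and that $j$-derivations vanishing on a larger set are in particular $j$-derivations vanishing on any subset. The only thing one should double-check is that the scalar coefficient $\partial_2(b)/\partial_1(b) \in K$ is admissible, which it is precisely because we are working with $j\mathrm{Der}(K/C, K)$ rather than derivations into an arbitrary module. Thus the proof is a one-line linear-algebra manipulation in $j\mathrm{Der}(K/C)$, mirroring the argument for ordinary algebraic closure in Chapter VIII of \cite{lang} and for $\mathrm{ecl}$ in \cite{kirby}.
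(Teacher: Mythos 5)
Your proof is correct and is essentially the same linear-algebra argument the paper uses: both take a $j$-derivation vanishing on $Ca$ but not on $b$, subtract a suitable scalar multiple from a witness to $a\notin j\mathrm{cl}(C)$ to kill $b$, and derive the contradiction. The only cosmetic difference is that the paper phrases it contrapositively, letting $\partial'$ range over all of $j\mathrm{Der}(K/C)$ to conclude $a\in j\mathrm{cl}(C)$, while you fix one witnessing $\partial_2$ and contradict $a\in j\mathrm{cl}(Cb)$ directly.
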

\begin{proof}
Suppose $a\in j\mathrm{cl}(Cb)$ and $b\notin j\mathrm{cl}(Ca)$. Then there is $\partial\in j\mathrm{Der}(K/C)$ such that $\partial(a)=0$ and $\partial(b)=1$. Let $\partial'\in j\mathrm{Der}(K/C)$ and let $\partial'' = \partial' - (\partial'(b))\partial$. Then $\partial''(a) = \partial'(a) - (\partial'(b))\partial(a) = \partial'(a)$. Also $\partial''(b) = \partial'(b) - (\partial'(b))\partial(b)=0$. So $\partial''\in j\mathrm{Der}(K/Cb)$, which means that $\partial''(a)=0$, therefore $\partial'(a)=0$. Therefore $a\in j\mathrm{cl}(C)$. 
\end{proof}

Combining these lemmas we get that $j\mathrm{cl}$ defines a pregeometry on any $j$-field. 
\begin{defi}
$C\subseteq K$ is said to be $j\mathrm{cl}$-\emph{closed} if $C = j\mathrm{cl}(C)$. $C$ is said to be \emph{$j$-independent} if it is an independent set with respect to $j\mathrm{cl}$. In particular, if $C$ is $j\mathrm{cl}$-independent and consists of only one element $C=\left\{\tau\right\}$, then we will say $\tau$ is \emph{$j$-generic}. Furthermore, for $A,C\subseteq K$, let $\dim^{j}(A/C)$ denote the dimension of $A$ over $C$ with respect to $j\mathrm{cl}$. 
\end{defi}
The next proposition should enlighten the reader as to the name chosen for the operator $j\mathrm{cl}$. 

\begin{prop}
\label{prop:jcl}
Let $C\subseteq K$ be $j\mathrm{cl}$-closed, and let $z\in D$.
\begin{enumerate}[(a)]
    \item $z\in C\implies\left\{j(z),j'(z),j''(z)\right\}\subseteq C$
    \item Suppose $j'''(z)\neq 0$. If $j^{(t)}(z)\in C$ for some $t\in\left\{0,1,2\right\}$, then $z\in C$.  
\end{enumerate}
\end{prop}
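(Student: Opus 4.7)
The plan for part (a) is to unwind the definition of $j\mathrm{cl}$ directly. Fix $z \in C$ and any $\partial \in j\mathrm{Der}(K/C)$. Since $z \in C$, we have $\partial(z)=0$, so the third axiom of $j$-derivations gives $\partial(j(z)) = j'(z)\partial(z) = 0$, and likewise $\partial(j'(z)) = j''(z)\partial(z) = 0$ and $\partial(j''(z)) = j'''(z)\partial(z) = 0$. As $\partial$ was arbitrary, $j(z), j'(z), j''(z) \in \bigcap_{\partial \in j\mathrm{Der}(K/C)} \ker\partial = j\mathrm{cl}(C) = C$.

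For part (b) I would proceed by cascading from $t=2$ down to $t=0$. When $t = 2$, for any $\partial \in j\mathrm{Der}(K/C)$ the assumption $j''(z) \in C$ gives $0 = \partial(j''(z)) = j'''(z)\partial(z)$, and the hypothesis $j'''(z)\neq 0$ forces $\partial(z)=0$, so $z \in j\mathrm{cl}(C) = C$. When $t=1$, the same computation applied to $j'(z)\in C$ gives $j''(z)\partial(z) = 0$; if $j''(z)\neq 0$ we conclude directly, and otherwise $j''(z) = 0 \in \mathbb{Q} \subseteq C$ trivially, reducing us to the previous case.

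The delicate branch is $t=0$. From $j(z) \in C$ one obtains $j'(z)\partial(z) = 0$. If $j'(z) \neq 0$ we are again done, but if $j'(z) = 0$ neither the derivation identity nor a reduction to case $t=1$ is available. Here I would invoke axiom \ref{ax:kernel}: the vanishing of $j'(z)$ forces $z$ to be fixed by some non-scalar $g = \left(\begin{smallmatrix} a & b\\ c & d\end{smallmatrix}\right) \in G$, so the equation $gz = z$ becomes $cz^2 + (d-a)z - b = 0$, a nontrivial polynomial over $\mathbb{Q}$ (nontriviality is precisely the non-scalar condition on $g$). Thus $z$ is algebraic over $\mathbb{Q}$. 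Because every $j$-derivation kills $\mathbb{Q}$, we have $\mathbb{Q} \subseteq j\mathrm{cl}(\emptyset) \subseteq C$, and the classical extension lemma for derivations on algebraic extensions (part (1) of the Lang lemma cited earlier) gives $\partial(z)=0$ for every $\partial \in j\mathrm{Der}(K/C)$, whence $z \in C$.

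I expect the main obstacle to be exactly this degenerate branch where $j'(z) = 0$ in case $t=0$: the $j$-derivation axioms alone do not force $\partial(z) = 0$, and one really needs to combine the special-point axiom \ref{ax:kernel} with the classical fact that derivations vanishing on a field also vanish on its algebraic extension. The other cases are routine applications of $j$-derivation axiom (3) once one notices that the hypothesis $j'''(z) \neq 0$ is tailor-made to make the $t=2$ reduction work.
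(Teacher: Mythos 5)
Your part (a) and the $t=2$, $t=1$ branches of (b) are exactly the paper's argument. The issue is your handling of $t=0$: you claim that when $j'(z)=0$ ``neither the derivation identity nor a reduction to case $t=1$ is available,'' but the reduction \emph{is} available, by the very move you used to go from $t=1$ to $t=2$. If $j'(z)=0$ then $j'(z)=0\in\mathbb{Q}\subseteq C$, so $j'(z)\in C$, and the already-established $t=1$ case (which only needs $j'(z)\in C$ and $j'''(z)\neq 0$) applies directly to give $z\in C$. This is precisely what the paper does: $j(z)\in C$ gives $j'(z)\partial(z)=0$; if $\partial(z)\neq 0$ then $j'(z)=0\in C$, contradicting case $t=1$. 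So the cascading down through $t=2,1,0$ closes without needing any extra input.

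That said, the detour you take is not wrong. Invoking axiom \ref{ax:kernel} to conclude that $z$ is fixed by a non-scalar $g\in G$, hence satisfies a non-trivial quadratic over $\mathbb{Q}$ and is algebraic over $\mathbb{Q}\subseteq C$, at which point part (1) of the Lang lemma forces $\partial(z)=0$, is a valid alternative argument. It is simply unnecessary: you imported the special-point axiom and the algebraicity of $\mathrm{SL}_2$-fixed points to handle a subcase that the uniform cascade already covers. The paper's proof is the leaner one and you should recognize that your $t=1$ reduction trick works verbatim at $t=0$.
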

\begin{proof}
\begin{enumerate}[(a)]
    \item Let $\partial\in j\mathrm{Der}(K/C)$. If $\partial(z) = 0$, then  by axioms of $j$-derivations we get $\partial(j^{(t)}(z)) = j^{(t+1)}(z)\partial(z) = 0$, for $t\in\left\{0,1,2\right\}$. As $\partial$ was arbitrary in $j\mathrm{Der}(K/C)$, we are done.
    \item Let $\partial\in j\mathrm{Der}(K/C)$. Suppose first that $j''(z)\in C$. Then as $0 = \partial(j''(z))=j'''(z)\partial(z)$, we conclude $\partial(z)=0$. Assume now that $j'(z)\in C$. Then $0 = \partial(j'(z)) = j''(z)\partial(z)$. If $\partial(z)\neq 0$, then $j''(z)=0\in C$, but then we are in the previous case and we get a contradiction. So $\partial(z)=0$. Finally assume that $j(z)\in C$. Then $0=\partial(j(z)) = j'(z)\partial(z)$. Reasoning as before, if $\partial(z)\neq 0$, then $j'(z)=0\in C$, and so we are in the previous case, again arriving at a contradiction. Therefore, in all cases we arrive at $\partial(z)=0$.  As $\partial$ was arbitrary in $j\mathrm{Der}(K/C)$, we are done.
\end{enumerate}
\end{proof}

\begin{remark}
The special points over $\mathbb{Q}$ are in $j\mathrm{cl}(\emptyset)$. This is because the equation $gx=x$ is either a linear or quadratic equation over $\mathbb{Q}$, so special points are of degree $2$ over $\mathbb{Q}$. 
\end{remark}

\begin{prop}
\label{prop:dim}
Let $z_{1},\ldots,z_{n}\in D$ be such that $z_{1},\ldots,z_{n}$ are $j\mathrm{cl}$-independent over $C\subseteq K$. Then we have that $\dim^{g}(z_{1},\ldots,z_{n}/C)= n$. 
\end{prop}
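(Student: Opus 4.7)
The plan is to prove the contrapositive: if $\dim^g(z_1,\ldots,z_n/C) < n$, then the tuple $(z_1,\ldots,z_n)$ fails to be $j\mathrm{cl}$-independent over $C$. Recall that $\dim^g$ is defined using the pregeometry $\mathrm{gcl} = \mathrm{gcl}_{\mathbb{Q}}$, whose closure operator sends a set to the union of its $G$-orbits. Thus $\dim^g(z_1,\ldots,z_n/C) < n$ means that one of the following holds: either (i) there exist distinct indices $i,k$ and $g\in G$ with $z_i = gz_k$, or (ii) there exist $i$, $c\in C$, and $g\in G$ with $z_i = gc$.

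The key observation is that, because $G = \mathrm{GL}_2(\mathbb{Q})$ has rational entries, the Möbius action $x \mapsto gx = (ax+b)/(cx+d)$ is a rational function of $x$ with coefficients in $\mathbb{Q}$. Hence in case (i), $z_i \in \mathbb{Q}(z_k)$, and in case (ii), $z_i \in \mathbb{Q}(c)$. Now the paper has already established that $j\mathrm{cl}(S)$ is a $j$-subfield of $K$ for any $S \subseteq K$, so in particular it is a field containing $\mathbb{Q}$ and $S$. Applied to $S = C \cup \{z_1,\ldots,z_n\}\setminus\{z_i\}$, this field contains $z_k$ (in case (i)) or $c$ (in case (ii)), and therefore contains the subfield $\mathbb{Q}(z_k)$, resp.\ $\mathbb{Q}(c)$, and hence contains $z_i$.

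In either case we obtain $z_i \in j\mathrm{cl}\bigl(C \cup \{z_1,\ldots,z_n\}\setminus\{z_i\}\bigr)$, which directly contradicts the assumed $j\mathrm{cl}$-independence of $(z_1,\ldots,z_n)$ over $C$. Since both bad cases lead to contradictions, we conclude $\dim^g(z_1,\ldots,z_n/C) = n$.

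I do not expect a genuine obstacle here: the proof is essentially bookkeeping, and the only substantive input is the already-verified fact that $j\mathrm{cl}$-closures are closed under field operations (so they contain $\mathbb{Q}(x)$ whenever they contain $x$). The only subtlety is to remember that $\mathrm{gcl}$ uses $G^{\mathbb{Q}} = G$ rather than a larger $G^F$, which is exactly what makes the Möbius expression land in $\mathbb{Q}(z_k)$ rather than in a larger field requiring a more delicate argument.
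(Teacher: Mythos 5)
Your proof is correct and is essentially the same as the paper's: both reduce to the observation that if $z_i = gx$ with $g\in G=\mathrm{GL}_2(\mathbb{Q})$ and $x$ in the remaining set, then $z_i\in j\mathrm{cl}(x)$, contradicting $j\mathrm{cl}$-independence. The paper phrases this step as ``every $j$-derivation vanishing on $x$ vanishes on $z_i$'' (a rational function of $x$ with rational coefficients), whereas you deduce it from the already-established fact that $j\mathrm{cl}$-closures are subfields containing $\mathbb{Q}$; these are the same observation.
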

\begin{proof}
Clearly $\dim^{g}(z_{1},\ldots,z_{n}/C)\leq n$. Suppose that $\dim^{g}(z_{1},\ldots,z_{n}/C)< n$. Then, without loss of generality, we may assume that $z_{1}\in \mathrm{gcl}(C,z_{2},\ldots,z_{n})$. This means that there is $x\in C\cup\left\{z_{2},\ldots,z_{n}\right\}$ and $g\in G$ such that $z_{1} = gx$. Every $j$-derivation which vanishes on $x$ vanishes also on $z_{1}$. This means that $z_{1}\in j\mathrm{cl}(x)$, which contradicts that $z_{1},\ldots,z_{n}$ are $j\mathrm{cl}$-independent over $C$. 
\end{proof}

\begin{lem}
Let $(K,D)$ be a $j$-field and $C\subseteq K$. Then $\dim^{j}(K/C) \leq \dim_{K}\Xi(K/C)$. 
\end{lem}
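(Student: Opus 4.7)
The plan is to prove that if $\tau_{1},\ldots,\tau_{m}\in K$ are $j\mathrm{cl}$-independent over $C$, then the elements $d_{j}\tau_{1},\ldots,d_{j}\tau_{m}$ are $K$-linearly independent in $\Xi(K/C)$. Once this is established, taking the supremum over such $m$ immediately gives $\dim^{j}(K/C)\leq\dim_{K}\Xi(K/C)$ (the case $\dim_{K}\Xi(K/C)=\infty$ being trivial, and in the finite case every $j\mathrm{cl}$-independent set is then bounded in size by this dimension).

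To prove linear independence, I would argue by contradiction. Suppose there is a nontrivial relation $\sum_{i=1}^{m}a_{i}d_{j}\tau_{i}=0$ in $\Xi(K/C)$, with $a_{k}\neq 0$ for some $k$. I claim this forces $\tau_{k}\in j\mathrm{cl}(C\cup\{\tau_{i}:i\neq k\})$, contradicting the assumed independence. Indeed, let $\partial\in j\mathrm{Der}(K/C\cup\{\tau_{i}:i\neq k\})$ be arbitrary. Since $\partial$ vanishes on $C$, the universal property of $\Xi(K/C)$ (established in the proposition above) provides a unique $K$-linear map $\partial^{\ast}:\Xi(K/C)\rightarrow K$ with $\partial^{\ast}\circ d_{j}=\partial$. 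Applying $\partial^{\ast}$ to the dependence relation yields
\begin{equation*}
    0=\sum_{i=1}^{m}a_{i}\partial(\tau_{i})=a_{k}\partial(\tau_{k}),
\end{equation*}
since $\partial(\tau_{i})=0$ for $i\neq k$. Because $a_{k}\neq 0$, we conclude $\partial(\tau_{k})=0$, and as $\partial$ was arbitrary in $j\mathrm{Der}(K/C\cup\{\tau_{i}:i\neq k\})$, we obtain $\tau_{k}\in j\mathrm{cl}(C\cup\{\tau_{i}:i\neq k\})$, the desired contradiction.

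There is no real obstacle here; the proof is essentially just the universal property of $\Xi(K/C)$ dualized through the definition of $j\mathrm{cl}$ as the common kernel of all $j$-derivations vanishing on $C$. The only point one has to be slightly careful about is the distinction between derivations vanishing on $C$ and derivations vanishing on the enlarged set $C\cup\{\tau_{i}:i\neq k\}$: the universal property is stated with respect to $C$, but every $j$-derivation vanishing on the larger set still vanishes on $C$ and hence factors through $\Xi(K/C)$, which is exactly what the argument requires.
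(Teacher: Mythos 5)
Your proof is correct and takes essentially the same approach as the paper: both arguments hinge on the universal property of $\Xi(K/C)$ to convert $j$-derivations into $K$-linear functionals on $\Xi(K/C)$, and then use $j\mathrm{cl}$-independence of $\overline{\tau}$ to rule out any nontrivial $K$-linear relation among the $d_{j}\tau_{i}$. The only difference is presentational: the paper first builds a corresponding system of $j$-derivations $\partial_{i}$ with $\partial_{i}(\tau_{k})=\delta_{ik}$ and applies the induced maps $\chi_{k}$ to extract $a_{k}=0$, while you argue contrapositively with an arbitrary $\partial\in j\mathrm{Der}(K/C\cup\{\tau_{i}:i\neq k\})$, which avoids having to exhibit the dual system explicitly but is the same underlying mechanism.
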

\begin{proof}
  Suppose $t_{1},\ldots,t_{n}$ are $j\mathrm{cl}$-independent over $C$. Then, as in Lemma \ref{lem:linearlem}, there are $j$-derivations $\partial_{i}:K\rightarrow K$ such that $\partial_{i}(t_{k}) = \delta_{ik}$, the Kronecker delta. Consider the universal $j$-derivation $d_{j}:K\rightarrow\Xi(K/C)$ and for each $i$ choose $\chi_{i}:\Xi(K/C)\rightarrow K$ such that $\partial_{i} = \chi_{i}\circ d_{j}$. Suppose that $a_{1},\ldots,a_{n}\in K$ satisfy:
  \begin{equation*}
      \sum_{i=1}^{n}a_{i}d_{j}t_{i} = 0.
  \end{equation*}
  Then, applying $\chi_{k}$ to this equation we get $\sum a_{i}\partial_{k}t_{i}=0$, that is to say $a_{k}=0$. Therefore $d_{j}t_{1},\ldots,d_{j}t_{n}$ are $K$-linearly independent. 
\end{proof}

To summarise the results about dimensions of universal derivations, we write:
\begin{equation*}
    \dim^{j}(K/C)\leq\dim_{K}\Xi(K/C) = \dim_{K}\Xi(K/j\mathrm{cl}(C))\leq\dim_{K}\Omega(K/j\mathrm{cl}(C))=\mathrm{t.d.}(K/j\mathrm{cl}(C)).
\end{equation*}

\subsection{$j$-derivations on $\mathbb{C}$}

In this section we prove that there are non-trivial $j$-derivations on $\mathbb{C}$. It is well known that $j$ has a standard fundamental domain: 
\begin{equation*}
    \mathcal{F} := \left\{z=x+iy\in\mathbb{H}^{+} : -\frac{1}{2}\leq x\leq 0, |z|\geq 1\right\}\cup\left\{z=x+iy\in\mathbb{H}^{+} : 0< x <\frac{1}{2}, |z|> 1\right\}. 
\end{equation*}
For every $z\in\mathbb{H}^{+}$ there exists $g\in\mathrm{SL}_{2}(\mathbb{Z})$ such that $gz\in\mathcal{F}$. Any set of the form $g\mathcal{F}$, with $g\in\mathrm{SL}_{2}(\mathbb{Z})$ is called a \emph{fundamental domain of $j$}. Now note that the differential equation (\ref{eq:diffj}) of $j$ is defined when $j'(z)\neq 0$ and $j(z)(j(z)-1728)\neq 0$. If we restrict $z$ to $\mathcal{F}$, then it is well known that:
\begin{itemize}
    \item if $j'(z)=0$, then $z=i$ or $z=\rho$, where $\rho = e^{2\pi i/3}$,
    \item if $j(z)=0$, then $z=\rho$,
    \item if $j(z)=1728$, then $z=i$. 
\end{itemize}

\begin{prop}
Let $\partial$ be a $j$-derivation on $\mathbb{C}$. Let $z\in\mathbb{H}^{+}\cup\mathbb{H}^{-}$ not be in the $\mathrm{SL}_{2}(\mathbb{Z})$-orbits of $i$, $-i$, $\rho$ and $\overline{\rho}$. Then for every $n\in\mathbb{N}$ we get that:
\begin{equation*}
    \partial(j^{(n)}(z)) = j^{(n+1)}(z)\partial(z).
\end{equation*}
\end{prop}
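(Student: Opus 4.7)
The plan is a straightforward induction on $n$. The base cases $n = 0, 1, 2$ are exactly the three equations in the defining axiom of a $j$-derivation, so there is nothing to do in that range.

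For $n \geq 3$, the idea is to leverage axiom \ref{ax:diffj}. First I would observe that the hypothesis that $z$ is not in the $\mathrm{SL}_{2}(\mathbb{Z})$-orbits of $i, -i, \rho, \overline{\rho}$ is exactly what ensures $j'(z) \neq 0$ and $j(z)(j(z) - 1728) \neq 0$: the list of values of $j$ and $j'$ at the exceptional points of the fundamental domain $\mathcal{F}$ was just recalled, $\mathrm{SL}_{2}(\mathbb{Z})$-invariance of $j$ transports this through $\mathbb{H}^{+}$, and complex conjugation (together with the definition of $j$ on $\mathbb{H}^{-}$) extends it to $\mathbb{H}^{-}$. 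Under these nonvanishing conditions, axiom \ref{ax:diffj} can be solved for $j'''(z)$ as a rational function
\begin{equation*}
j'''(z) = R(j(z), j'(z), j''(z)),
\end{equation*}
where $R \in \mathbb{Q}(X, Y, Z)$ has denominator a product of powers of $Y$, $X$ and $X - 1728$. Differentiating this identity in the complex variable $z$ and substituting recursively, one obtains for each $n \geq 3$ a rational function $R_{n} \in \mathbb{Q}(X, Y, Z)$, of the same type, such that $j^{(n)}(z) = R_{n}(j(z), j'(z), j''(z))$.

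The inductive step then reduces to a chain-rule computation. Applying $\partial$ (which is a derivation of $K$, so respects any rational function in elements whose denominator is invertible) to the identity $j^{(n)}(z) = R_{n}(j(z), j'(z), j''(z))$ and invoking the base cases $n = 0, 1, 2$ yields
\begin{equation*}
\partial(j^{(n)}(z)) = \left[\frac{\partial R_{n}}{\partial X} j'(z) + \frac{\partial R_{n}}{\partial Y} j''(z) + \frac{\partial R_{n}}{\partial Z} j'''(z)\right]\partial(z),
\end{equation*}
where the partial derivatives are evaluated at $(j(z), j'(z), j''(z))$. But the bracket is precisely the ordinary $z$-derivative of $R_{n}(j(z), j'(z), j''(z))$, which equals $j^{(n+1)}(z)$ by the standard chain rule in $\mathbb{C}$. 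This closes the induction. The only step with any substance is the bookkeeping verification that the hypothesis on $z$ rules out exactly the points where the denominators of $R$ (and hence of every $R_{n}$) could vanish; the rest is routine.
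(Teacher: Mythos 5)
Your proof is correct and follows essentially the same strategy as the paper: solve the differential equation $\daleth = 0$ for $j'''$ as a rational function $R(j, j', j'')$, then compare the result of applying $\partial$ to this identity with the result of differentiating it holomorphically, closing by induction. You make the induction a bit more explicit by carrying along the rational functions $R_n$ and tracking their denominators, but the underlying argument is identical to the one in the paper.
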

\begin{proof}
  To prove the result for $n\geq 3$, we express equation (\ref{eq:diffj}) in the following form:
  \begin{equation}
      \label{eq:j'''}
      j''' = \frac{3(j'')^{2}}{2j'} - \frac{j^{2}-1968j+2654208}{2j^{2}(j-1728)^{2}}(j')^{3}.
  \end{equation}
  Now we do two things to this equation: on one hand derive it (as holomorphic functions), on the other, apply $\partial$ to it. Compare both results to get that $\partial(j''') = j^{(4)}(z)\partial(z)$. By induction, we obtain the desired result.
\end{proof}

Given that $i$ and $\rho$ are algebraic over $\mathbb{Q}$, we get the following corollary.

\begin{cor}
\label{cor:zeros}
On $\mathbb{C}$, for every $n\in\mathbb{N}$, if $j^{(n)}(z)=0$, then $z\in j\mathrm{cl}(\emptyset)$. 
\end{cor}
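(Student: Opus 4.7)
The plan is to unravel the definition of $j\mathrm{cl}(\emptyset)$ and show that for an arbitrary $\partial \in j\mathrm{Der}(\mathbb{C}/\emptyset)$ one has $\partial(z) = 0$. I would split on whether $z$ lies in the $\mathrm{SL}_{2}(\mathbb{Z})$-orbit of one of the four exceptional points $i, -i, \rho, \overline{\rho}$ flagged in the three bullet points just before the preceding proposition.

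In the orbit case, $z$ is obtained from an element of $\overline{\mathbb{Q}}$ by an integer M\"obius transformation, so $z \in \overline{\mathbb{Q}}$ as well; since every derivation on $\mathbb{C}$ vanishes on the algebraic numbers, $\partial(z) = 0$. (This is the same mechanism behind the Remark immediately preceding the corollary.) In the remaining case, the preceding proposition applies, giving the functional equation $\partial(j^{(m)}(z)) = j^{(m+1)}(z)\,\partial(z)$ for every $m \in \mathbb{N}$. Applying $\partial$ to the hypothesis $j^{(n)}(z) = 0$ yields $j^{(n+1)}(z)\,\partial(z) = 0$. Suppose toward contradiction that $\partial(z) \neq 0$; then $j^{(n+1)}(z) = 0$, and iterating the same relation forces $j^{(m)}(z) = 0$ for every $m \geq n$.

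The closing step is to show that this forces $j$ to be a polynomial, contradicting the fact that $j$ is a non-constant periodic holomorphic function (for instance $j(w+1) = j(w)$). Indeed, the Taylor expansion of $j$ around $z$ converges on a disc contained in the connected open set $\mathbb{H}^{+}$ (or $\mathbb{H}^{-}$), and the vanishing of all derivatives from order $n$ on makes $j$ agree there with a polynomial of degree $<n$; by the identity theorem $j$ equals that polynomial on the whole component, which is absurd. Therefore $\partial(z) = 0$, so $z \in j\mathrm{cl}(\emptyset)$.

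The only real subtlety---and the step I would most want to double-check---is this last global-to-local passage: that the vanishing of all sufficiently high derivatives of $j$ at one point really does contradict periodicity. Everything else is a mechanical application of the proposition just proved, together with the observation that the ``small'' cases $n=0,1$ never arise in Case~2 (since $j(z)=0$ or $j'(z)=0$ already places $z$ in one of the exceptional orbits, by the bullet points preceding the proposition).
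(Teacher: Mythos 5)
Your proof is correct and takes essentially the same approach as the paper: the paper also iterates $\partial(j^{(m)}(z)) = j^{(m+1)}(z)\,\partial(z)$ to force $j^{(m)}(z)=0$ for all $m\geq n$ under the assumption $\partial(z)\neq 0$, and derives a contradiction from the finiteness of the order of a zero of the non-identically-zero holomorphic function $j^{(n)}$. The paper handles the exceptional orbits implicitly in the sentence preceding the corollary (noting that $i$ and $\rho$ are algebraic) and leaves the ``$j$ is not a polynomial'' step unspoken, whereas you spell out both points explicitly.
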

\begin{proof}
  Suppose not, and let $\partial$ be a $j$-derivation such that $\partial(z)\neq 0$. The order of a zero of a holomorphic function is finite, so after applying $\partial$ to $j^{(n)}(z)$ enough consecutive times, we will get $\partial(z)=0$, a contradiction. 
\end{proof}

\begin{remark}
Recall that Schneider's theorem (see \cite{schneider}) says that, for $\tau\in\mathbb{H}$, both $\tau$ and $j(\tau)$ are algebraic over $\mathbb{Q}$ if and only if $\tau$ is special. This means then that if $\tau$ is algebraic over $\mathbb{Q}$ but not special, then $j(\tau)$ is transcendental over $\mathbb{Q}$. Then, in this case $j(\tau)$ is a transcendental number which is not $j$-generic. Indeed, if $\partial$ is a $j$-derivation on $\mathbb{C}$, then $\partial(j(\tau)) = j'(\tau)\partial(\tau)=0$.

Similarly, if we choose $\tau$ such that $j(\tau)$ is algebraic over $\mathbb{Q}$ but not the image of a special point, then necessarily $\tau$ must be transcendental over $\mathbb{Q}$. By Corollary \ref{cor:zeros} we get that $\tau$ is a transcendental number which is not $j$-generic. Both arguments give us families of complex numbers which are transcendental over $\mathbb{Q}$, but which are not $j$-generic. 

It is also known (see (2-c) of Theorem 9 of \cite{diaz}) that is $\tau\in\mathbb{H}$ is special and such that $j(\tau)\notin\left\{0,1728\right\}$, then $j'(\tau)$ is transcendental over $\mathbb{Q}$. So this gives yet another family. In the case of the exponential function there is an explicit equation relating $e$ and $\pi$ (i.e. $e^{\pi i}=-1$), which implies that $\pi$ is not exponentially transcendental. However, such a formula does not seem to be known in the case of the $j$-function. It is therefore not clear if we can decide whether or not $\pi i$ is $j$-generic or not. Results like Corollary 1.3 of \cite{nesterenko} may be seen as evidence that $j$, $j'$, $j''$ and $\pi$ are not related in a way that could allow us to prove that $\pi$ is not $j$-generic.
\end{remark}

Now we look for non-trivial $j$-derivations. The key idea here is that, to get $j$-derivations, we should look for derivations on $\mathbb{R}$ that respect the real and imaginary parts of $j$ on its fundamental domain. Let us set the following notation. If $F:U\rightarrow\mathbb{C}$ is a holomorphic function, where $U\subseteq\mathbb{C}$, then the real and imaginary parts of $F$, say $F_{1}$ and $F_{2}$ respectively, are the real analytic functions with domain:
\begin{equation*}
U_{\mathbb{R}} := \left\{(x,y)\in\mathbb{R}^{2} : x+iy\in U\right\}
\end{equation*}
satisfying $F(x+iy) = F_{1}(x,y) + iF_{2}(x,y)$, for $(x,y)\in U_{\mathbb{R}}$. 

Given two functions $f_{1},f_{2}:\mathbb{R}\rightarrow\mathbb{R}$, define the function $[f_{1}:f_{2}]:\mathbb{C}\rightarrow\mathbb{C}$ as $[f_{1}:f_{2}](x+iy) := (f_{1}(x)-f_{2}(y)) + i(f_{1}(y)+f_{2}(x))$ (for $x,y\in\mathbb{R}$). 
\begin{defi}
Let $\partial$ be a derivation on $\mathbb{C}$ and $F:U\rightarrow\mathbb{C}$ be a holomorphic function. We say that \emph{$\partial$ respects $F$ at a point $z\in U$} if: 
\begin{equation*}
    \partial(F(z)) = \frac{d F}{d z}(z)\partial(z).
\end{equation*}
If $F_{0}$ is the real or imaginary part of $F$ and $\partial'$ is a derivation on $\mathbb{R}$, then we say that \emph{$\partial'$ respects $F_{0}$ at $z=x+iy\in U$} if:
\begin{equation*}
    \partial'(F_{0}(x,y)) = \partial_{1}( F_{0}(x,y))\partial'(x) + \partial_{2}( F_{0}(x,y))\partial'(y),
\end{equation*}
where $\partial_{1},\partial_{2}$ are the partial derivatives of $F_{0}$ with respect to the first and second variables fo $F_{0}$ respectively. 
\end{defi}
The next lemma tells us that to obtain $j$-derivations on $\mathbb{C}$ we can use $j$-derivations on $\mathbb{R}$. 

\begin{lem}[see Lemma $4.2$ of \cite{wilkie}]
If $\partial_{1},\partial_{2}$ are derivations on $\mathbb{R}$, then $[\partial_{1}:\partial_{2}]$ is a derivation on $\mathbb{C}$. Further, if $F$ is a holomorphic function with real and imaginary parts $F_{1},F_{2}$ and domain $U\subseteq\mathbb{C}$, and if $\partial_{1},\partial_{2}$ respect $F_{1}$ and $F_{2}$ at a point $(x,y)\in U_{\mathbb{R}}$, then $[\partial_{1}:\partial_{1}]$ respects $F$ at the point $x+iy$.
\end{lem}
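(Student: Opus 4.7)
The plan has two parts, matching the two claims.

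For the first claim, I would verify the two derivation axioms by direct computation from the formula $[\partial_1:\partial_2](a+bi)=(\partial_1(a)-\partial_2(b))+i(\partial_1(b)+\partial_2(a))$. Additivity is immediate because each $\partial_k$ is additive and taking real/imaginary parts commutes with addition in $\mathbb{C}$. For the Leibniz rule on $z=x+iy$ and $w=u+iv$, I would expand both sides of $[\partial_1:\partial_2](zw)=z\,[\partial_1:\partial_2](w)+w\,[\partial_1:\partial_2](z)$ after writing $zw=(xu-yv)+i(xv+yu)$; applying the Leibniz rule to each $\partial_k$ on the left and multiplying out on the right, one sees that corresponding real and imaginary parts agree term by term. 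This is a bookkeeping exercise with no real content beyond the fact that $\mathbb{C}\cong\mathbb{R}\oplus i\mathbb{R}$ as an $\mathbb{R}$-algebra.

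For the second claim (I read the final symbol as a typo for $[\partial_1:\partial_2]$), the key tool is the Cauchy--Riemann equations $\partial_x F_1=\partial_y F_2$ and $\partial_y F_1=-\partial_x F_2$, together with the identity $F'(x+iy)=\partial_x F_1+i\,\partial_x F_2$. I would compute the left-hand side of the claim by unfolding
\begin{equation*}
[\partial_1:\partial_2]\bigl(F(z)\bigr)=[\partial_1:\partial_2]\bigl(F_1(x,y)+iF_2(x,y)\bigr)=\bigl(\partial_1 F_1-\partial_2 F_2\bigr)+i\bigl(\partial_1 F_2+\partial_2 F_1\bigr),
\end{equation*}
and then using the hypothesis that each $\partial_k$ respects each $F_\ell$ to rewrite every term $\partial_k F_\ell$ as $\partial_x F_\ell(x,y)\,\partial_k(x)+\partial_y F_\ell(x,y)\,\partial_k(y)$. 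Separately, I would compute the right-hand side $F'(z)\,[\partial_1:\partial_2](z)$ by multiplying $\partial_x F_1+i\,\partial_x F_2$ against $(\partial_1(x)-\partial_2(y))+i(\partial_1(y)+\partial_2(x))$. Substituting the Cauchy--Riemann relations (so that every occurrence of $\partial_y F_\ell$ in the first expression is converted to a $\partial_x F_{\ell'}$) reveals that both sides collect the same four monomials in $\partial_k(x), \partial_k(y)$, and hence are equal.

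The only genuine point is matching four coefficients twice (once in the real part, once in the imaginary part), and this reduction is exactly what Cauchy--Riemann is designed for; there is no obstacle beyond careful index-tracking. No appeal to properties specific to $j$ is used here, which is why the lemma is stated for an arbitrary holomorphic $F$ and will later be specialised to $F=j,j',j''$ to produce non-trivial $j$-derivations on $\mathbb{C}$ from suitable real derivations.
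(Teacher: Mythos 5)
The paper does not give its own proof of this lemma---it is quoted directly as Lemma~4.2 of \cite{wilkie}---so there is nothing to compare against beyond the standard argument, which is exactly what you present. Your direct verification is correct: additivity is immediate, the Leibniz rule is an eight-term bookkeeping check against the expansion of $zw$, and for the second claim the Cauchy--Riemann equations convert the four $\partial_k F_\ell$ terms so that both real and imaginary parts collapse to $(\partial_x F_1 + i\partial_x F_2)\bigl((\partial_1 x - \partial_2 y) + i(\partial_1 y + \partial_2 x)\bigr)$, using $F' = \partial_x F_1 + i\partial_x F_2$ as in Lemma~\ref{lem:complex}. You are also right that the $[\partial_1:\partial_1]$ in the statement is a typo for $[\partial_1:\partial_2]$; the computation only closes with the second slot carrying $\partial_2$.
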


Let $F:U\rightarrow\mathbb{C}$ be a holomorphic function. The \emph{Schwarz reflection of $F$} is the holomorphic function $F^{SR}:U'\rightarrow\mathbb{C}$ given by $F^{SR}(z) := \overline{F(\overline{z})}$, where $U' = \left\{z : \overline{z}\in U\right\}$ and the bar denotes complex conjugation. The following theorem determines the kind of $j$-derivations we could hope to find on $\mathbb{C}$. Given a collection $\mathcal{C}$ of holomorphic functions, let $\mathrm{Der}_{\mathbb{C}}(\mathcal{C})$ denote the set of derivations on $\mathbb{C}$ which respect every function in $\mathcal{C}$, and let $\mathrm{Der}_{\mathbb{R}}(\mathcal{C}_{\mathrm{real}})$ be the set of derivations on $\mathbb{R}$ which respect the real and imaginary parts of the functions in $\mathcal{C}$. 

\begin{thm}[see Theorem $4.3$ of \cite{wilkie}]
Let $\mathcal{C}$ be any collection of holomorphic functions closed under Schwarz reflection and holomorphic derivation. Then the elements of $\mathrm{Der}_{\mathbb{C}}(\mathcal{C})$ are precisely the maps of the form $[\lambda:\mu]:\mathbb{C}\rightarrow\mathbb{C}$, for $\lambda,\mu\in\mathrm{Der}_{\mathbb{R}}(\mathcal{C}_{\mathrm{real}})$. 
\end{thm}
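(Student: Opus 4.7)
The plan is to prove both inclusions. The implication $[\lambda:\mu]\in\mathrm{Der}_{\mathbb{C}}(\mathcal{C})$ for $\lambda,\mu\in\mathrm{Der}_{\mathbb{R}}(\mathcal{C}_{\mathrm{real}})$ follows immediately from the preceding lemma: at any $z=x+iy$ in the domain of $F\in\mathcal{C}$, the hypotheses say that both $\lambda$ and $\mu$ respect each of $F_{1}$ and $F_{2}$ at $(x,y)$, which is exactly what that lemma requires in order for $[\lambda:\mu]$ to respect $F$ at $x+iy$.

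For the nontrivial direction, fix $\partial\in\mathrm{Der}_{\mathbb{C}}(\mathcal{C})$. Since $\partial$ vanishes on the prime field, $\partial(-1)=0$, and then $0=\partial(i^{2})=2i\partial(i)$ forces $\partial(i)=0$. Define $\lambda,\mu:\mathbb{R}\to\mathbb{R}$ by $\lambda(x)=\mathrm{Re}\,\partial(x)$ and $\mu(x)=\mathrm{Im}\,\partial(x)$; additivity and the Leibniz rule of $\partial$ pass to $\lambda$ and $\mu$, making them derivations on $\mathbb{R}$. Because $\partial(i)=0$, the identity $\partial(x+iy)=\partial(x)+i\partial(y)$ expands to $\partial=[\lambda:\mu]$. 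What remains is to verify that $\lambda$ and $\mu$ respect $F_{1}$ and $F_{2}$ at every real point, for every $F\in\mathcal{C}$.

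Here the closure of $\mathcal{C}$ under Schwarz reflection is crucial: given $F\in\mathcal{C}$ with domain $U$ and $z=x+iy\in U$, we have $F^{SR}\in\mathcal{C}$, so $\partial$ also respects $F^{SR}$, and $\bar{z}$ lies in the domain of $F^{SR}$. The basic identities
\begin{equation*}
F(z)+F^{SR}(\bar{z})=2F_{1}(x,y),\qquad F(z)-F^{SR}(\bar{z})=2iF_{2}(x,y),
\end{equation*}
together with $(F^{SR})'(\bar{z})=\overline{F'(z)}$, yield after applying $\partial$ and using that it respects $F$ at $z$ and $F^{SR}$ at $\bar{z}$:
\begin{equation*}
2\partial(F_{1}(x,y))=F'(z)\partial(z)+\overline{F'(z)}\partial(\bar{z}),
\end{equation*}
\begin{equation*}
2i\partial(F_{2}(x,y))=F'(z)\partial(z)-\overline{F'(z)}\partial(\bar{z}).
\end{equation*}
Writing $F'(z)=\partial_{1}F_{1}(x,y)-i\partial_{2}F_{1}(x,y)$ via Cauchy-Riemann, and using the explicit expressions $\partial(z)=(\lambda(x)-\mu(y))+i(\mu(x)+\lambda(y))$ and $\partial(\bar{z})=(\lambda(x)+\mu(y))+i(\mu(x)-\lambda(y))$, both right-hand sides compute to expressions of the desired form. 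Matching real and imaginary parts against the left-hand sides yields
\begin{equation*}
\lambda(F_{t}(x,y))=\partial_{1}F_{t}\cdot\lambda(x)+\partial_{2}F_{t}\cdot\lambda(y),\quad \mu(F_{t}(x,y))=\partial_{1}F_{t}\cdot\mu(x)+\partial_{2}F_{t}\cdot\mu(y),
\end{equation*}
for $t=1,2$, which are precisely the conditions that $\lambda$ and $\mu$ respect $F_{1}$ and $F_{2}$ at $(x,y)$.

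The main obstacle is the bookkeeping in this Cauchy-Riemann calculation. The relation $\partial(F(z))=F'(z)\partial(z)$ alone is a single complex equation that entangles $\lambda(F_{1})$ with $\mu(F_{2})$, and $\lambda(F_{2})$ with $\mu(F_{1})$, so it cannot by itself force the four required identities. The second equation obtained from $F^{SR}$ at $\bar{z}$ is precisely what decouples these pairs after symmetric and antisymmetric combination, so the Schwarz-reflection closure of $\mathcal{C}$ is indispensable.
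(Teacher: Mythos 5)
Your proof is correct; the key steps all check out. The algebraic verifications are right: $\partial(i)=0$ for any derivation; the real/imaginary parts $\lambda,\mu$ of $\partial|_{\mathbb{R}}$ are derivations on $\mathbb{R}$ with $\partial=[\lambda:\mu]$; the Schwarz-reflection identities $F(z)+F^{SR}(\bar z)=2F_1$, $F(z)-F^{SR}(\bar z)=2iF_2$ together with $(F^{SR})'(\bar z)=\overline{F'(z)}$ produce, after applying $\partial$ and separating real/imaginary parts using Cauchy--Riemann, exactly the four chain-rule conditions for $\lambda,\mu$ to respect $F_1,F_2$. Your closing remark correctly identifies why Schwarz-reflection closure is essential and not decorative: a single use of $\partial(F(z))=F'(z)\partial(z)$ entangles $\lambda(F_1)$ with $\mu(F_2)$ and cannot yield the four scalar identities on its own.

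For context: the paper itself cites this result to Wilkie (Theorem 4.3 of that reference) and does not supply a proof, so there is no in-paper argument to compare against. Your route --- decompose $\partial$ along $\mathbb{R}\oplus i\mathbb{R}$, recover $F_1,F_2$ from $F$ and $F^{SR}$, and read off the respecting conditions via Cauchy--Riemann --- is the natural one and matches the structure suggested by the paper's preceding lemma (Lemma 4.2 of Wilkie, stated just before the theorem), which supplies the easy inclusion as you observe.

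One minor point worth making explicit if you write this up carefully: when you apply ``$\partial$ respects $F^{SR}$ at $\bar z$'' you are implicitly using that $\bar z$ lies in the domain $U'=\{w:\bar w\in U\}$ of $F^{SR}$, which holds precisely because $z\in U$; and you are using that $F^{SR}\in\mathcal{C}$, which is where the closure hypothesis enters. You do say both, but the order of quantification (fix $z\in U$ first, then pass to $F^{SR}$ and $\bar z$) deserves a sentence.
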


Let $\overline{\mathbb{R}}$ be the expansion of $\mathbb{R}$ with the real and imaginary parts of $j$, $j'$ and $j''$, with all these functions restricted to $\mathcal{F}$. It is noted in \cite{peterzil-starchenko} that the function $j:\mathcal{F}\rightarrow\mathbb{C}$ is definable in $\mathbb{R}_{\mathrm{an},\exp}$. So the structure $\overline{\mathbb{R}}$ is o-minimal (for definitions and basic results of o-minimality see e.g. \cite{macpherson}). The next lemma is a basic result from complex analysis.

\begin{lem}
\label{lem:complex}
Let $f:U\rightarrow\mathbb{C}$ be a holomorphic function. Write $f(x+iy) = u(x,y) + iv(x,y)$, for $(x,y)\in U_{\mathbb{R}}$. Then:
\begin{equation*}
    \mathcal{R}e\left(\frac{df}{dz}\right) = u_{x}, \qquad \mathcal{I}m\left(\frac{df}{dz}\right) = v_{x},
\end{equation*}
where $u_{x}$ and $v_{x}$ denote the partial derivatives of $u$ and $v$ with respect to the first variable.
\end{lem}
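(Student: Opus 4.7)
The plan is to exploit the fact that, for a holomorphic function, the complex derivative $\frac{df}{dz}(z_0)$ may be computed as a limit along any direction in the plane, in particular along the real axis. Writing $z = x+iy$ and $z_0 = x_0 + iy_0$, I would take $h$ real and tending to $0$ and compute
\begin{equation*}
\frac{df}{dz}(z_0) = \lim_{h\to 0}\frac{f(z_0+h) - f(z_0)}{h} = \lim_{h\to 0}\frac{u(x_0+h,y_0) - u(x_0,y_0)}{h} + i\lim_{h\to 0}\frac{v(x_0+h,y_0) - v(x_0,y_0)}{h}.
\end{equation*}
The two limits on the right are precisely $u_x(x_0,y_0)$ and $v_x(x_0,y_0)$, giving $\frac{df}{dz}(z_0) = u_x(x_0,y_0) + i v_x(x_0,y_0)$. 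Taking real and imaginary parts then yields the claim.

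Alternatively, the same identity follows from the Cauchy--Riemann equations $u_x = v_y$ and $u_y = -v_x$ combined with taking the limit along the imaginary axis, which would give $\frac{df}{dz} = v_y - i u_y$; comparing the two expressions recovers Cauchy--Riemann, so either route works. I would choose the first route because it is the most direct and does not require separately invoking the Cauchy--Riemann equations as a hypothesis.

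There is no real obstacle here: the content is a one-line computation from the definition of the complex derivative together with the fact that holomorphicity guarantees the limit is independent of direction. The only thing to be careful about is ensuring that the two real limits actually exist, but this is immediate since holomorphicity of $f$ at $z_0$ forces the partial derivatives of $u$ and $v$ to exist (and be continuous) in a neighbourhood of $(x_0,y_0)$.
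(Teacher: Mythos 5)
Your proof is correct; this is the standard one-line argument, computing the complex derivative as the directional limit along the real axis to obtain $\frac{df}{dz} = u_x + iv_x$ directly. The paper states Lemma \ref{lem:complex} without proof, describing it only as ``a basic result from complex analysis,'' so there is no argument in the paper to compare against — your write-up simply supplies the expected elementary justification.
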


\begin{prop}
\label{prop:jderonr}
The structure $\overline{\mathbb{R}}$ has non-trivial derivations that respect the real and imaginary parts of $j$, $j'$ and $j''$ restricted to the interior of $\mathcal{F}$. 
\end{prop}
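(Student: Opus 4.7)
The plan is to build such a derivation using the o-minimality of $\overline{\mathbb{R}}$, in the spirit of Wilkie's construction (Theorem $4.3$ of \cite{wilkie}). The key point is that the real and imaginary parts of $j,j',j''$ restricted to $\mathcal{F}$ belong to the language of $\overline{\mathbb{R}}$ and are $C^{\infty}$ on the interior of $\mathcal{F}$ (since $j$ is holomorphic there). Hence any derivation of $\mathbb{R}$ that respects every $\emptyset$-definable function of $\overline{\mathbb{R}}$ at its points of differentiability will, in particular, respect these functions on $\mathrm{int}(\mathcal{F})$.

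First, because $\overline{\mathbb{R}}$ is o-minimal in a countable language, the definable closure operator $\mathrm{dcl}_{\overline{\mathbb{R}}}$ is a pregeometry on $\mathbb{R}$ and $\mathrm{dcl}_{\overline{\mathbb{R}}}(\emptyset)$ is countable. Pick $\tau\in\mathbb{R}\setminus\mathrm{dcl}_{\overline{\mathbb{R}}}(\emptyset)$ and extend $\{\tau\}$ to a $\mathrm{dcl}$-basis $I=\{\tau\}\cup B$ of $\mathbb{R}$. Declare $\partial(\tau)=1$ and $\partial(x)=0$ for all $x\in B\cup\mathrm{dcl}_{\overline{\mathbb{R}}}(\emptyset)$. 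For an arbitrary $a\in\mathbb{R}$, write $a=f(\tau,b_{1},\ldots,b_{n})$ for some $\emptyset$-definable partial function $f$ and elements $b_{1},\ldots,b_{n}\in B$, and set $\partial(a):=(\partial_{1}f)(\tau,b_{1},\ldots,b_{n})$, where $\partial_{1}$ denotes the partial derivative in the first variable.

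The main obstacle is showing that $\partial$ is well-defined. Suppose $a=f(\tau,\overline{b})=g(\tau,\overline{b}')$ with $f,g$ both $\emptyset$-definable. The set $S=\{x\in\mathbb{R}:f(x,\overline{b})=g(x,\overline{b}')\}$ is definable over $\overline{b}\cup\overline{b}'$ and contains $\tau$. Since $I$ is $\mathrm{dcl}$-independent, $\tau\notin\mathrm{dcl}_{\overline{\mathbb{R}}}(\overline{b}\cup\overline{b}')$, so o-minimality forces $S$ to contain an open interval around $\tau$; otherwise $\tau$ would be an endpoint or isolated point of $S$ and hence definable from $\overline{b}\cup\overline{b}'$. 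Therefore $f(\cdot,\overline{b})$ and $g(\cdot,\overline{b}')$ agree on a neighborhood of $\tau$, and $\partial_{1}f(\tau,\overline{b})=\partial_{1}g(\tau,\overline{b}')$. The same genericity argument together with the monotonicity theorem shows that $f(\cdot,\overline{b})$ is $C^{\infty}$ at $\tau$, so the partial derivative exists. The derivation axioms follow by applying this well-definedness to the $\emptyset$-definable functions $(x,y)\mapsto x+y$ and $(x,y)\mapsto xy$ together with the classical chain rule.

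Finally, to check that $\partial$ respects a real or imaginary part $F$ of $j,j',j''$ at an interior point $(x,y)\in\mathrm{int}(\mathcal{F})$: writing $x=f(\tau,\overline{b})$ and $y=g(\tau,\overline{b}')$, the composition $F(x,y)=F(f(\tau,\overline{b}),g(\tau,\overline{b}'))$ has the form $h(\tau,\overline{b},\overline{b}')$ for some $\emptyset$-definable $h$, so $\partial(F(x,y))=(\partial_{1}h)(\tau,\overline{b},\overline{b}')$. Since $F$ is real analytic on $\mathrm{int}(\mathcal{F})$, the ordinary chain rule gives $\partial_{1}h=F_{x}\cdot\partial_{1}f+F_{y}\cdot\partial_{1}g$ at $(\tau,\overline{b},\overline{b}')$, which equals $F_{x}(x,y)\partial(x)+F_{y}(x,y)\partial(y)$, as required. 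Non-triviality is immediate from $\partial(\tau)=1$.
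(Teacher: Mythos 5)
Your argument is correct and follows essentially the same approach as the paper: pick $\tau\notin\mathrm{dcl}_{\overline{\mathbb{R}}}(\emptyset)$, extend to a $\mathrm{dcl}$-basis, define $\partial$ by differentiating the implicit function at $\tau$, and use o-minimality (genericity of $\tau$ over parameters) for well-definedness. The only cosmetic difference is in the final verification: you state it as a one-line chain-rule computation showing $\partial(F(x,y))=F_{x}\partial(x)+F_{y}\partial(y)$ directly, whereas the paper goes one step further and uses Lemma \ref{lem:complex} and the Cauchy--Riemann equations to rewrite those partials as the real and imaginary parts of $j'$, which is not required by the definition of ``respects'' but sets up the bookkeeping for the next theorem.
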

\begin{proof}
  Let $\mathrm{dcl}$ denote the definable closure operator of the o-minimal structure $\overline{\mathbb{R}}$ (see section 2.2 of \cite{macpherson} for definitions). Choose $\tau\in\mathbb{R}$ such that $\tau\notin\mathrm{dcl}(\emptyset)$. Let $B\subseteq\mathbb{R}$ be such that $\left\{\tau\right\}\cup B$ is a basis of $\overline{\mathbb{R}}$ with respect to $\mathrm{dcl}$, so in particular $\mathrm{dcl}(\tau, B)=\mathbb{R}$. Given $a\in\mathbb{R}$ there is a tuple $\overline{b}$ of $B$ and a formula $\phi_{a}(\tau,\overline{b},x)$ such that $a$ is the only point satisfying $\phi_{a}$. By o-minimality, the formula $\psi(y)\equiv\exists !x(\phi_{a}(y,\overline{b},x))$ partitions $\mathbb{R}$ into finitely many intervals. As $\tau$ satisfies $\psi(y)$, but $\tau\notin\mathrm{dcl}(B)$, then $\tau$ is not an end-point of any of these intervals. Then there exists a function $f_{a}(t):\mathbb{R}\rightarrow\mathbb{R}$ such that around $\tau$, $f_{a}(y)=x\iff\phi_{a}(y,\overline{b},x)$. In particular then $f_{a}(\tau)=a$. Using the $\epsilon-\delta$ definitions of continuity and derivation one can also prove that $f_{a}(t)$ is differentiable in an interval around $\tau$ (this is a standard o-minimal result, for reference see Theorem 4.2 of \cite{pillay-steinhorn}).  
  
  Define $\partial:\mathbb{R}\rightarrow\mathbb{R}$ as $\partial(a) = \frac{df_{a}}{dt}(\tau)$. Of course, one should ask if this is well defined. It may be that there exist many different functions satisfying the properties of $f_{a}$. But if $g(t)$ is another such function, then using o-minimality and looking at the set $\left\{t\in\mathbb{R} : f_{a}(t)=g(t)\right\}$, one concludes that there is an interval around $\tau$ on which both functions agree (because $\tau\notin\mathrm{dcl}(B)$). Therefore $\partial$ is well defined.
  
  Now we show that $\partial$ is a derivation which respects $j$, $j'$ and $j''$ on the interior of $\mathcal{F}$. Let $a_{1},a_{2}\in\mathbb{R}$. Note that if we have functions $f_{a_{1}}$ and $f_{a_{2}}$, then we can choose $f_{a_{1}+a_{2}}$ to be $f_{a_{1}}+f_{a_{2}}$. Similarly, we can choose $f_{a_{1}a_{2}}$ as $f_{a_{1}}f_{a_{2}}$. This proves that $\partial$ is a derivation. 
  
  Let $j_{1},j_{2}$ be the real and imaginary parts of $j$ respectively. Let $z$ be in the interior of $\mathcal{F}$, and set $x$ and $y$ as the real and imaginary parts of $z$. To calculate $\partial(j_{1}(z))$ we consider the function $g(t) = j_{1}(f_{x}(t),f_{y}(t))$, where we have changed our notation as we now consider $j_{1}$ as a function $j_{1}:\mathbb{R}^{2}\rightarrow\mathbb{R}$. These changes between $\mathbb{R}^{2}$ and $\mathbb{C}$ will be used many times, but they are very natural, so no confusion should arise.
  \begin{equation*}
      \frac{dg}{dt}(\tau) = \partial_{1}(j_{1})\frac{df_{x}}{dt}(\tau) + \partial_{2}(j_{1})\frac{df_{y}}{dt}(\tau),
  \end{equation*}
  where $\partial_{1}$ and $\partial_{2}$ denote the real analytic partial derivatives of $j_{1}$ with respect to each of its variables. By Lemma \ref{lem:complex} we get that $\partial_{1}(j_{1}) = j'_{1}$, where $j'_{1}$ is the real part of $j'$. By the Cauchy-Riemann equations we also get $\partial_{2}(j_{1}) = -\partial_{1}(j_{2})$, and again by Lemma \ref{lem:complex} we get $\partial_{1}(j_{2}) = j'_{2}$, where $j_{2}$ is the imaginary part of $j'$. Therefore:
  \begin{equation*}
      \partial(j_{1}(z)) = j'_{1}(z)\partial(x)-j'_{2}(z)\partial(y).
  \end{equation*}
  Note that this condition is precisely what you get when you look at the real part of the equation $\partial(j(z)) = j'(z)\partial(z)$. Similarly one shows that $\partial(j_{2}(z)) = j'_{2}(z)\partial(x) + j_{1}(z)\partial(y)$. An analogous argument shows that $\partial$ respects the real and imaginary parts of $j'$ and $j''$ restricted to $\mathcal{F}$. 
\end{proof}

\begin{thm}
There is a non-trivial $j$-derivation on $\mathbb{C}$.
\end{thm}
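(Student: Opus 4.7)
The plan is to lift the derivation $\partial_{\mathbb{R}}$ on $\mathbb{R}$ from Proposition \ref{prop:jderonr} to $\mathbb{C}$ via the bracket construction: set $\partial := [\partial_{\mathbb{R}}:\partial_{\mathbb{R}}]$. The lemma preceding Proposition \ref{prop:jderonr} then tells us that $\partial$ is a derivation on $\mathbb{C}$, and that it respects $j, j', j''$ at every point in the interior of $\mathcal{F}$. Non-triviality is immediate, since $\partial(\tau) = \partial_{\mathbb{R}}(\tau) = 1 \neq 0$ for the $\tau$ chosen in the construction of $\partial_{\mathbb{R}}$. What remains is to upgrade the pointwise identity $\partial(f(z)) = f'(z)\partial(z)$, for $f \in \{j,j',j''\}$, from the interior of $\mathcal{F}$ to every $z \in D = \mathbb{H}^{+}\cup\mathbb{H}^{-}$, so that $\partial$ becomes a $j$-derivation in the sense of the definition.

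For $z \in \mathbb{H}^{+}$ with some $g \in \mathrm{SL}_{2}(\mathbb{Z})$ of entries $a,b,c,d$ sending it into the interior of $\mathcal{F}$, I would apply the Leibniz rule to the identity $(cz+d)(gz) = az+b$. Because $a,b,c,d$ are integers and hence $\partial$-constants, this yields $\partial(gz) = \partial(z)/(cz+d)^{2}$. The modular invariance $j(z) = j(gz)$ and its analytic derivative $j'(z) = j'(gz)/(cz+d)^{2}$, together with $\partial(j(gz)) = j'(gz)\partial(gz)$ (valid because $gz$ is interior), then give $\partial(j(z)) = j'(z)\partial(z)$. Analogous but longer computations using the transformation rules for $j'$ and $j''$ under $\mathrm{SL}_{2}(\mathbb{Z})$---which introduce extra polynomial corrections in $c$ and $1/(cz+d)$---yield the corresponding identities for $j'$ and $j''$. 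Points $z \in \mathbb{H}^{-}$ are handled symmetrically after noting that $\partial$ commutes with complex conjugation (visible from the symmetric choice $[\partial_{\mathbb{R}}:\partial_{\mathbb{R}}]$) and that $j$ on $\mathbb{H}^{-}$ was defined precisely as the Schwarz reflection of $j|_{\mathbb{H}^{+}}$.

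Special points $z \in D$ are dispatched separately. Since $gz = z$ for some non-scalar $g \in \mathrm{GL}_{2}(\mathbb{Q})$, the point $z$ satisfies a quadratic equation over $\mathbb{Q}$ and is therefore algebraic, forcing $\partial(z) = 0$. Moreover, the real and imaginary parts of $z$ are $\emptyset$-definable in $\overline{\mathbb{R}}$, and since $j, j', j''$ are among the primitives of $\overline{\mathbb{R}}$, so are the real and imaginary parts of $j(z), j'(z), j''(z)$. Hence all these values lie in $\mathrm{dcl}(\emptyset)$, on which $\partial_{\mathbb{R}}$ vanishes by its very construction in Proposition \ref{prop:jderonr}; the $j$-axioms then collapse to $0 = 0$ at these points.

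The step I expect to be the main obstacle is dealing with non-special boundary points of fundamental domains: these are neither interior points of $\mathcal{F}$ nor do they enjoy the $\emptyset$-definability that rescues special points. I would address this by enlarging the o-minimal structure $\overline{\mathbb{R}}$ to carry $j, j', j''$ on an open neighbourhood of $\mathcal{F}$---such an extension is available because $j, j', j''$ are holomorphic on all of $\mathbb{H}^{+}$, so the proof of Proposition \ref{prop:jderonr} goes through unchanged---so that every non-special point of $\mathbb{H}^{+}$ becomes interior to some enlarged fundamental domain, after which the $\mathrm{SL}_{2}(\mathbb{Z})$-translation argument of the second paragraph applies verbatim.
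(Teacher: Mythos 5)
Your overall strategy is sound and, in the crucial step of how to push the derivation off the interior of $\mathcal{F}$, genuinely different from the paper's. The paper never enlarges the o-minimal structure: instead, for a point $z$ that cannot be moved to the interior of $\mathcal{F}$ by $\mathrm{SL}_{2}(\mathbb{Z})$ (i.e.\ $z$ lies on the boundary of every fundamental domain it meets), it picks $g$ with $N(g)g$ of determinant $2$, uses the fact that $\Phi_{2}(X,Y)=0$ has no singular points in $\mathbb{C}^{2}$, and recovers the axioms at $z$ by differentiating $\Phi_{N}(j(z),j(gz))=0$ and comparing with the effect of $\partial$. This requires the modular-polynomial apparatus and a smoothness fact about $\Phi_{2}$, but costs nothing in the o-minimal setup. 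Your alternative --- enlarging $\overline{\mathbb{R}}$ so that the real and imaginary parts of $j,j',j''$ are primitives on an open neighbourhood $U\supset\mathcal{F}$ --- makes the boundary disappear and lets you run the plain $\mathrm{SL}_{2}(\mathbb{Z})$-invariance computation everywhere, which is arguably simpler; the price is that you must check that the enlarged structure is still o-minimal (definability in $\mathbb{R}_{\mathrm{an},\exp}$ of $j$ on $U$, including near the cusp via the $q$-expansion), rather than invoking Peterzil--Starchenko for $j|_{\mathcal{F}}$ as the paper does. Your interior computation via $(cz+d)(gz)=az+b$ and the transformation laws for $j'$, $j''$ is correct, and the separate dispatch of special points is harmless but unnecessary once $U$ is open --- $i$ and $\rho$ become interior to $U$.

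There is, however, one concrete error. You take $\partial=[\partial_{\mathbb{R}}:\partial_{\mathbb{R}}]$ and then assert two things that do not hold for this choice: that $\partial(\tau)=\partial_{\mathbb{R}}(\tau)=1$, and that $\partial$ commutes with complex conjugation. From the definition $[f_{1}:f_{2}](x+iy)=(f_{1}(x)-f_{2}(y))+i(f_{1}(y)+f_{2}(x))$, one gets $[\partial_{\mathbb{R}}:\partial_{\mathbb{R}}](\tau)=(1+i)\partial_{\mathbb{R}}(\tau)$, and more importantly $[\partial_{\mathbb{R}}:\partial_{\mathbb{R}}]$ does not map $\mathbb{R}$ into $\mathbb{R}$, so it cannot commute with conjugation. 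The paper instead uses $[\partial:0]$, which does preserve $\mathbb{R}$ and therefore does satisfy $\partial(\bar{z})=\overline{\partial(z)}$, making the passage to $\mathbb{H}^{-}$ legitimate. Your $\mathbb{H}^{-}$ step as written is therefore broken; the easy repair is to replace $[\partial_{\mathbb{R}}:\partial_{\mathbb{R}}]$ by $[\partial_{\mathbb{R}}:0]$ throughout (nothing else in your argument uses the second slot), or alternatively to include the Schwarz reflections $j^{SR},(j')^{SR},(j'')^{SR}$ in the collection $\mathcal{C}$ and appeal to Wilkie's Theorem 4.3, which would give the $\mathbb{H}^{-}$ case directly without any conjugation-commutation claim. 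The first of these errors is cosmetic; the second needs the fix above before the proof closes.
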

\begin{proof}
  Let $\partial$ be a non-trivial derivation on $\mathbb{R}$ given by Proposition \ref{prop:jderonr}. We now consider the derivation $[\partial:0]$ on $\mathbb{C}$. As this derivation extends $\partial$, we will keep the same name, so that $\partial=[\partial:0]$. We show that $\partial$ is in fact a $j$-derivation. It is clear that $\partial$ satisfies the axioms of $j$-derivation for every $z$ in the interior of the standard fundamental domain $\mathcal{F}$. Choose now any $z\in\mathbb{H}$. 
  
  Choose $g\in G$ such that $gz$ is in the interior of $\mathcal{F}$ and $(j(z),j(gz))$ is not a singular point of $\Phi_{N}(X,Y)=0$ (where $N = \det(N(g)g)$). When $z$ is not in the boundary of a fundamental domain, we can choose $g\in \mathrm{SL}_{2}(\mathbb{Z})$, and as $\Phi_{1}(X,Y) = X-Y$ we are guaranteed that $(j(z),j(gz))$ is not a singularity. If $z$ is in the boundary of a fundamental domain, then we can choose $g$ so that $N=2$, as $\Phi_{2}(X,Y)=0$ also does not have singularities in $\mathbb{C}^{2}$ (this can be seen as a consequence of Proposition 3.1 and Corollary 3.1 of \cite{klyachko}, or can be proven directly using the explicit form of $\Phi_{2}$ given, for example, in page 70 of \cite{zagier}). 
  
  Now that we have that, we consider the equation $\Phi_{N}(j(z),j(gz))=0$. Deriving with respect to $z$ gives us:
  \begin{equation}
  \label{eq:loc1}
  \Phi_{N1}(j(z),j(gz))j'(z) + \Phi_{N2}(j(z),j(gz))j'(gz)\frac{ad-bc}{(cz+d)^{2}} =0, 
  \end{equation}
  where $a,b,c,d$ are the entries of $g$ in the usual way, and $\Phi_{N1}$ and $\Phi_{N2}$ are the partial derivatives of $\Phi_{N}(X,Y)$ with respect to $X$ and $Y$ respectively. On the other hand, applying $\partial$ to the equation $\Phi_{N}(j(z),j(gz))=0$ (and using that $\partial$ respects $j$ and its derivatives inside of $\mathcal{F}$) gives us:
    \begin{equation}
    \label{eq:loc2}
  \Phi_{N1}(j(z),j(gz))\partial(j(z)) + \Phi_{N2}(j(z),j(gz))j'(gz)\frac{ad-bc}{(cz+d)^{2}}\partial(z) =0. 
  \end{equation}
  Observe now that if $\Phi_{N1}(j(z),j(gz))= 0$, then using the fact that $ad-bc=\det(g)\neq 0$, that $gz$ is in the interior of $\mathcal{F}$, and that the zeros of $j'$ are in the boundary of $\mathcal{F}$, we get from equation (\ref{eq:loc1}) that $\Phi_{N2}(j(z),j(gz))=0$, i.e. $(j(z),j(gz))$ is a singular point of $\Phi_{N}(X,Y)$, which contradicts our assumptions. So $\Phi_{N1}(j(z),j(gz))\neq 0$.
  
  Consider now equation (\ref{eq:loc2}). If $\partial(z)=0$, then, by the previous paragraph, $\partial(j(z))=0$. If $\partial(z)\neq0$, then comparing equations (\ref{eq:loc1}) and (\ref{eq:loc2}), we get that $j'(z) = \partial(j(z))/\partial(z)$. In both cases we obtain $\partial(j(z)) = j'(z)\partial(z)$.
  
  To verify the other axioms of $j$-derivations, we proceed analogously. We take equation (\ref{eq:loc1}), we differentiate it with respect to $z$ and we also apply $\partial$ to it. We compare both results. Using that  $\partial(j(z)) = j'(z)\partial(z)$ and that $(j(z),j(gz))$ is not a singularity of $\Phi_{N}$, one concludes that $\partial(j'(z))=j''(z)\partial(z)$. A similar argument will show that $\partial(j''(z))=j'''(z)\partial(z)$.

  Finally, if we have $z\in\mathbb{H}^{-}$, we have to work with $-z$ and proceed as above. So $\partial$ is a non-trivial $j$-derivation on $\mathbb{C}$. 
\end{proof}

\begin{remark}
\label{rem:countable}
As $i$ is algebraic over $\mathbb{Q}$, then for any derivation $\partial$ on $\mathbb{C}$ we have that $\partial(x+iy) = \partial(x)+i\partial(y)$. So, given $z=x+iy\in\mathbb{C}$ such that $x$ or $y$ is not in $\mathrm{dcl}(\emptyset)$ (where $\mathrm{dcl}$ is taken in $\overline{\mathbb{R}}$), there is a $j$-derivation which does not vanish on $z$. Therefore $j\mathrm{cl}(\emptyset)$ is contained in the set of complex numbers whose real and imaginary parts are both in $\mathrm{dcl}(\emptyset)$. Since this set is countable, we have that $j\mathrm{cl}(\emptyset)$ is countable. 
\end{remark}

A more general study of derivations that respect given holomorphic functions can be found in sections 2, 3 and 4 of \cite{jones-kirby-servi}.

\section{A Schanuel Property for $j$}

In this section we are interested in results relating the transcendence degree of $j$ and its derivatives with respect to the dimensions $\dim^{j}$ and $\dim_{F}^{g}$. The exposition is inspired by \cite{bays-kirby-wilkie}. First, let us introduce a ``modular version'' of Schanuel's conjecture. This is a special case of the \emph{generalised period conjecture} of Grothendieck-Andr\'e (see \cite{andre}, \cite{bertolin} and \cite{pila2}).

\begin{conj}[Modular Schanuel Conjecture: MSC]
Suppose $z_{1},\ldots,z_{n}\in \mathbb{H}^{+}$ are $\mathrm{gcl}$-independent and non-special. Then:
\begin{equation}
\label{eq:msc}
    \mathrm{t.d.}(\overline{z},j(\overline{z}),j'(\overline{z}),j''(\overline{z}))\geq 3n.
\end{equation}
\end{conj}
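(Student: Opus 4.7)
The strategy I would pursue is to follow the same high-level template that underlies the classical Schanuel conjecture: first reduce to a ``geodesically generic'' configuration using the structural hypotheses, then invoke an Ax-Schanuel-type functional statement to control algebraic relations, and finally attempt to promote the functional result to an arithmetic one by a specialization argument. Concretely, given $\mathrm{gcl}$-independent non-special $z_1,\dots,z_n \in \mathbb{H}^+$, I would first package the tuple $(\overline{z}, j(\overline{z}), j'(\overline{z}), j''(\overline{z}))$ as a point of a complex analytic variety inside $(\mathbb{H}^+ \times \mathbb{C}^3)^n$ cut out by the graphs of $j, j', j''$, and then consider the smallest algebraic subvariety $V \subseteq \mathbb{A}^{4n}$ through it. The conclusion $\mathrm{t.d.} \geq 3n$ is equivalent to asserting $\dim V \geq 3n$, i.e.\ that the tuple is as Zariski-generic as the functional constraints allow.

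Step one would be to apply Theorem \ref{thm:ax-schanuel-j} (Pila--Tsimerman's Ax-Schanuel for $j$) in its full form to the variety $V$: if the transcendence degree of the coordinates over $\mathbb{Q}$ were strictly less than $3n$, then $V$ would contain the image of a ``geodesic'' atypical component, which under the non-special and $\mathrm{gcl}$-independence hypotheses can be shown, by a calculation analogous to Lemma \ref{lem:disjoint1}, to force one of the $z_i$ to satisfy $z_i = gz_k$ with $g\in G$ non-scalar, contradicting $\mathrm{gcl}$-independence. The subtlety here is that Ax-Schanuel delivers this conclusion only when one also counts derivatives of the $z_i$ themselves inside the transcendence degree, so I would need to carefully package the constants $z_i$ as being ``differentiated trivially,'' which is standard: apply Ax-Schanuel to the family of translates $z_i + t$ and then specialize $t=0$.

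Step two, and here the true difficulty begins, is to pass from this functional / generic-fibre conclusion to the arithmetic one at the specific point $(z_1,\dots,z_n) \in \mathbb{H}^{+n}$. For Schanuel's original conjecture this step is precisely the content of the conjecture itself and is open; for MSC, the analogous obstacle is that an algebraic relation at a single point of $\mathbb{C}^{4n}$ is not obstructed by Ax-Schanuel, which only controls relations holding on positive-dimensional subvarieties. The natural route is a Grothendieck--André style period argument, interpreting algebraic relations among the $j^{(k)}(z_i)$ as periods of a motive built from the elliptic curves $E_{z_i}$ and their isogenies, and invoking the generalised period conjecture. Alternatively, one could try an o-minimality plus counting approach (Pila--Wilkie), but as in the exponential setting this yields only the countability-of-counterexamples statement rather than MSC itself.

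The main obstacle, then, is exactly the last step: with currently available techniques there is no way to upgrade the functional statement to a pointwise arithmetic one outside of the small collection of cases covered by Nesterenko's theorem (which essentially handles a CM point together with $\pi$ and a related quasi-period, giving partial information at $n=1$). Therefore my ``proof'' is really a conditional reduction: assuming the generalised period conjecture of Grothendieck--André as applied to products of elliptic motives with distinct $j$-invariants, the Ax-Schanuel plus $\mathrm{gcl}$-independence argument of Steps one and two yields MSC; unconditionally, the best one can extract along these lines is the weaker ``all but countably many essential counterexamples'' statement already proved using $j\mathrm{cl}$, together with isolated arithmetic cases from Nesterenko. I expect that any honest complete proof of MSC would require a fundamentally new transcendence input beyond what Ax-Schanuel provides.
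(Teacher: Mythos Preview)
The statement you were asked to prove is a \emph{conjecture}, not a theorem: the paper states MSC as an open problem (a special case of the Grothendieck--Andr\'e generalised period conjecture) and does not attempt a proof. So there is no ``paper's own proof'' to compare against, and you correctly recognised midway through your proposal that no unconditional argument is available---your Step two names precisely the obstruction, namely that Ax--Schanuel controls only positive-dimensional algebraic relations and cannot by itself constrain a single arithmetic point.

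Your discussion of what partial results are within reach (conditional reduction to the period conjecture, countability of essential counterexamples via $j\mathrm{cl}$, isolated cases from Nesterenko) is accurate and matches what the paper establishes in the section on essential counterexamples. One small correction to your Step one: the specialization trick ``apply Ax--Schanuel to $z_i+t$ and set $t=0$'' does not actually recover anything pointwise---it just reproduces the functional statement, since setting $t=0$ after the fact loses the rank condition on $(\partial_k z_i)$ that Ax--Schanuel requires. This is not a fixable technicality; it is exactly the gap between the functional and arithmetic statements that you identify in Step two. In short, your proposal is not a proof, but it is a correct diagnosis of why MSC remains open, which is the honest thing to say here.
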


Following the notation we have been using so far, whenever we write $\mathrm{t.d.}(A)$, we mean the transcendence degree of the field $\mathbb{Q}(A)$ over $\mathbb{Q}$. The main ingredient for our transcendence results is the following.

\begin{thm}[Ax-Schanuel for $j$, see Theorem $1.3$ in \cite{pila-tsimerman}]
\label{thm:ax-schanuel-j}
Let $K$ be a characterstic zero differential field with $m$ commuting derivations $\partial_{k}$. Let $C = \bigcap_{k}\ker\partial_{k}$. Let $z_{i}, j_{i}, j'_{i}, j''_{i}, j'''_{i}\in K^{\ast}$, $i=1,\ldots,n$, be such that
\begin{equation*}
\partial_{k}j_{i} = j'_{i}\partial_{k}z_{i},\qquad \partial_{k}j'_{i} = j''_{i}\partial_{k}z_{i},\qquad \partial_{k}j''_{i}=j_{i}'''\partial_{k}z_{i}
\end{equation*}
for all $i$ and $k$. Suppose further that $\daleth(j_{i}, j'_{i}, j''_{i}, j'''_{i})=0$ for $i=1,\ldots,n$, where $\daleth$ is given by equation (\ref{eq:j}). Suppose also that for every modular polynomial $\Phi_{N}$ we have $\Phi_{N}(j_{i},j_{k})\neq 0$ for all $i$, $k$, $N$ and $j_{i}\notin C$ for all $i$. Then:
\begin{equation*}
\mathrm{t.d.}(z_{1},j_{1},j'_{1},j''_{1},\ldots,z_{n},j_{n},j'_{n},j''_{n}/C)\geq 3n + \mathrm{rank}(\partial_{k}z_{i})_{i,k}.
\end{equation*}
\end{thm}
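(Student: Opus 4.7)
The plan is to follow the template of Ax's original transcendence proof in the exponential case, but to replace the exponential Lindemann input with the Modular Ax-Lindemann-Weierstrass (ALW) theorem already recalled in the Preliminaries. First I would reduce to the analytic setting: passing to a finitely generated differential subfield containing all the data and extending derivations, one can embed $K$ into the field of meromorphic functions on a small polydisc $U \subseteq \mathbb{C}^m$ so that the commuting $\partial_k$ become the coordinate derivations and $C$ embeds into $\mathbb{C}$. The hypotheses $\partial_k j_i = j_i'\partial_k z_i$, $\partial_k j_i' = j_i''\partial_k z_i$, $\partial_k j_i'' = j_i'''\partial_k z_i$, together with $\daleth(j_i, j_i', j_i'', j_i''') = 0$, then force the germs $(j_i, j_i', j_i'', j_i''')$ to coincide with $(j, j', j'', j''') \circ z_i$ on an open subset of $U$ (after translating $z_i$ into $\mathbb{H}^+ \cup \mathbb{H}^-$ via the $\mathrm{GL}_2(\mathbb{Q})$-action if necessary). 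The hypothesis $\Phi_N(j_i, j_k) \neq 0$ for all $N$ and $i \neq k$, combined with $j_i \notin C$, then translates into geodesic independence and non-constancy of the analytic functions $z_1, \ldots, z_n$ on $U$.

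Set $d := 3n + \mathrm{rank}(\partial_k z_i)_{i,k}$ and assume for contradiction that $\mathrm{t.d.}(z_1, j_1, j_1', j_1'', \ldots, z_n, j_n, j_n', j_n''/C) < d$. Let $V$ be the Zariski closure over $C$ of the image of the map $U \to \mathbb{A}^{4n}_C$ sending $u \mapsto (z_i(u), j_i(u), j_i'(u), j_i''(u))_i$, so $\dim V < d$. Writing $\Gamma \subseteq \mathbb{C}^4$ for the graph of $z \mapsto (z, j(z), j'(z), j''(z))$, the analytic image of $U$ sits inside $V \cap \Gamma^n$, and the goal is to derive a contradiction from an unexpectedly large component of this intersection.

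The heart of the argument is a functional transcendence statement of Ax-Schanuel type, which I would attempt to establish along Pila-Tsimerman lines. The map $z \mapsto (z, j(z), j'(z), j''(z))$ restricted to a standard fundamental domain $\mathcal{F}$ is definable in $\mathbb{R}_{\mathrm{an},\exp}$, so $V \cap \Gamma^n$ admits a real-analytic description inside an o-minimal structure. A Pila-Wilkie-style point-counting argument applied to the rational Hecke correspondences should then produce, from the assumed excess component, a positive-dimensional algebraic subvariety of $V$ whose analytic preimage is stabilised coordinatewise by non-trivial subgroups of $\mathrm{SL}_2(\mathbb{R})$. Applying the Modular ALW theorem to such a stabilised family yields either a relation $z_i = g z_k$ with $g \in \mathrm{GL}_2^+(\mathbb{Q})$, contradicting $\Phi_N(j_i, j_k) \neq 0$, or a constant coordinate $z_i \in C$, forcing $j_i \in C$ via the differential equations and contradicting the remaining hypothesis.

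The main obstacle is precisely the quantitative step that upgrades the Modular ALW bound of $3n$ to the full Ax-Schanuel bound $3n + \mathrm{rank}(\partial_k z_i)$. The extra contribution encodes the number of genuinely independent analytic parameters among the $z_i$ viewed as functions on $U$, and accounting for it against the dimension of the weakly special subvariety produced in the previous step requires a careful monodromy argument: one must exhibit a suitably large algebraic monodromy group acting on the family in question. This is where the combination of hyperbolic geometry of the modular curve, definability in $\mathbb{R}_{\mathrm{an},\exp}$, and Pila-Wilkie counting in Pila-Tsimerman's original approach appears to be unavoidable, and I would expect any alternative route to require comparable ingredients.
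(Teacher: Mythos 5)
This statement is cited in the paper directly from Pila--Tsimerman (\cite{pila-tsimerman}) and is nowhere proved in the present paper; it is treated throughout as an external black box whose truth is simply invoked. Your proposal is therefore not comparable to any argument the paper actually contains; it is a reconstruction of the Pila--Tsimerman proof, which is itself a substantial separate paper.

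As a reconstruction, your sketch captures the broad architecture correctly: a Seidenberg-type embedding to pass to meromorphic functions on a polydisc, the observation that the chain-rule hypotheses together with $\daleth=0$ force the tuples $(j_i, j'_i, j''_i, j'''_i)$ to be germs of the genuine modular data composed with analytic branches $z_i$, definability of $j$ on a fundamental domain in $\mathbb{R}_{\mathrm{an},\exp}$, and a Pila--Wilkie counting argument aimed at manufacturing a forbidden weakly special subvariety. But you also concede the decisive step: going from the $3n$ lower bound (which is essentially Modular ALW with derivatives, already Pila's theorem, and cited independently in the Preliminaries) to the full $3n + \mathrm{rank}(\partial_k z_i)_{i,k}$ of Ax--Schanuel is left entirely open. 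That is not a minor gap; it is the content of the theorem. Your suggestion that the extra rank term comes from an ``algebraic monodromy'' argument is also not how Pila--Tsimerman close it: their proof proceeds via a stabilizer-and-degree analysis of the component of $V\cap\Gamma^n$, combined with a Pila--Wilkie count over the family of $\mathrm{SL}_2(\mathbb{Z})$-translates of the fundamental domain meeting the analytic germ, with hyperbolic volume estimates controlling the count. Monodromy-group arguments in the Hodge-theoretic sense appear in later generalizations of Ax--Schanuel to Shimura varieties and variations of Hodge structure, but they are not the mechanism in \cite{pila-tsimerman}. So the sketch is directionally sensible but unargued at precisely the point that matters, and the stated mechanism at that point does not match the actual proof.
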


This theorem is the first instance where we actually need the explicit shape of the differential equation (\ref{eq:diffj}) and that of the modular polynomials. Up until now, we have only used that they exist, some smoothness properties, and that they are defined over $\mathbb{Q}$. 

\subsection{Consequences of Ax-Schanuel for $j$}

\begin{prop}
\label{prop:as}
Let $(K,D)$ be a $j$-field. Let $z_{1},\ldots,z_{n}\in D$ and let $C\subseteq K$ be $j\mathrm{cl}$-closed. Then:
\begin{equation*}
\mathrm{t.d.}(\overline{z},j(\overline{z}),j'(\overline{z}),j''(\overline{z})/C) \geq 3\dim^{g}(\overline{z}/C) + \dim^{j}(\overline{z}/C).
\end{equation*}
\end{prop}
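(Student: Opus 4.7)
The plan is to apply the Ax-Schanuel theorem for $j$ (Theorem \ref{thm:ax-schanuel-j}) after reducing to a $\mathrm{gcl}$-independent tuple and constructing an appropriate system of $j$-derivations. Write $d = \dim^g(\overline{z}/C)$ and $e = \dim^j(\overline{z}/C)$; the target inequality is $\mathrm{t.d.}(\overline{z}, j(\overline{z}), j'(\overline{z}), j''(\overline{z})/C) \geq 3d + e$.

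First I would reduce to the case $n = d$. If $z_k = gx$ for some $g \in G$ and $x \in C \cup \{z_1, \ldots, z_{k-1}\}$, then either $x \in C$, in which case $z_k \in \mathbb{Q}(x) \subseteq C$ and $j^{(t)}(z_k) \in C$ for $t \leq 2$ by Proposition \ref{prop:jcl}, or $x = z_i$ for some $i < k$, in which case axiom \ref{ax:modpoly1} and its derivative forms make $j(z_k), j'(z_k), j''(z_k)$ algebraic over $\mathbb{Q}(z_i, j(z_i), j'(z_i), j''(z_i))$ while $z_k \in \mathbb{Q}(z_i)$. In either case removing $z_k$ preserves $\mathrm{t.d.}(\overline{z}, j(\overline{z}), j'(\overline{z}), j''(\overline{z})/C)$, $\dim^g(\overline{z}/C)$, and $\dim^j(\overline{z}/C)$. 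After iterating this reduction, $\overline{z}$ is $\mathrm{gcl}$-independent over $C$ and $n = d$. Then $\Phi_N(j(z_i), j(z_k)) \neq 0$ for all $N \geq 1$ and $i \neq k$, and each $z_l$ lies outside $C = j\mathrm{cl}(C)$, hence is non-special, so axiom \ref{ax:kernel} yields $j'(z_l) \neq 0$ and $j(z_l) \notin \{0, 1728\}$.

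Relabeling so that $z_1, \ldots, z_e$ form a $j\mathrm{cl}$-basis of $\overline{z}$ over $C$, the $j$-derivation version of Lemma \ref{lem:linearlem} produces $\partial_1, \ldots, \partial_e \in j\mathrm{Der}(K/C)$ with $\partial_i(z_k) = \delta_{ik}$ for $i, k \leq e$; since its first $e$ columns form the identity, the matrix $(\partial_i z_l)_{i \leq e,\, l \leq n}$ has rank exactly $e$. Setting $C_{\mathrm{AS}} := \bigcap_i \ker \partial_i \supseteq C$, I would check $j(z_l) \notin C_{\mathrm{AS}}$ for every $l$: since $z_l \notin j\mathrm{cl}(C)$, $d_j z_l \neq 0$ in $\Xi(K/C)$, and since $z_l \in j\mathrm{cl}(C, z_1, \ldots, z_e)$, duality puts $d_j z_l$ in the $K$-span of $d_j z_1, \ldots, d_j z_e$; the expansion $d_j z_l = \sum_i \partial_i(z_l) \, d_j z_i$ must then have some nonzero $\partial_i(z_l)$, so $\partial_i(j(z_l)) = j'(z_l)\partial_i(z_l) \neq 0$.

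The key technical hurdle is Ax-Schanuel's commuting-derivations hypothesis. For this I would show that $[\partial_i, \partial_k]$ is itself a $j$-derivation (a direct computation from the $j$-derivation axioms and the Leibniz rule) and vanishes on $C \cup \{z_1, \ldots, z_e\}$, since $[\partial_i, \partial_k](z_l) = \partial_i(\delta_{kl}) - \partial_k(\delta_{il}) = 0$ for $l \leq e$. Hence $[\partial_i, \partial_k]$ vanishes on all of $j\mathrm{cl}(C, z_1, \ldots, z_e)$, which by Proposition \ref{prop:jcl} and axiom \ref{ax:diffj} contains every $z_l$ together with $j(z_l), j'(z_l), j''(z_l), j'''(z_l)$. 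After extending $\partial_1, \ldots, \partial_e$ to a differential field in which they commute (possible because commutation already holds on the subfield generated by all elements appearing in Ax-Schanuel), the theorem applies and yields
\[ \mathrm{t.d.}(\overline{z}, j(\overline{z}), j'(\overline{z}), j''(\overline{z})/C_{\mathrm{AS}}) \geq 3n + e. \]
Since $C \subseteq C_{\mathrm{AS}}$ implies $\mathrm{t.d.}(\cdot/C) \geq \mathrm{t.d.}(\cdot/C_{\mathrm{AS}})$, the desired bound $3d + e$ follows.
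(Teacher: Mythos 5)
Your overall strategy matches the paper's: reduce to the interesting part of the tuple, construct a corresponding system of $j$-derivations $\partial_1,\ldots,\partial_e$, observe that the commutators vanish on $j\mathrm{cl}(C,z_1,\ldots,z_e)$ so the $\partial_i$ restrict to commuting derivations there, apply Theorem \ref{thm:ax-schanuel-j}, and pass to $C$. Your duality argument that some $\partial_i(z_l)\neq 0$, hence $\partial_i(j(z_l))=j'(z_l)\partial_i(z_l)\neq 0$, is a clean and more direct way to get $j(z_l)$ out of the constant field than the paper's route, which instead first proves $j'''(z_l)\neq 0$ and then appeals to Proposition \ref{prop:jcl}(b). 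The upfront reduction to a $\mathrm{gcl}$-independent tuple is also fine (one only needs that removal of a $\mathrm{gcl}$-dependent $z_k$ cannot increase $\mathrm{t.d.}$ while leaving $\dim^g$ and $\dim^j$ unchanged, which is what actually happens).

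There is, however, a genuine gap. Theorem \ref{thm:ax-schanuel-j} requires $z_i,j_i,j'_i,j''_i,j'''_i\in K^{\ast}$, and you only verify $j(z_l)\notin\{0,1728\}$ and $j'(z_l)\neq 0$ via axiom \ref{ax:kernel}. That axiom says nothing about $j''(z_l)$ or $j'''(z_l)$, and you do not address them. The paper devotes a separate argument to $j'''(z_l)\neq 0$: if $j'''(z_l)=0$ then $j''(z_l)$ is a constant, one integrates the $j$-derivation relations to write $j'(z_l)$ and $j(z_l)$ as explicit polynomials in $z_l$ over the constant field, substitutes into $\daleth=0$, and concludes $z_l$ is algebraic over the constants, contradicting $z_l\notin B$. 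You need this (or an equivalent) step. A secondary point: after restricting the $\partial_i$ to $L:=j\mathrm{cl}(C,z_1,\ldots,z_e)$, the constant field of that differential field is $L\cap C_{\mathrm{AS}}$, not $C_{\mathrm{AS}}$, so Ax-Schanuel gives the bound over $L\cap C_{\mathrm{AS}}$ rather than over $C_{\mathrm{AS}}$ as you wrote; since $L\cap C_{\mathrm{AS}}\subseteq C_{\mathrm{AS}}$, your intermediate inequality over $C_{\mathrm{AS}}$ is not what the theorem delivers, but because $C\subseteq L\cap C_{\mathrm{AS}}$ the final bound over $C$ is still correct once the step is stated correctly. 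Relatedly, ``extending $\partial_1,\ldots,\partial_e$ to a differential field in which they commute'' should be ``restricting to''; to restrict one also has to check $\partial_i(L)\subseteq L$, which the paper argues and you leave implicit.
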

\begin{proof}
 The strategy of the proof is first to reduce the problem to a (possibly) different $j$-field, and then check conditions on the new $j$-field to see that we can use the Ax-Schanuel Theorem for $j$. Let $m=\dim^{j}(\overline{z}/C)$. By renumbering, we can assume that $z_{1},\ldots,z_{m}$ are $j\mathrm{cl}$-independent over $C$. Also, by renumbering and using the fact that $j\mathrm{cl}$-independence implies $\mathrm{gcl}$-independence, we can assume that $z_{1},\ldots,z_{r}$ are $\mathrm{gcl}$-independent over $C$, with $r = \dim^{g}(\overline{z}/C)$. Note that $m\leq r$. As in Lemma \ref{lem:linearlem}, we can find a corresponding system of $j$-derivations $\partial_{1},\ldots,\partial_{m}\in j\mathrm{Der}(K/C)$. Thus $m=\mathrm{rank}(\partial_{k}(z_{i}))_{i,k=1,\ldots,r}$.
 
 For $m<i\leq r$ we have that $z_{i}\in j\mathrm{cl}(z_{1},\ldots,z_{m},C)$, but $z_{i}\notin C$ (because $C$ is $j\mathrm{cl}$-closed). By the exchange property $z_{1}\in j\mathrm{cl}(z_{2},\ldots,z_{m},z_{i},C)$. As $\partial_{1}(z_{1}) =1$, then we must necessarily have $\partial_{1}(z_{i})\neq 0$. So in fact, $\partial_{k}(z_{i})\neq 0$ for every $k=1,\ldots,m$ and $i=m+1,\ldots, r$.  
 
 Let $F = j\mathrm{cl}(z_{1},\ldots,z_{n},C)$. We know that $F$ is a $j$-field. We want to show that the $j$-derivations $\partial_{1},\ldots,\partial_{m}$ can be restricted to $F$. Consider $\partial_{1}$. Clearly $\partial_{1}$ can be restricted to $C$, because $\partial_{1}(C)=0$ and $0\in C$. As a field, $C(z_{1},\ldots,z_{m})$ is isomorphic to the field of rational functions on $C$ with $m$ variables. Then $\partial_{1}$ acts on $C(z_{1},\ldots,z_{m})$ as partial derivation with respect to $z_{1}$, so it can also be restricted to $C(z_{1},\ldots,z_{m})$. Let $x\in F$ be algebraic over $C(z_{1},\ldots,z_{m})$, and let $p(x)=0$ be an algebraic equation with coefficients in $C(z_{1},\ldots,z_{m})$ satisfied by $x$. Applying $\partial_{1}$ to $p(x)=0$ shows that $\partial_{1}(x)\in C(z_{1},\ldots,z_{m},x)$. Finally, suppose that $z\in D_{F}$ is such that $\partial_{1}(z)\in F$. Then, as $F$ is a $j$-field, the axioms of $j$-derivations say that $\partial_{1}(j(z)),\partial_{1}(j'(z)),\partial_{1}(j''(z))\in F$. Therefore, $\partial_{1}$ can be restricted to a $j$-derivation on $F$. The most important thing about this is that, as $z_{i}\in F$, for $m<i\leq r$, then $\partial_{1}(z_{i})\in F$. This means that $z_{1},\ldots,z_{m}$ are still $j\mathrm{cl}$-independent over $C$ in $F$. Also, $z_{1},\ldots,z_{r}$ are clearly $\mathrm{gcl}$-independent over $C$ in $F$. 
 
 So, in what remains of the proof, we will work in the $j$-field $F$ while preserving the notation we have already set. We now show that in $F$ we have the conditions necessary for applying the Ax-Schanuel Theorem for $j$.
 
 Let $B = \bigcap_{k=1}^{m}\ker(\partial_{k})$ (remember that the kernels are taken inside $F$). Then $z_{i}\notin B$, for $i=1,\ldots,r$. Note that $C\subseteq B$ and that $z_{1},\ldots,z_{m}$ are still $j\mathrm{cl}$-independent over $B$. Given $i$ and $k$, consider the $j$-derivation $\partial = \partial_{i}\partial_{k} - \partial_{k}\partial_{i}$. Consider $\partial$ as a $j$-derivation on $K$. As $\partial$ vanishes on $C\cup\left\{z_{1},\ldots,z_{m}\right\}$, then for any $x\in K$ such that $x\in j\mathrm{cl}(z_{1},\ldots,z_{m},C)$, we must have that $\partial(x)=0$. In other words, $\partial\equiv 0$ on $F$. This means that the $j$-derivations $\partial_{1},\ldots,\partial_{m}$ commute on $F$.
 
 Let $j_{i} = j(z_{i})$, $j'_{i}=j'(z_{i})$, $j''_{i}=j''(z_{i})$ and $j'''_{i} = j'''(z_{i})$, for $i=1,\ldots,r$. Now we verify the conditions on Theorem \ref{thm:ax-schanuel-j}. Note that the system of differential equations $\partial_{k}(j_{i}) = j'_{i}\partial_{k}(z_{i})$, $\partial_{k}(j'_{i}) = j''_{i}\partial_{k}(z_{i})$, $\partial_{k}(j''_{i}) = j'''_{i}\partial_{k}(z_{i})$, for all $i=1,\ldots,r$, $k=1,\ldots,m$ is satisfied by the axioms of $j$-derivations. By the axioms of $j$-fields we have that $\daleth(j_{i}, j'_{i}, j''_{i}, j'''_{i})=0$, $i=1,\ldots,r$ (where $\daleth$ is given by equation (\ref{eq:j}). Note that $B$ contains all special points, so $z_{i}$ is not special and $\daleth$ is well defined at every $z_{i}$, $i=1,\ldots,r$. By $\mathrm{gcl}$-independence we have that $\Phi_{N}(j_{i},j_{k})\neq 0$ for all $1\leq i,k\leq r$,and all $N$.
 
 By Proposition \ref{prop:jcl}, if $j'''_{i}\neq 0$ for some $i$, then $j_{i}\notin B$ and $z_{i},j_{i},j'_{i},j''_{i},j'''_{i}\in F^{\ast}$ (because $0\in B$). So now we show $j'''_{i}\neq0$ for  $i=1,\ldots,r$. If $j'''_{k}=0$ for some $1\leq k\leq r$, then for every $j$-derivation $\partial$ we have that $\partial(j''_{k})=0$. So $j''_{k}\in B$. As $\partial(j'_{k}) = j''_{k}\partial(z_{k})$, then $j'_{k} = j''_{k}z_{k}+b_{0}$, for some $b_{0}\in B$. Therefore $\partial(j_{k}) = (j''_{k}z_{k}+b_{0})\partial(z_{k}) = \partial(j''_{k}z_{k}^{2}/2+b_{0}z_{k})$, which means there is $b_{1}\in B$ such that $j_{k} = j''_{k}z_{k}^{2}/2 + b_{0}z_{k}+b_{1}$. Plugging this values into the equation $\daleth(j_{k}, j'_{k}, j''_{k}, j'''_{k})=0$, we conclude that $z_{k}$ is algebraic over $B$, but as $B$ is algebraically closed, this would imply that $z_{k}\in B$, which is impossible for $1\leq k\leq r$.
 
 We can therefore apply Theorem \ref{thm:ax-schanuel-j} to obtain:
\begin{equation*}
\mathrm{t.d.}(z_{1},j_{1},j'_{1},j''_{1},\ldots,z_{r},j_{r},j'_{r},j''_{r}/B)\geq 3r + m.
\end{equation*}
Now use the general fact that if $A\subseteq A'$ and $C\subseteq B$, then $\mathrm{t.d.}(A'/C)\geq\mathrm{t.d.}(A/B)$. 
\end{proof}

The following lemma will allow us to obtain a weaker version of our main result, but the proof uses a different method. The argument presented here is based on ideas of Alex Wilkie for treating the exponential function, communicated to the author. 

\begin{lem}
\label{lem:weak}
Let $(K,D)$ be a $j$-field and $\partial$ be a $j$-derivation on $K$. Let $m\geq 1$, $d\geq 0$ and set $C=\ker\partial$. Suppose that:
\begin{enumerate}
    \item $z_{1}\ldots,z_{m}\in D$ are $\mathrm{gcl}$-independent, 
    \item There is $\tau\in K$ such that $\partial(\tau)=1$,
    \item $d = \dim^{g}_{F}(z_{1},\ldots,z_{m})$, where $F=\mathbb{Q}(\tau)$. 
\end{enumerate}
Then:
\begin{equation*}
    \mathrm{t.d.}(j(\overline{z}),j'(\overline{z}),j''(\overline{z}))\geq 3m-3d.
\end{equation*}
\end{lem}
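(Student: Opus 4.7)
My strategy is to apply the Ax-Schanuel theorem for $j$ (Theorem \ref{thm:ax-schanuel-j}) with the single derivation $\partial$, and to combine the resulting transcendence lower bound with an upper bound on $\mathrm{t.d.}(\overline{z}/C)$ forced by the $\mathrm{gcl}_F$-relations.

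For the upper bound, I would first renumber so that $z_1,\ldots,z_d$ are $\mathrm{gcl}_F$-independent and, for each $d<i\leq m$, there exist $g_i\in G^F=\mathrm{GL}_2(\mathbb{Q}(\tau))$ and $k_i\in\{1,\ldots,d\}$ with $z_i=g_i z_{k_i}$. Since the entries of each $g_i$ lie in $\mathbb{Q}(\tau)$, this gives $\overline{z}\subseteq \mathbb{Q}(\tau,z_1,\ldots,z_d)$. Because $\partial\tau=1$ and $\ker\partial=C$, any $f\in\mathbb{Q}(\tau)\cap C$ satisfies $df/d\tau=0$ and so is constant; hence $C\cap\mathbb{Q}(\tau)=\mathbb{Q}$ and $\mathrm{t.d.}(\tau/C)=1$. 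Combining these observations yields $\mathrm{t.d.}(\overline{z}/C)\leq d+1$.

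For the lower bound, I would apply Ax-Schanuel to $z_1,\ldots,z_m$ with the derivation $\partial$, following the template of Proposition \ref{prop:as}. The hypotheses are verified as follows: $\mathrm{gcl}$-independence gives $\Phi_N(j(z_i),j(z_k))\neq 0$ for all $i\neq k$ and all $N$; the conditions $j(z_i),\,j(z_i)-1728,\,j'(z_i),\,j'''(z_i)\neq 0$ reduce to checking that no $z_i$ is special (via axioms \ref{ax:diffj} and \ref{ax:kernel}, together with the fact that $j'''(z_i)=0$ would force $z_i$ algebraic over $C$, as in Proposition \ref{prop:as}); and $j(z_i)\notin C$ follows because $j(z_i)\in C$ combined with the $j$-derivation axiom would give $j'(z_i)\partial(z_i)=0$, forcing either $z_i\in C$ or $z_i$ special. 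Genuinely degenerate $z_i$ (special, or with $z_i\in C$) are separated out and handled along the lines of Proposition \ref{prop:as}: special points have $j,j',j''$ values algebraic over $\mathbb{Q}$ and contribute nothing to tr.deg over $\mathbb{Q}$, and the remaining $z_i\in C$ cases are absorbed by $j\mathrm{cl}$-bookkeeping. Writing $T=(j(\overline{z}),j'(\overline{z}),j''(\overline{z}))$, Ax-Schanuel then yields
\[
\mathrm{t.d.}(\overline{z},T/C)\geq 3m+\mathrm{rank}(\partial z_i)_{i}\geq 3m.
\]

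Subtracting the upper bound from the lower one gives $\mathrm{t.d.}(T/C)\geq 3m-(d+1)$, and since tr.deg over the smaller base $\mathbb{Q}$ is at least tr.deg over $C$, I conclude $\mathrm{t.d.}(T)\geq 3m-d-1$. Because $m\geq 1$ forces $d\geq 1$, we have $3m-d-1\geq 3m-3d$, as required. The main obstacle is the careful treatment of degenerate configurations in the Ax-Schanuel step so that the lower bound $3m$ survives; this is expected to follow the same pattern as in Proposition \ref{prop:as}, where discarded points either contribute nothing to transcendence over $\mathbb{Q}$ or are absorbed via the $j$-subfield generated by $\tau$ and $\overline{z}$.
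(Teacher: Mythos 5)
Your overall strategy — an upper bound on $\mathrm{t.d.}(\overline{z}/C)$ from the $\mathrm{gcl}_{F}$-relations, a lower bound from Ax-Schanuel via Proposition \ref{prop:as}, and then combining — is the same as the paper's, but the bookkeeping does not close in the degenerate case, and this is a real gap.

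The problem is the claimed lower bound $\mathrm{t.d.}(\overline{z},T/C)\geq 3m$. Nothing in the hypotheses prevents some of the $z_{i}$ from lying in $C=\ker\partial$ (this set is $j\mathrm{cl}$-closed, hence $\mathrm{gcl}$-closed, and contains all special points since those lie in $j\mathrm{cl}(\emptyset)$). Say $p$ of them do. For those $i$ one has $j(z_{i}),j'(z_{i}),j''(z_{i})\in C$ by Proposition \ref{prop:jcl}, so the hypothesis $j_{i}\notin C$ of Theorem \ref{thm:ax-schanuel-j} fails and those points must be discarded before applying Ax-Schanuel; the best Proposition \ref{prop:as} gives for the remaining tuple $\widetilde{z}=(z_{p+1},\ldots,z_{m})$ is $\mathrm{t.d.}(\widetilde{z},j(\widetilde{z}),j'(\widetilde{z}),j''(\widetilde{z})/C)\geq 3(m-p)+1$, not $3m$. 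Combined with your upper bound $\mathrm{t.d.}(\overline{z}/C)\leq d+1$ this yields only $\mathrm{t.d.}(T/C)\geq 3m-3p-d$, and $3m-3p-d\geq 3m-3d$ requires $2d\geq 3p$, which does not follow from $p\leq d$ (take $p=d=1$). Your attempt to dismiss the discarded points by asserting that special $z_{i}$ have $j,j',j''$ algebraic over $\mathbb{Q}$ is also factually wrong: by the result of Diaz quoted in the paper, $j'(\tau)$ is transcendental for non-degenerate special $\tau$, so the discarded coordinates do contribute to $\mathrm{t.d.}$ over $\mathbb{Q}$; they just do not contribute over $C$, which is why one should bound $\mathrm{t.d.}(T/C)$ rather than $\mathrm{t.d.}(T)$ directly.

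What is missing is exactly the refinement the paper uses. One needs Lemma \ref{lem:disjoint1} to get $F\bot^{g}_{\mathbb{Q}}C$, hence that the $p$ points in $C$ stay $\mathrm{gcl}_{F}$-independent and can be completed to a $\mathrm{gcl}_{F}$-basis $z_{1},\ldots,z_{d}$; then the sharper upper bound $\mathrm{t.d.}(\widetilde{z}/C)\leq 1+(d-p)$ holds (the first $p$ basis elements lie in $C$ and cost nothing), and one gets $\mathrm{t.d.}(T/C)\geq 3(m-p)+1-(1+d-p)=3m-2p-d\geq 3m-3d$ precisely because $p\leq d$. Without the disjointness lemma and the $-p$ refinement your inequality does not reach $3m-3d$ when $p>0$.
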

\begin{proof}
  Given that if $d\geq m$ then the result is trivial, we assume that $d<m$. Renumbering if necessary, let $0\leq p\leq m$ be maximal so that $z_{1},\ldots,z_{p}\in C$. Note that by Lemma \ref{lem:disjoint1} $F\bot^{g}_{\mathbb{Q}}C$, so $z_{1},\ldots,z_{p}$ are $\mathrm{gcl}_{F}$-independent. Therefore $p\leq d$. 
  
  Now, by renumbering again, we can assume that $z_{1},\ldots,z_{p},z_{p+1},\ldots,z_{d}$ is an $\mathrm{gcl}_{F}$-basis for $z_{1},\ldots,z_{m}$. Then:
  \begin{eqnarray*}
  \mathrm{t.d.}(z_{p+1},\ldots,z_{m}/C)&\leq&\mathrm{t.d.}(z_{p+1},\ldots,z_{m},\tau/C)\\
  &=& \mathrm{t.d.}(\tau/C) + \mathrm{t.d.}(z_{p+1},\ldots,z_{m}/C,\tau)\\
  &\leq& 1 + d-p.
  \end{eqnarray*}
  
  Let $\widetilde{z} = (z_{p+1},\ldots,z_{m})$. Suppose that the Lemma is false: $\mathrm{t.d.}(j(\widetilde{z}),j'(\widetilde{z}),j''(\widetilde{z})/C)\leq 3m-3d-1$. Then it is also true that: $\mathrm{t.d.}(j(\widetilde{z}),j'(\widetilde{z}),j''(\widetilde{z})/C,\widetilde{z})\leq 3m-3d-1$.
  Therefore:
  \begin{eqnarray*}
      \mathrm{t.d.}(\widetilde{z},j(\widetilde{z}),j'(\widetilde{z}),j''(\widetilde{z})/C) &=& \mathrm{t.d.}(\widetilde{z}/C) + \mathrm{t.d.}(j(\widetilde{z}),j'(\widetilde{z}),j''(\widetilde{z})/C,\widetilde{z})\\
      &\leq& (1+d-p) + (3m-3d-1) = 3m-p-2d.
  \end{eqnarray*}
  However, under the hypothesis of the Lemma, we should have by the Ax-Schanuel theorem for $j$ (or by Proposition \ref{prop:as}) that:
  \begin{equation*}
      \mathrm{t.d.}(\widetilde{z},j(\widetilde{z}),j'(\widetilde{z}),j''(\widetilde{z})/C)\geq 3m-3p+1,
  \end{equation*}
  which would mean that $3m-3p+1 \leq 3m-p-2d$, or equivalently $2d+1\leq 2p$. This contradicts that $p\leq d$ (which was already established). 
\end{proof}

For the following Corollary recall Remark \ref{rem:countable}, which ensures the existence of $\tau$. 

\begin{cor}[Weak version of the main result]
\label{cor:weak}
Choose $\tau\in\mathbb{R}$ so that $\tau\notin j\mathrm{cl}(\emptyset)$. Suppose $z_{1},\ldots,z_{n}\in\mathbb{H}$ and $g\in G(\mathbb{Q}(\tau))$ are such that $z_{1},\ldots,z_{n},gz_{1},\ldots,gz_{n}$ are $\mathrm{gcl}$-independent. Then:
\begin{equation*}
    \mathrm{t.d.}(j(\overline{z}),j(g\overline{z}),j'(\overline{z}),j'(g\overline{z}),j''(\overline{z}),j''(g\overline{z}))\geq 3n.
\end{equation*}
\end{cor}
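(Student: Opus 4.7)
The plan is to apply Lemma \ref{lem:weak} to the length-$2n$ tuple $\overline{w} = (z_1,\dots,z_n,gz_1,\dots,gz_n)$, with $m=2n$ and $F=\mathbb{Q}(\tau)$. By hypothesis $\overline{w}$ is $\mathrm{gcl}$-independent, so condition (1) of that lemma is satisfied. Because $\tau\notin j\mathrm{cl}(\emptyset)$, the singleton $\{\tau\}$ is $j\mathrm{cl}$-independent, so by the remark immediately following Lemma \ref{lem:linearlem} there is a corresponding system of $j$-derivations for $\tau$, i.e.\ a $j$-derivation $\partial\in j\mathrm{Der}(\mathbb{C})$ with $\partial(\tau)=1$. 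This provides condition (2). Set $C=\ker\partial$.

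It remains to bound $d:=\dim^g_F(\overline{w})$ where $F=\mathbb{Q}(\tau)$. Here the key observation is simply that $g\in G^F$ by hypothesis, so for each $i$ the pair $z_i$ and $gz_i$ lie in the same $G^F$-orbit, hence in the same $\mathrm{gcl}_F$-class. Consequently
\begin{equation*}
d = \dim^g_F(z_1,\dots,z_n,gz_1,\dots,gz_n) \;\leq\; \dim^g_F(z_1,\dots,z_n) \;\leq\; n.
\end{equation*}

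Now Lemma \ref{lem:weak} yields
\begin{equation*}
\mathrm{t.d.}\bigl(j(\overline{w}),j'(\overline{w}),j''(\overline{w})\bigr) \;\geq\; 3m - 3d \;\geq\; 6n - 3n \;=\; 3n,
\end{equation*}
and since $j(\overline{w})=(j(\overline{z}),j(g\overline{z}))$ and likewise for $j',j''$, this is exactly the inequality claimed. The whole proof is essentially a bookkeeping exercise; the only thing worth checking is the step that translates ``$g\in\mathrm{GL}_2(\mathbb{Q}(\tau))$'' into ``$\dim^g_F\leq n$'', which is immediate from the definition of $\mathrm{gcl}_F$, so there is no genuine obstacle. (One may also note the implicit use of the identification of $\mathbb{H}^+\cup\mathbb{H}^-$ with $D$ inside the $j$-field $(\mathbb{C},\mathbb{H}^+\cup\mathbb{H}^-)$, together with the existence of $\tau$ guaranteed by Remark \ref{rem:countable}.)
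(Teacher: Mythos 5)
Your proposal is correct and takes essentially the same approach as the paper's proof: apply Lemma \ref{lem:weak} with $m=2n$, produce a $j$-derivation $\partial$ with $\partial(\tau)=1$, and observe that $d=\dim^g_{\mathbb{Q}(\tau)}(\overline{z},g\overline{z})\leq n$ because each $gz_i$ is in the same $\mathrm{gcl}_{\mathbb{Q}(\tau)}$-orbit as $z_i$. The only cosmetic difference is that the paper invokes Proposition \ref{prop:jderonr} for the existence of $\partial$, while you obtain it directly from $\tau\notin j\mathrm{cl}(\emptyset)$ (equivalently, from the discussion following Lemma \ref{lem:linearlem}); both routes are valid.
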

\begin{proof}
  We use Lemma \ref{lem:weak} with $m=2n$. Proposition \ref{prop:jderonr} constructs a $j$-derivation $\partial$ satisfying $\partial(\tau)=1$. Note also that in this case $d \leq n$. So $3m-3d \geq 6n - 3n = 3n$. 
\end{proof}

\subsection{Transcendence Results}

Proposition \ref{prop:as} can be rephrased as follows (which, even though a more convoluted statement, will be better for obtaining the main theorem).

\begin{cor}
\label{cor:reform}
Let $(K,D)$ be a $j$-field and $C\subseteq K$ be $j\mathrm{cl}$-closed. Let $\tau_{1},\ldots,\tau_{m}\in D$ be such that they are  $j\mathrm{cl}$-independent over $C$. Then for any $z_{1},\ldots,z_{n}\in D$:
\begin{equation*}
\mathrm{t.d.}(\overline{\tau},\overline{z},j(\overline{\tau}), j(\overline{z}), j'(\overline{\tau}), j'(\overline{z}), j''(\overline{\tau}), j''(\overline{z})/C) \geq 3\dim^{g}(\overline{\tau},\overline{z}/C) + m.
\end{equation*}
\end{cor}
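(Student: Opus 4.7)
The plan is to obtain Corollary \ref{cor:reform} as a direct consequence of Proposition \ref{prop:as} applied to the concatenated tuple $(\overline{\tau}, \overline{z})$, together with monotonicity of $j\mathrm{cl}$-dimension.

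First I would invoke Proposition \ref{prop:as} on the single tuple consisting of all the $\tau_i$'s followed by all the $z_k$'s (all of which lie in $D$), relative to the $j\mathrm{cl}$-closed set $C$. This gives
\begin{equation*}
\mathrm{t.d.}(\overline{\tau},\overline{z},j(\overline{\tau}),j(\overline{z}),j'(\overline{\tau}),j'(\overline{z}),j''(\overline{\tau}),j''(\overline{z})/C) \geq 3\dim^{g}(\overline{\tau},\overline{z}/C) + \dim^{j}(\overline{\tau},\overline{z}/C).
\end{equation*}
The first term on the right is already exactly what we need, so it only remains to bound the $j\mathrm{cl}$-dimension term from below by $m$.

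For that, I would use the fact that $j\mathrm{cl}$ is a pregeometry (established in Lemmas \ref{lem:basic}, \ref{lem:finchar}, \ref{lem:exchange}), so dimension is monotone in the set of elements: $\dim^{j}(\overline{\tau},\overline{z}/C) \geq \dim^{j}(\overline{\tau}/C)$. By hypothesis the tuple $\tau_1,\ldots,\tau_m$ is $j\mathrm{cl}$-independent over $C$, which means $\dim^{j}(\overline{\tau}/C) = m$. Combining these yields the desired inequality.

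Since the argument is essentially a single substitution plus monotonicity, there is no real obstacle here; the only thing to be careful about is simply recognising that Proposition \ref{prop:as} applies uniformly to any finite tuple from $D$, and that we are free to choose the concatenation $(\overline{\tau},\overline{z})$ as that tuple. In particular, we do not need the $z_k$'s to be $j\mathrm{cl}$-independent from the $\tau_i$'s or among themselves: any additional independent elements among the $z_k$'s would only make the bound stronger, but the weakest bound of $m$ coming from the $\tau_i$'s alone already suffices.
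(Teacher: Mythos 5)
Your proof is correct and is exactly the route the paper intends: the Corollary is stated as a rephrasing of Proposition \ref{prop:as}, obtained by applying that proposition to the concatenated tuple $(\overline{\tau},\overline{z})$ and then using monotonicity of $\dim^{j}$ together with the hypothesis $\dim^{j}(\overline{\tau}/C)=m$.
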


Next, we need a couple of technical results.

\begin{prop}
\label{prop:moreresults}
Let $(K,D)$ be a $j$-field, $C\subseteq K$ $j\mathrm{cl}$-closed. Suppose $\tau_{1},\ldots,\tau_{m}\in D$ are  $j\mathrm{cl}$-independent over $C$. Then for any $z_{1},\ldots,z_{n}\in D$:
\begin{enumerate}[(a)]
\item \begin{equation*}
    \mathrm{t.d.}(j(\overline{\tau}), j'(\overline{\tau}), j''(\overline{\tau}) /C,\overline{\tau},\overline{z}, j(\overline{z}), j'(\overline{z}),j''(\overline{z}))\leq 3\dim^{g}(\overline{\tau}/C,\overline{z}).
\end{equation*}
\item \begin{equation*}
    \mathrm{t.d.}(\overline{z}/C,\overline{\tau})\leq\dim_{\mathbb{Q}(\overline{\tau})}^{g}(\overline{z}/C).
\end{equation*}
\end{enumerate}
\end{prop}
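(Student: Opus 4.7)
The plan for both parts is to exploit axiom \ref{ax:modpoly1} of $j$-fields: a $\mathrm{gcl}$-relation between two points of $D$ gives, via a modular polynomial and its first and second derived equations, algebraic dependencies among the corresponding triples $(j,j',j'')$. Once suitable $\mathrm{gcl}$-bases are fixed, both bounds should fall out by counting.

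For (a), I would set $d = \dim^g(\overline{\tau}/C,\overline{z})$ and, after renumbering, take $\tau_1,\ldots,\tau_d$ to be a $\mathrm{gcl}$-basis of $\overline{\tau}$ over $C\cup\overline{z}$. For each $i>d$, write $\tau_i = g_i y_i$ with $g_i\in G$ and $y_i \in C\cup\{\tau_1,\ldots,\tau_d\}\cup\{z_1,\ldots,z_n\}$. The subcase $y_i\in C$ cannot occur: since $C$ is $j\mathrm{cl}$-closed it is a field containing $\mathbb{Q}$, and $g_i$ has rational entries, so $\tau_i = g_iy_i$ would lie in $C$, contradicting the $j\mathrm{cl}$-independence of $\overline{\tau}$ over $C$. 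Hence $y_i\in\{\tau_1,\ldots,\tau_d\}\cup\overline{z}$, and axiom \ref{ax:modpoly1} (together with its two derived equations) ties $j(\tau_i),j'(\tau_i),j''(\tau_i)$ algebraically to $\tau_i, y_i, j(y_i),j'(y_i),j''(y_i)$ and the (rational) entries of $g_i$. Setting $E = \mathbb{Q}(C,\overline{\tau},\overline{z}, j(\overline{z}),j'(\overline{z}),j''(\overline{z}))$, each triple at $\tau_i$ with $i>d$ is algebraic over $E$ extended by the triples at $\tau_1,\ldots,\tau_d$. So the transcendence degree of $j(\overline{\tau}),j'(\overline{\tau}),j''(\overline{\tau})$ over $E$ is at most $3d$, as required.

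For (b), I would set $e = \dim^g_{\mathbb{Q}(\overline{\tau})}(\overline{z}/C)$ and, after renumbering, take $z_1,\ldots,z_e$ to be a $\mathrm{gcl}_{\mathbb{Q}(\overline{\tau})}$-basis of $\overline{z}$ over $C$. For each $l>e$ there are $h_l \in G^{\mathbb{Q}(\overline{\tau})}$ and $y_l\in C\cup\{z_1,\ldots,z_e\}$ with $z_l = h_l y_l$. Since the entries of $h_l$ lie in $\mathbb{Q}(\overline{\tau})$ and $y_l \in \mathbb{Q}(C,\overline{\tau},z_1,\ldots,z_e)$, the element $z_l$ is rational over $\mathbb{Q}(C,\overline{\tau},z_1,\ldots,z_e)$. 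Thus $\mathrm{t.d.}(\overline{z}/C,\overline{\tau}) = \mathrm{t.d.}(z_1,\ldots,z_e/C,\overline{\tau})\leq e$.

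The one delicate step is in (a): verifying that the \emph{derived} modular relations genuinely yield algebraic dependence of $j'(\tau_i)$ and $j''(\tau_i)$, not merely of $j(\tau_i)$. At any point where the modular curve $\Phi_N = 0$ is smooth one can solve for these derivatives by using whichever of $\Phi_{N,1},\Phi_{N,2}$ is nonzero, exactly as in the construction of $j$-derivations on $\mathbb{C}$ earlier in the paper; combined with the ODE of axiom \ref{ax:diffj} this supplies the algebraic control needed to finish the counting argument.
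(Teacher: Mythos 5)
Your proof is correct and takes essentially the same route as the paper's: for (a), pass to a $\mathrm{gcl}$-basis of $\overline{\tau}$ over $C\cup\overline{z}$, use $j\mathrm{cl}$-closedness of $C$ (and $j\mathrm{cl}$-independence of $\overline{\tau}$) to see that each dependent $\tau_i$ must be $G$-related to some $z_l$, and then use $\Phi_N(j(\tau_i),j(z_l))=0$ together with its first and second derived forms to show each such triple $(j(\tau_i),j'(\tau_i),j''(\tau_i))$ is algebraic over $E$; part (b) is the same Möbius-relation count in both. The one cosmetic difference is that you read the derived modular relations straight off axiom (4) of $j$-fields, while the paper regenerates them by picking a $j$-derivation $\partial$ with $\partial(z_l)=1$ and applying it to $\Phi_N(j(\tau_i),j(z_l))=0$; the nonvanishing concern you flag (one needs $\Phi_{N,1}(j(\tau_i),j(z_l))\neq0$, or a substitute argument, to solve for $j'(\tau_i)$) is present in both versions and is likewise left implicit in the paper.
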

\begin{proof}
\begin{enumerate}[(a)]
    \item Suppose that $\tau_{1}\in \mathrm{gcl}(\tau_{2},\ldots,\tau_{m},\overline{z},C)$. Then, by $j\mathrm{cl}$-independence, we have that $\tau_{1} = gz_{i}$, for some $z_{i}$ and some $g\in G$. This says that $\tau_{1}\in\mathbb{Q}(z_{i})$. Also, for some $N$, we have that $\Phi_{N}(j(\tau_{1}),j(z_{i}))=0$, which says that $j(\tau_{1})$ is algebraic over $\mathbb{Q}(j(z_{i}))$. 
    
    Now we consider $j$-derivations. Given that $C$ contains the field of constants and that $z_{i}\notin C$ (because  $\tau_{1} = gz_{i}$), we can assume that there is a $j$-derivation $\partial$ such that $\partial(z_{i})\neq 0$. So we can assume that $\partial(z_{i})=1$. From the equality: 
    \begin{equation}
    \label{eq:partial1}
    \partial(\tau_{1}) = \frac{ad-bc}{(cz_{i}+d)^{2}}\partial(z_{i})
    \end{equation}
     (where $a$, $b$, $c$ and $d$ are the parameters of $g$ in the usual way), we get that $\partial(\tau_{1})$ is algebraic over $\mathbb{Q}(z_{i})$. Also:
    \begin{equation}
    \label{eq:partial2}
         0= \partial\Phi_{N}(j(\tau_{1}),j(z_{i})) = \Phi_{N,1}(j(\tau_{1}),j(z_{i}))j'(\tau_{1})\partial(\tau_{1}) + \Phi_{N,2}(j(\tau_{1}),j(z_{i}))j'(z_{i})\partial(z_{i}),
    \end{equation}
    where $\Phi_{N,1}$ and $\Phi_{N,2}$ are the derivatives of $\Phi_{N}$ with respect to the first and second coordinates respectively. We deduce that $j'(\tau_{1})$ is algebraic over $\mathbb{Q}(j(\tau_{1}), j(z_{i}),  j'(z_{i}), \tau_{1},z_{i})$.
    
    Finally, if we apply once more $\partial$ to equations (\ref{eq:partial1}) and (\ref{eq:partial2}), we obtain that $j''(\tau_{i})$ is algebraic over $\mathbb{Q}(j(\tau_{1}), j(z_{i}), j'(\tau_{1}),  j'(z_{i}), j''(z_{i}), \tau_{1}, z_{i})$.
    \item If $z_{1}\in \mathrm{gcl}_{\mathbb{Q}(\overline{\tau})}(z_{2},\ldots,z_{n},C)$, then suppose that for some $g\in G(\mathbb{Q}(\overline{\tau}))$ and some $x\in\left\{z_{2},\ldots,z_{n}\right\}\cup C$ we have that $z_{1} = gx$. This equality can be translated into a polynomial relation of the form: $(cx+d)z_{1} = ax+b$ (where $a$, $b$, $c$, and $d$ are the usual parameters of $g$), meaning that $z_{1}$ is algebraic over $\mathbb{Q}(a,b,c,d,x)$. Given that $a$, $b$, $c$, and $d$ are in $\mathbb{Q}(\overline{\tau})$, we obtain the desired result. 
\end{enumerate}
\end{proof}

With this proposition we can now obtain the following theorem.

\begin{thm}
\label{thm:gdim}
Let $(K,D)$ be a $j$-field and $C\subseteq K$ be $j\mathrm{cl}$-closed. Let $\tau_{1},\ldots,\tau_{m}\in D$ be such that they are  $j\mathrm{cl}$-independent over $C$. Then for any $z_{1},\ldots,z_{n}\in D$:
\begin{equation*}
    \mathrm{t.d.}(j(\overline{z}),j'(\overline{z}),j''(\overline{z})/C,\overline{\tau},\overline{z}) + \dim_{\mathbb{Q}(\overline{\tau})}^{g}(\overline{z}/C) - 3\dim^{g}(\overline{z}/C)\geq 0.
\end{equation*}
\end{thm}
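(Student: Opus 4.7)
The plan is to combine Corollary \ref{cor:reform} applied to the concatenated tuple $(\overline{\tau},\overline{z})$ with Proposition \ref{prop:moreresults}, using additivity of transcendence degree and of $\dim^{g}$ to isolate the desired quantity.

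First I would invoke Corollary \ref{cor:reform} on $(\overline{\tau},\overline{z})$ over $C$ to obtain
\begin{equation*}
  \mathrm{t.d.}(\overline{\tau},\overline{z},j(\overline{\tau}),j(\overline{z}),j'(\overline{\tau}),j'(\overline{z}),j''(\overline{\tau}),j''(\overline{z})/C) \geq 3\dim^{g}(\overline{\tau},\overline{z}/C) + m.
\end{equation*}
Then I would decompose the left-hand side by transcendence-degree additivity along the tower
\begin{equation*}
  C \subseteq C(\overline{\tau}) \subseteq C(\overline{\tau},\overline{z}) \subseteq C(\overline{\tau},\overline{z}, j(\overline{z}),j'(\overline{z}),j''(\overline{z})) \subseteq C(\overline{\tau},\overline{z}, j(\overline{z}),j'(\overline{z}),j''(\overline{z}), j(\overline{\tau}),j'(\overline{\tau}),j''(\overline{\tau})),
\end{equation*}
obtaining four summands: $\mathrm{t.d.}(\overline{\tau}/C)$, $\mathrm{t.d.}(\overline{z}/C,\overline{\tau})$, the target term $\mathrm{t.d.}(j(\overline{z}),j'(\overline{z}),j''(\overline{z})/C,\overline{\tau},\overline{z})$, and a final term $\mathrm{t.d.}(j(\overline{\tau}),j'(\overline{\tau}),j''(\overline{\tau})/C,\overline{\tau},\overline{z},j(\overline{z}),j'(\overline{z}),j''(\overline{z}))$.

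Next I would bound the two pieces that involve $\overline{\tau}$ from above. The first of these is trivially $\leq m$, and the last one is $\leq 3\dim^{g}(\overline{\tau}/C,\overline{z})$ by Proposition \ref{prop:moreresults}(a). Plugging these upper bounds into the decomposition and comparing with the lower bound from Corollary \ref{cor:reform}, the $m$'s cancel and we are left with
\begin{equation*}
  \mathrm{t.d.}(\overline{z}/C,\overline{\tau}) + \mathrm{t.d.}(j(\overline{z}),j'(\overline{z}),j''(\overline{z})/C,\overline{\tau},\overline{z}) \geq 3\bigl[\dim^{g}(\overline{\tau},\overline{z}/C) - \dim^{g}(\overline{\tau}/C,\overline{z})\bigr].
\end{equation*}
By the additivity formula for the pregeometry $\mathrm{gcl}$, the bracketed quantity equals $\dim^{g}(\overline{z}/C)$.

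Finally I would apply Proposition \ref{prop:moreresults}(b), which says $\mathrm{t.d.}(\overline{z}/C,\overline{\tau}) \leq \dim^{g}_{\mathbb{Q}(\overline{\tau})}(\overline{z}/C)$, to replace the transcendence-degree term by the geodesic dimension; this gives exactly the desired inequality after rearrangement. There is no real obstacle here once the correct decomposition is chosen: the whole argument is a careful bookkeeping exercise. The only subtlety is making sure the additivity of $\dim^{g}$ is applied to the right pair (separating $\overline{\tau}$ from $\overline{z}$ rather than the reverse), since that is what makes $\dim^{g}(\overline{z}/C)$ appear with the correct sign.
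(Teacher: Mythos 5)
Your proof is correct and follows essentially the same route as the paper's: apply Corollary \ref{cor:reform} to the concatenated tuple, expand by additivity of transcendence degree and of $\dim^{g}$, bound the $\overline{\tau}$-terms by $m$ (the paper uses $\mathrm{t.d.}(\overline{\tau}/C)=m$ exactly, which holds since $j\mathrm{cl}$-independence implies algebraic independence, but your $\leq m$ suffices) and by Proposition \ref{prop:moreresults}(a), then finish with Proposition \ref{prop:moreresults}(b). The bookkeeping matches the paper's proof step for step.
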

\begin{proof}
From Corollary \ref{cor:reform} we get:
\begin{equation*}
\mathrm{t.d.}(\overline{\tau},\overline{z},j(\overline{\tau}), j(\overline{z}), j'(\overline{\tau}), j'(\overline{z}), j''(\overline{\tau}), j''(\overline{z})/C) \geq 3\dim^{g}(\overline{\tau},\overline{z}/C) + m.
\end{equation*}
Expanding using the addition formula we get:
\begin{eqnarray*}
    &&\mathrm{t.d.}(\overline{\tau}/C) + \mathrm{t.d.}(\overline{z}/C,\overline{\tau})+ \mathrm{t.d.}(j(\overline{z}),j'(\overline{z}),j''(\overline{z})/C,\overline{\tau},\overline{z})\\
    &&\qquad +\mathrm{t.d.}(j(\overline{\tau}), j'(\overline{\tau}), j''(\overline{\tau}) /C,\overline{\tau},\overline{z}, j(\overline{z}), j'(\overline{z}),j''(\overline{z})) 
     - 3\dim^{g}(\overline{\tau}/C,\overline{z})\\
     && \qquad- 3\dim^{g}(\overline{z}/C)\geq m.
\end{eqnarray*}
Using that $\mathrm{t.d.}(\overline{\tau}/C) = m$ and both inequalities from Proposition \ref{prop:moreresults}, we obtain the statement of the theorem.
\end{proof}

\begin{cor}
\label{cor:ineq}
Let $(K,D)$ be a $j$-field and $C\subseteq K$ be $j\mathrm{cl}$-closed. Let $\tau_{1},\ldots,\tau_{m}\in D$ be such that they are  $j\mathrm{cl}$-independent over $C$. Let $A$ be a subset of $C$. Then for any $z_{1},\ldots,z_{n}\in D$: 
\begin{equation*}
    \mathrm{t.d.}(j(\overline{z}),j'(\overline{z}),j''(\overline{z})/C,\overline{\tau},\overline{z}) + 3\dim_{\mathbb{Q}(\overline{\tau})}^{g}(\overline{z}/A) - 3\dim^{g}(\overline{z}/A)\geq 2\dim^{g}_{\mathbb{Q}(\overline{\tau})}(\overline{z}/C).
\end{equation*}
\end{cor}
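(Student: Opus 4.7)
The plan is to derive Corollary \ref{cor:ineq} by combining Theorem \ref{thm:gdim} with Lemma \ref{lem:disjoint2}, the latter being used to convert geodesic dimensions measured over $C$ into those measured over the smaller set $A$. The transcendence-degree content is already packaged in Theorem \ref{thm:gdim}; what Corollary \ref{cor:ineq} adds is essentially a reshuffling of the two geodesic dimensions to reference $A$ instead of $C$, and this reshuffling is exactly what Lemma \ref{lem:disjoint2} is designed to do.

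First I would verify that $\mathbb{Q}(\overline{\tau})\bot^{g}_{\mathbb{Q}}C$, so that Lemma \ref{lem:disjoint2} can be applied with $E=\mathbb{Q}$, $F=\mathbb{Q}(\overline{\tau})$ and $L=C$. This is where Lemma \ref{lem:disjoint1} enters, with $E=\mathbb{Q}$, $L=C$ and $\overline{x}=\overline{\tau}$: the subfield $\mathbb{Q}$ is active since $G^{\mathbb{Q}}=G$; $\mathbb{Q}\subseteq j\mathrm{cl}(\emptyset)\subseteq C$ because $C$ is $j\mathrm{cl}$-closed; and algebraic independence of $\overline{\tau}$ over $C$ follows from the chain
\begin{equation*}
\dim^{j}(K/C)\leq\dim_{K}\Xi(K/C)\leq\dim_{K}\Omega(K/C)=\mathrm{t.d.}(K/C)
\end{equation*}
recorded at the end of Section \ref{sec:jder}, together with the hypothesis that $\overline{\tau}$ is $j\mathrm{cl}$-independent over $C$. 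The remaining hypothesis of Lemma \ref{lem:disjoint1}, namely the existence of non-scalar elements in $G^{\mathbb{Q}(\overline{\tau})}\setminus G$, is the one genuinely non-formal point; in the main setting $K=\mathbb{C}$ it is automatic whenever $\overline{\tau}$ is real, because then $\mathbb{Q}(\overline{\tau})$ is active.

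With disjointness secured, Lemma \ref{lem:disjoint2} applied to the tuple $\overline{z}$ and the set $A\subseteq C$ yields
\begin{equation*}
\dim^{g}_{\mathbb{Q}(\overline{\tau})}(\overline{z}/C)-\dim^{g}(\overline{z}/C)\leq \dim^{g}_{\mathbb{Q}(\overline{\tau})}(\overline{z}/A)-\dim^{g}(\overline{z}/A).
\end{equation*}
Multiplying by $3$ and adding the resulting inequality to Theorem \ref{thm:gdim} rewritten as
\begin{equation*}
\mathrm{t.d.}(j(\overline{z}),j'(\overline{z}),j''(\overline{z})/C,\overline{\tau},\overline{z})\geq 3\dim^{g}(\overline{z}/C)-\dim^{g}_{\mathbb{Q}(\overline{\tau})}(\overline{z}/C),
\end{equation*}
the $\pm 3\dim^{g}(\overline{z}/C)$ terms cancel, and the coefficients of $\dim^{g}_{\mathbb{Q}(\overline{\tau})}(\overline{z}/C)$ combine to $-2$; moving this contribution to the right-hand side produces exactly $2\dim^{g}_{\mathbb{Q}(\overline{\tau})}(\overline{z}/C)$, which is the target. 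The main obstacle throughout is the verification of $\mathbb{Q}(\overline{\tau})\bot^{g}_{\mathbb{Q}}C$; once Lemma \ref{lem:disjoint1} delivers it, the remainder of the proof is a direct linear combination of two inequalities.
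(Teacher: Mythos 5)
Your proposal is correct and follows essentially the same route as the paper: both deduce the corollary from Theorem \ref{thm:gdim} together with Lemma \ref{lem:disjoint2}, with the geodesic disjointness $\mathbb{Q}(\overline{\tau})\bot^{g}_{\mathbb{Q}}C$ supplied by Lemma \ref{lem:disjoint1} after noting that $j\mathrm{cl}$-independence over $C$ implies algebraic independence over $C$. The only cosmetic difference is that you rearrange the two inequalities and add them, whereas the paper first adds $2\dim^{g}_{\mathbb{Q}(\overline{\tau})}(\overline{z}/C)$ to both sides of Theorem \ref{thm:gdim} and then invokes Lemma \ref{lem:disjoint2} to pass from $C$ to $A$; these are the same linear combination.
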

\begin{proof}
By the previous theorem we get:
\begin{equation*}
    \mathrm{t.d.}(j(\overline{z}),j'(\overline{z}),j''(\overline{z})/C,\overline{\tau},\overline{z}) + \dim_{\mathbb{Q}(\overline{\tau})}^{g}(\overline{z}/C) - 3\dim^{g}(\overline{z}/C)\geq 0.
\end{equation*}
So we also get:
\begin{equation*}
    \mathrm{t.d.}(j(\overline{z}),j'(\overline{z}),j''(\overline{z})/C,\overline{\tau},\overline{z}) + 3\dim_{\mathbb{Q}(\overline{\tau})}^{g}(\overline{z}/C) - 3\dim^{g}(\overline{z}/C)\geq 2\dim^{g}_{\mathbb{Q}(\overline{\tau})}(\overline{z}/C).
\end{equation*}

If $\overline{\tau}$ is $j\mathrm{cl}$-independent over $C$, then it is algebraically independent over $C$. So, by Lemma \ref{lem:disjoint1}, we get that $\mathbb{Q}(\overline{\tau})$ is geodesically disjoint from $C$ over $\mathbb{Q}$. Now we can use Lemma \ref{lem:disjoint2}. 
\end{proof}

Now we prove our main theorem.

\begin{thm}[General Main Theorem]
\label{thm:main}
Let $(K,D)$ be a $j$-field. Let $\tau_{1},\ldots,\tau_{m},z_{1},\ldots,z_{n}\in D$ be such that $\overline{\tau}$ is $j\mathrm{cl}$-independent and $\overline{z}$ is $\mathrm{gcl}$-independent. Let $F = \mathbb{Q}(\overline{\tau})$. Suppose for each $1\leq i\leq m$, $g_{i}\in G^{\mathbb{Q}(\tau_{i})}$ is not of the form $ah$, where $a\in F$ and $h\in G$. Let $\partial_{1},\ldots,\partial_{m}$ be a corresponding system of $j$-derivations for $\overline{\tau}$ and set $C=\cap_{i}\ker\partial_{i}$. Suppose any of the following conditions is satisfied:
\begin{enumerate}[(a)]
\item The set $\left\{\bar{z}, g_{1}\bar{z},\ldots, g_{m}\bar{z}\right\}$ is $\mathrm{gcl}$-independent.
\item $\bar{z}\subseteq C$ and each $z_{i}$ is non-special.
\end{enumerate}
Then: 
\begin{equation*}
    \mathrm{t.d.}\left(j(\bar{z}),j'(\bar{z}),j''(\bar{z}),j(\bar{g}\bar{z}),j'(\bar{g}\bar{z}),j''(\bar{g}\bar{z})/F,C,\bar{z}\right)\geq 3nm.
\end{equation*}
\end{thm}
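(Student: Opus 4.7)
The plan is to reduce the theorem to a single application of Corollary~\ref{cor:ineq} applied to the $n(m+1)$-tuple $W := (\bar{z}, g_1 \bar{z}, \ldots, g_m \bar{z})$. First, $W$ lies in $D$ because each $g_i \in G^{\mathbb{Q}(\tau_i)} \subseteq G^F$ stabilises $D$, and each $g_i z_k$ is a M\"obius transformation of $z_k$ with coefficients in $\mathbb{Q}(\tau_i) \subseteq F$, hence algebraic over $F(\bar{z})$. This identifies the block ``$C, \bar\tau, W$'' appearing in Corollary~\ref{cor:ineq} with ``$F, C, \bar{z}$'', so taking $A = \emptyset$ yields
\begin{equation*}
\mathrm{t.d.}(j(W), j'(W), j''(W)/F, C, \bar{z}) \geq 2\dim_F^g(W/C) + 3\dim^g(W) - 3\dim_F^g(W).
\end{equation*}
The dimension count is then uniform across the two cases: every $g_i z_k$ lies in the $G^F$-orbit of $z_k$, so $\dim_F^g(W) = \dim_F^g(\bar{z}) \leq n$ and $\dim_F^g(W/C) \geq 0$. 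If one can show $\dim^g(W) = n(m+1)$, the right-hand side is bounded below by $3n(m+1) - 3n = 3nm$, as desired. In case (a) this $\mathrm{gcl}$-independence of $W$ is exactly the hypothesis, and we are done.

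For case (b), I plan to prove $W$ is $\mathrm{gcl}$-independent, reducing to case (a). Apply Lemma~\ref{lem:disjoint1} with $E = \mathbb{Q}$, $L = C$, and $\bar{x} = \bar\tau$: the hypotheses hold because $\bar\tau$ is $j\mathrm{cl}$-independent over $C$ (since $\partial_k(\tau_k) = 1$) and so algebraically independent over $C$, and each $g_i$ serves as a non-scalar element of $G^F \setminus G$ (neither $g_i$ scalar nor $g_i \in G$ is compatible with $g_i \notin F\cdot G$). The lemma then yields $F \bot^g_\mathbb{Q} C$, which combined with $\bar{z} \subseteq C$ gives $\mathrm{gcl}_F$-independence of $\bar{z}$, and gives that each $z_k$ is non-special over $F$ (via the contrapositive of part (b) of the lemma). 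Now suppose for contradiction that $g_i z_k = h g_j z_\ell$ with $(i,k) \neq (j,\ell)$, adopting the convention $g_0 := I$ to cover the $\bar{z}$ entries. If $k \neq \ell$ then $z_\ell \in \mathrm{gcl}_F(z_k)$, contradicting $\mathrm{gcl}_F$-independence of $\bar{z}$. If $k = \ell$ and $i \neq j$, then $g_j^{-1} h^{-1} g_i$ fixes the non-$F$-special point $z_k$, so equals some scalar $\mu I$ with $\mu \in F$, giving the matrix identity $g_i = \mu h g_j$.

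The key arithmetic step is then to compare entries: those of $g_i$ lie in $\mathbb{Q}(\tau_i)$ while those of $h g_j$ lie in $\mathbb{Q}(\tau_j)$, so pairwise ratios of entries lie in $\mathbb{Q}(\tau_i) \cap \mathbb{Q}(\tau_j) = \mathbb{Q}$ (using algebraic independence of $\bar\tau$ over $\mathbb{Q}$). This forces $h g_j = \lambda h'$ for some $\lambda \in \mathbb{Q}(\tau_j) \subseteq F$ and some $h' \in G$, hence $g_j = \lambda (h^{-1}h') \in F \cdot G$, contradicting the hypothesis on $g_j$; when $j = 0$ we directly obtain $g_i \in F\cdot G$, again contradicting the hypothesis. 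Thus $W$ is $\mathrm{gcl}$-independent and case~(a) applies.

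The main obstacle I anticipate is precisely this matrix-entry argument in case~(b): the individual hypothesis ``$g_j \notin F\cdot G$'' does not at first glance forbid a pairwise coincidence of the form $g_i z_k = h g_j z_k$, and one must exploit the fact that each $g_j$ lies in $\mathrm{GL}_2(\mathbb{Q}(\tau_j))$ rather than in the full $\mathrm{GL}_2(F)$, so that algebraic independence of $\bar\tau$ over $\mathbb{Q}$ converts the pairwise coincidence into a violation of the individual hypothesis on $g_j$.
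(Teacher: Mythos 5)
Your proposal follows the paper's route exactly: apply Corollary~\ref{cor:ineq} with $A = \emptyset$ to the tuple $W = (\bar z, \bar g\bar z)$, use that $\dim_F^g(W) = \dim_F^g(\bar z) \leq n$ and that $\mathbb{Q}(C,\bar\tau,W) = \mathbb{Q}(F,C,\bar z)$, and in case~(b) reduce to case~(a) by showing $W$ is $\mathrm{gcl}$-independent via Lemma~\ref{lem:disjoint1}. Where you go beyond the paper is in the verification that $W$ is $\mathrm{gcl}$-independent in case~(b): the paper's argument only explicitly addresses $F$-speciality of the $z_k$, leaving implicit the need to rule out coincidences $g_i z_k = h g_j z_k$ with $i \neq j$ and $h \in G$. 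Your entry-ratio argument --- that such a coincidence forces $g_i = \mu\, h g_j$ for a scalar $\mu \in F$, whence the ratios of nonzero entries of $g_j$ lie in $\mathbb{Q}(\tau_i) \cap \mathbb{Q}(\tau_j) = \mathbb{Q}$ (by algebraic independence of $\bar\tau$, itself a consequence of $j\mathrm{cl}$-independence), putting $g_j$ in $F\cdot G$ --- is exactly the missing step that makes the individual hypotheses on the $g_i$ do their work, and it is correct.
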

\begin{proof}
Using that $\dim_{F}^{g}(\bar{z},\bar{g}\bar{z}) = \dim^{g}_{F}(\bar{z})$ and Corollary \ref{cor:ineq} (with $A=\emptyset$), we get:
\begin{equation*}
    \mathrm{t.d.}\left(j(\bar{z}),j'(\bar{z}),j''(\bar{z}),j(\bar{g}\bar{z}),j'(\bar{g}\bar{z}),j''(\bar{g}\bar{z})/F,C,\bar{z}\right) \geq 3\dim^{g}(\bar{z},\bar{g}\bar{z})-  3\dim_{F}^{g}(\bar{z})+2\dim^{g}_{F}(\bar{z}/C).
\end{equation*}
\begin{enumerate}
\item The result follows immediately as
\begin{equation*}
    \mathrm{t.d.}\left(j(\bar{z}),j'(\bar{z}),j''(\bar{z}),j(\bar{g}\bar{z}),j'(\bar{g}\bar{z}),j''(\bar{g}\bar{z})/F,C,\bar{z}\right) \geq 3n(m+1) - 3n = 3nm.
\end{equation*}
\item In this case, we need to show that $\dim^{g}(\bar{z},\bar{g}\bar{z})\geq\dim^{g}_{F}(\bar{z})+n$. But using Lemma \ref{lem:disjoint1} we know that $F\bot^{g}_{\mathbb{Q}}C$. So $\dim^{g}_{F}(\bar{z})=n$, which means that, unless some $z_{i}$ is $F$-special, then the points $\bar{z},\bar{g}\bar{z}$ are $\mathrm{gcl}$-independent. However if any $z_{i}$ is $F$-special, then by Lemma \ref{lem:disjoint1}, they are $\mathbb{Q}$-special.
\end{enumerate} 
\end{proof}

There is a slight difference between this statement and that of Theorem \ref{thm:exp}. In the exponential case one only asks that $\exp(\overline{x})$ be multiplicatively independent, whereas in our theorem we ask (in the first case) that $\bar{z},\bar{g}\bar{z}$ be geodesically independent. This is because $j$ is not as ``nice'' as $\exp$. Suppose, for instance, that $n=m=1$ and that $z$ and $gz$ are in the same $G$-orbit (which would mean that $z$ is $F$-special). If $z\notin C$, then Corollary \ref{cor:ineq} can only give us the lower bound: 
\begin{equation*}
    \mathrm{t.d.}\left(j(z),j'(z),j''(z),j(gz),j'(gz),j''(gz)/F,C,z\right)\geq 2.
\end{equation*}
So the obstacle for proving a stronger result is in the $F$-special points. 

\subsection{Essential Counterexamples to MSC}
Now we relate our transcendence results to the modular Schanuel conjecture. 

\begin{defi}
Let $\overline{z}$ be a tuple in $D$ and let $B\subseteq K$. We define the \emph{predimension of $\overline{z}$ over $B$} to be:
\begin{equation*}
    \delta(\overline{z}/B) = \mathrm{t.d.}(\overline{z},j(\overline{z}),j'(\overline{z}),j''(\overline{z})/B,j(A),j'(A),j''(A)) - 3\dim^{g}(\overline{z}/B),
\end{equation*}
where $A= B\cap D$. When $B=\emptyset$ we write simply $\delta(\overline{z})$. By Proposition \ref{prop:as}, $\delta(\overline{z}/C)\geq\dim^{j}(\overline{z}/C)$.
\end{defi}

\begin{remark}
\label{rem:predim}
It is a simple exercise to see that for any pregeometry $\mathrm{cl}$ on a set $X$ we have that $\mathrm{cl}(A/B, C) = \mathrm{cl}(A, C/B) - \mathrm{cl}(C/B)$, where $A,B,C$ are any three subsets of $X$. Then, on a $j$-field $(K,D)$, for any finite $C\subseteq D$ we get that $\delta(\overline{z}/B, C) = \delta(\overline{z}, C/B) - \delta(C/B)$. So, if we choose $B=\emptyset$, we get that $\delta(A,C) = \delta(A/C)+\delta(C)$. Also note that for any finite $A\subseteq D$ we have that $\delta(A) = \delta(\mathrm{gcl}(A))$. 
\end{remark}

\begin{defi}
A tuple $\overline{z}$ from $D$ is called an \emph{essential counterexample} to the modular Schanuel conjecture if it does not satisfy inequality (\ref{eq:msc}) and for every tuple $\overline{c}$ from $\mathrm{gcl}(\overline{z})$ we have that $\delta(\overline{z})\leq\delta(\overline{c})$. 
\end{defi}

A counterexample of MSC is a tuple $\overline{z}$ of $D$ such that $\delta(\overline{z})<0$. On $\mathbb{C}$, using the relation given by the modular polynomials and their derivatives, we get that for any $z_{0}$ and any tuple $\overline{z}$ from $\mathbb{H}^{+}\cup\mathbb{H}^{-}$, $\delta(\overline{z},z_{0})\leq\delta(\overline{z})+1$. This means that if we had a counterexample $\overline{z}$ such that $\delta(\overline{z})<-1$, then we can produce $|\mathbb{C}|$ many counterexamples. Essential counterexamples can be used to classify families of counterexamples of MSC. Clearly, every counterexample of MSC has an essential counterexample in its $\mathrm{gcl}$-closure. 

\begin{prop}
 Let $(K,D)$ be a $j$-field. Let $\overline{z}\in D$ be such that for some $i$, $z_{i}\notin j\mathrm{cl}(\emptyset)$. Then $\overline{z}$ is not an essential counterexample. 
\end{prop}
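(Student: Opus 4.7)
The plan is to apply Proposition \ref{prop:as} over the $j\mathrm{cl}$-closed base $C = j\mathrm{cl}(\emptyset)$ and to exploit the fact that $j\mathrm{cl}(\emptyset)$ is $G$-invariant. Indeed, for any $j$-derivation $\partial$ and $g=\left(\begin{smallmatrix} a & b\\ c & d\end{smallmatrix}\right)\in G$ one has $\partial(gx)=\frac{ad-bc}{(cx+d)^{2}}\partial(x)$, so $\partial$ annihilates $x$ iff it annihilates $gx$. Because $\delta$ is invariant under passing to a $\mathrm{gcl}$-basis (Remark \ref{rem:predim}) and $G$-invariance of $j\mathrm{cl}(\emptyset)$ ensures the hypothesis persists on a basis, I may assume $\overline{z}$ is $\mathrm{gcl}$-independent. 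After renumbering, write $\overline{z}=(\overline{z}_{0},\overline{z}_{1})$, with $\overline{z}_{0}$ listing the $z_{i}\in j\mathrm{cl}(\emptyset)$ and $\overline{z}_{1}$ listing the rest; the hypothesis gives $|\overline{z}_{1}|\geq 1$.

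The key estimate to establish is $\delta(\overline{z})\geq \delta(\overline{z}_{0})+1$. Because $j\mathrm{cl}(\emptyset)$ is a $j$-subfield, Proposition \ref{prop:jcl}(a) gives $j(\overline{z}_{0}),j'(\overline{z}_{0}),j''(\overline{z}_{0})\subseteq j\mathrm{cl}(\emptyset)$, so additivity of transcendence degree yields
\begin{equation*}
\mathrm{t.d.}(\overline{z},j(\overline{z}),j'(\overline{z}),j''(\overline{z}))\geq \mathrm{t.d.}(\overline{z}_{0},j(\overline{z}_{0}),j'(\overline{z}_{0}),j''(\overline{z}_{0}))+\mathrm{t.d.}(\overline{z}_{1},j(\overline{z}_{1}),j'(\overline{z}_{1}),j''(\overline{z}_{1})/j\mathrm{cl}(\emptyset)).
\end{equation*}
To the second term I apply Proposition \ref{prop:as} (with $C=j\mathrm{cl}(\emptyset)$), which bounds it below by $3\dim^{g}(\overline{z}_{1}/j\mathrm{cl}(\emptyset))+\dim^{j}(\overline{z}_{1}/j\mathrm{cl}(\emptyset))$. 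Using $G$-invariance of $j\mathrm{cl}(\emptyset)$ and $\mathrm{gcl}$-independence of $\overline{z}$, the first summand equals $3|\overline{z}_{1}|$; and since every element of $\overline{z}_{1}$ lies outside $j\mathrm{cl}(\emptyset)$, the second is at least $1$. Subtracting $3|\overline{z}|=3|\overline{z}_{0}|+3|\overline{z}_{1}|$ from both sides gives the claimed inequality.

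To finish, I split cases on $\overline{z}_{0}$. If $\overline{z}_{0}$ is nonempty then it is a sub-tuple of $\overline{z}$, hence a tuple from $\mathrm{gcl}(\overline{z})$, and the strict inequality $\delta(\overline{z})>\delta(\overline{z}_{0})$ contradicts clause (ii) of the definition of essential counterexample. If $\overline{z}_{0}$ is empty then $\delta(\overline{z}_{0})=0$ and so $\delta(\overline{z})\geq 1>0$, contradicting clause (i). Either way $\overline{z}$ fails to be an essential counterexample. The main delicate points are the $G$-invariance of $j\mathrm{cl}(\emptyset)$ (so that $\dim^{g}(\overline{z}_{1}/j\mathrm{cl}(\emptyset))=|\overline{z}_{1}|$ is legitimate) and the observation that $\overline{z}_{1}\cap j\mathrm{cl}(\emptyset)=\emptyset$ forces $\dim^{j}(\overline{z}_{1}/j\mathrm{cl}(\emptyset))\geq 1$; once these are secured, the rest is bookkeeping with the addition formula for transcendence degree and Proposition \ref{prop:as}.
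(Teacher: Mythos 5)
Your proof is correct and follows essentially the same strategy as the paper's: apply Proposition \ref{prop:as} over the $j\mathrm{cl}$-closed base $j\mathrm{cl}(\emptyset)$, use the fact that $j\mathrm{cl}(\emptyset)$ is $\mathrm{gcl}$-closed (your $G$-invariance observation), and exhibit a tuple from $\mathrm{gcl}(\overline{z})\cap j\mathrm{cl}(\emptyset)$ whose predimension is strictly smaller than $\delta(\overline{z})$. The only cosmetic difference is that you first pass to a $\mathrm{gcl}$-basis and work with the explicit subtuple $\overline{z}_0$, whereas the paper works with the set $A=\mathrm{gcl}(\overline{z})\cap j\mathrm{cl}(\emptyset)$ and extracts a finite witness $\overline{b}$ via the addition formula for $\delta$.
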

\begin{proof}
Let $A = \mathrm{gcl}(\overline{z})\cap j\mathrm{cl}(\emptyset)$. By hypothesis, $A\neq \mathrm{gcl}(\overline{z})$. As $A\subseteq j\mathrm{cl}(\emptyset)$, then $j(A), j'(A), j''(A)\subseteq j\mathrm{cl}(\emptyset)$ by Proposition \ref{prop:jcl}, so:
\begin{equation*}
    \mathrm{t.d.}(\overline{z},j(\overline{z}),j'(\overline{z}),j''(\overline{z})/A, j(A), j'(A), j''(A))\geq \mathrm{t.d.}(\overline{z},j(\overline{z}),j'(\overline{z}),j''(\overline{z})/j\mathrm{cl}(\emptyset)).
\end{equation*}
Also $\dim^{g}(\overline{z}/A) = \dim^{g}(\overline{z}/j\mathrm{cl}(\emptyset))$, because $j\mathrm{cl}(\emptyset)$ is $\mathrm{gcl}$-closed. Using Proposition \ref{prop:as} we get:
\begin{equation*}
    \delta(\overline{z}/A)\geq\delta(\overline{z}/j\mathrm{cl}(\emptyset))\geq \dim^{j}(\overline{z}/j\mathrm{cl}(\emptyset))\geq 1.
\end{equation*}
Let $\overline{b}\in A$ be such that $\delta(\overline{z}/\overline{b}) = \delta(\overline{z}/A)$. Then $\delta(\overline{b}) = \delta(\overline{z},\overline{b}) - \delta(\overline{z}/\overline{b}) = \delta(\overline{z}) - \delta(\overline{z}/A) < \delta(\overline{z})$, so $\overline{z}$ is not an essential counterexample.
\end{proof}

\begin{cor}
By Remark \ref{rem:countable} we get that on $\mathbb{C}$ there are at most countably many essential counterexamples to the modular Schanuel conjecture. 
\end{cor}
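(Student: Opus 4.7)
The plan is to combine the preceding Proposition with Remark \ref{rem:countable} and then count. More precisely, the contrapositive of the Proposition says that if $\overline{z}\in D^n$ is an essential counterexample to MSC, then every entry $z_i$ must lie in $j\mathrm{cl}(\emptyset)$. So the set of essential counterexamples of length $n$ embeds into $(j\mathrm{cl}(\emptyset))^n$.

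Next, I would invoke Remark \ref{rem:countable}, which tells us that in the $j$-field $(\mathbb{C},\mathbb{H}^{+}\cup\mathbb{H}^{-})$ the set $j\mathrm{cl}(\emptyset)$ is countable (it is contained in the set of complex numbers whose real and imaginary parts lie in $\mathrm{dcl}_{\overline{\mathbb{R}}}(\emptyset)$, which is itself countable by o-minimality). Hence $(j\mathrm{cl}(\emptyset))^n$ is countable for each fixed $n$.

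Finally, the set of all essential counterexamples is the union over $n\in\mathbb{N}$ of the sets of essential counterexamples of length $n$, so it is a countable union of countable sets, and therefore countable. There is really no obstacle here; the work has already been done in the previous Proposition and in Remark \ref{rem:countable}, so the corollary is just the assembly of these two facts.
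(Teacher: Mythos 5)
Your argument is correct and is exactly the assembly the paper intends: the preceding Proposition forces every coordinate of an essential counterexample into $j\mathrm{cl}(\emptyset)$, and Remark \ref{rem:countable} makes that set countable, so a countable union over tuple lengths does the rest. This matches the paper's (implicit) reasoning.
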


\subsection{Final Remarks}

We have succeded in producing a transcendence result for the $j$ function in the spirit of the main theorem of \cite{bays-kirby-wilkie}. The key ingredient to make everything work is the Ax-Schanuel theorem for $j$, whereas the general framework can be adapted for any other modular function. 

For instance, consider the modular $\lambda$ function (see Chapter III.1 of \cite{silverman} for details). On $\mathbb{C}$, every elliptic curve is isomorphic to an elliptic curve in the Legendre form $E_{\lambda} : Y^{2} = X(X-1)(X-\lambda)$, where $\lambda\neq 0,1$. The $j$-invariant of such an elliptic curve is:
\begin{equation*}
    j(E_{\lambda}) = 2^{8}\frac{(\lambda^{2}-\lambda+1)^{3}}{\lambda^{2}(\lambda-1)^{2}}.
\end{equation*}
We can then define the function: $\lambda:\mathbb{H}\rightarrow\mathbb{C}\setminus\left\{0,1\right\}$ given by $\tau\mapsto j(E_{\lambda})$. This function is algebraic over $j$ and so we will have a family of polynomials for $\lambda$ like the modular polynomials for $j$, and with them we can define ``$\lambda$-fields''. We can also obtain an Ax-Schanuel type statement for $\lambda$, and then, with the method described here, we can obtain a generic transcendence result for $\lambda$ like Theorem \ref{thm:main}.

Considering the results for the exponential function of \cite{bays-kirby-wilkie} and our work for $j$, there seems to be a general philosophy about what is needed in order to repeat this strategy with a given holomorphic function $f$ to obtain a transcendence result like in Theorem \ref{thm:gdim}. Of course, one needs an Ax-Schanuel statement for $f$, i.e. an inequality of the form:
\begin{equation*}
    \mathrm{t.d.}(\overline{z},f(\overline{z}),\ldots,f^{(k)}(\overline{z}))\geq k\dim(\overline{z}) + \dim^{f}(\overline{z}).
\end{equation*}
Here $\dim^{f}$ is the dimension corresponding to the pregeometry defined by $f$-derivations (i.e. derivations that respect the derivatives of $f$ in the same sense as $j$-derivations). One can count on $f$-derivations defining a pregeometry, but if one wants to construct non-trivial $f$-derivations on $\mathbb{C}$, then one also has to ask that $f$ (or part of $f$, if $f$ has some symmetry properties) be definable in $\mathbb{R}_{\mathrm{an},\exp}$ (see \cite{jones-kirby-servi} and \cite{wilkie} for a general study of this topic). The dimension $\dim$ on the other hand is more interesting. This dimension has to be given by a pregeometry $\mathrm{cl}$ with (at least) the following characteristics: 
\begin{enumerate}[(i)]
    \item $\mathrm{cl}$ has to be definable in the field structure of $\mathbb{Q}$, so that we have a family of pregeometries for every subfield, like we have with $\mathrm{gcl}_{F}$ or with the pregeometry ``linear span''. 
    \item $\mathrm{cl}$ has to identify the algebraic relations between the functions $f$ and its derivatives (at least the derivatives considered in the Ax-Schanuel statement). 
\end{enumerate}

\subsection{Open Problems}
Even if we managed to obtain a transcendence result, there are still a couple of questions that remain open and that could be the subject of future work. 
\begin{enumerate}[(a)]
    \item Is the inequality of Theorem \ref{thm:main} sharp? The answer should be: no. How can one improve the inequality (short of proving MSC)? Considering MSC or the Modular ALW theorem, one would expect that there are no algebraic relations between $j$ and its derivatives evaluated in generic points of different $G$-orbits. A sharp statement might then be obtained by replacing $3nm$ by $4nm$. 
    \item Is there a more explicit description of the pregeometry $j\mathrm{cl}$? In the case of exponential fields it turns out that the pregeometry defined by exponential derivations agrees with the pregeometry defined by Khovanskii systems (this is defined by a non-degenerate system of equations of exponential polynomials). In fact, it is also known that these pregeometries define a dimension that agrees with the one coming from the corresponding predimension. For these results see \cite{kirby}. A natural generalisation of Khovanskii systems for $j$-fields can be defined, and one can ask if they define the pregeometry $j\mathrm{cl}$. However, this is not yet known as the strategy used for exponential fields relies too much on the properties of the exponential map (like the fact that it is a group homomorphism, and the shape of its differential equation). New ideas are needed here to continue this project. An equivalent formulation of this problem was already mentioned, in its general form, at the end of section 4 of \cite{jones-kirby-servi}. To exemplify how different the proofs of transcendence statements of $j$ can differ from analogous statements for $\exp$, it is enough to compare the proofs of the Ax-Schanuel theorem for $j$ and Ax's proof of Theorem $3$ of \cite{ax}. 
\end{enumerate}

\subsection*{Acknowledgements}
I would like to thank Jonathan Pila for invaluable guidance and supervision. I would also like to thank Vahagn Aslanyan and Alex Wilkie for fruitful discussions around the topics presented here. Finally, I would like to thank the referees for their many useful comments.


\begin{thebibliography}{99}
\bibitem{andre}
{Y. Andr\'e}, `Une Introduction aux Motifs', {\em Panoramas et Syth\`eses} 17, SMF, Paris, (2004).

\bibitem{ax}
{J. Ax}, `On Schanuel's Conjecture', {\em Ann. of Math.} (2) 93 (1971) 252--268.

\bibitem{bays-kirby-wilkie}
{M. Bays, J. Kirby \and A. J. Wilkie}, `A Schanuel Property for Exponentially Transcendental Powers', {\em Bulletin of the London Mathematical Society}, 42 (2010) 917--922.

\bibitem{bertolin}
{C. Bertolin}, `P\'eriodes de $1$-motifs et trascendence', {\em J. Number Th.} 97(2) (2002) 204--221.

\bibitem{diaz}
{G. Diaz}, `Transcendence et ind\'ependance alg\'ebriques: liens entre les points de vue elliptique et modulaire, {\em Ramanujan J.} 4 (2000) 157--199.

\bibitem{jones-kirby-servi}
{G. Jones, J. Kirby \and T. Servi}, `Local interdefinability of Weierstrass elliptic functions', {\em Journal of the Institute of Mathematics of Jussieu}, available on CJO (2014) 1--19. doi:20.1017/S1474748014000425.

\bibitem{kirby}
{J. Kirby}, `Exponential algebraicity in exponential fields', {\em Bulletin of the London Mathematical Society}, 42(5) (2010) 879--890.

\bibitem{klyachko}
{A. Klyachko \and O. Kara}, `Singularities of the Modular Curve', {\em Finite Fields and Their Applications} 7, (2001) 415--420.

\bibitem{lang}
{S. Lang}, `Algebra, revised third edition', {\em Graduate Texts in Mathematics} volume 211 (Springer, New York, 2002).

\bibitem{macpherson}
{D. Macpherson}, `Notes on o-Minimality and Variations', \emph{Model Theory, Algebra, and Geometry}, Mathematical Sciences Research Institute Publications, volume 39 (Cambridge University Press, Cambridge, 2000) 97--130.

\bibitem{marker-messmer-pillay}
{D. Marker, M. Messmer \and A. Pillay}, `Model Theory of Fields, second edition', {\em Lecture Notes in Logic} (2006).

\bibitem{nesterenko}
{Y. V. Nesterenko}, `Algebraic independence for values of Ramanujan functions', Introduction to Algebraic Independence Theory, {\em Lecture Notes in Mathematics} 1752, Y. V. Nesterenko and P. Philippon (eds), (2001) Springer-Verlag, Berlin, 27--46. 

\bibitem{peterzil-starchenko}
{Y. Peterzil \and S. Starchenko}, `Uniform definability of the Weierstrass $\wp$ functions and generalized tori of dimension one', \emph{Selecta Math. N. S} 10, (2004) 525--550.

\bibitem{pila}
{J. Pila}, `Modular Ax-Lindemann-Weierstrass with derivatives', \emph{Notre Dame Journal Formal Logic} 54 (2013) (Ol\'eron proceedings) 553--565.

\bibitem{pila2}
{J. Pila}, `Functional Transcendence via o-minimality', \emph{O-minimality and Diophantine Geometry}, London Mathematical Society Lecture Notes Series 421 (Cambridge University Press, Cambridge, 2015) 66--99.

\bibitem{pila-tsimerman}
{J. Pila \and J. Tsimerman}, `Ax-Schanuel for the $j$-Function', \emph{Duke Math. J.} 165 (2016) no.13, 2587--2605. doi:10.1215/00127094-3620005. http://projecteuclid.org/euclid.dmj/1465576225. 

\bibitem{pillay-steinhorn}
{A. Pillay \and C. Steinhorn}, `Definable sets and ordered structures I', {\em Trans. AMS} 295(2), (1986) 565--592.

\bibitem{schneider}
{T. Schneider}, `Arithmetische Untersuchungen elliptischer Integrale', \emph{Math. Ann.} 113 (1937) 1--13.

\bibitem{silverman}
{J. Silverman}, \emph{The Arithmetic of Elliptic Curves}, 2nd edn, Graduate Texts in Mathematics, volume 106 (Springer, Dordrecht, 2009).

\bibitem{silverman2}
{J. Silverman}, \emph{Advanced Topics in the Arithmetic of Elliptic Curves}, Graduate Texts in Mathematics, volume 151 (Springer-Verlag, New York, 1994).

\bibitem{tent-ziegler}
{K. Tent \and M. Ziegler}, `A Course in Model Theory', \emph{Lecture Notes in Logic}, volume 40, (Cambridge University Press, Cambridge 2012).

\bibitem{wilkie}
{A. J. Wilkie}, `Some Local Definability Theory for Holomorphic Functions', \emph{Model Theory with Applications to Algebra and Analysis}, vol. 1, London Mathematical Society Lecutre Note Series 349 (Cambridge University Press, Cambridge, 2008) 197--213.

\bibitem{zagier}
{D. Zagier}, `Elliptic Modular Forms and Their Applications', \emph{The 1-2-3 of Modular Forms, Lectures at a Summer School in Nordfjordeid, Normway}, Kristian Ranestad (ed), (2008) Springer-Verlag, Berlin, 1--103. MR2409678 (2010b:11047).
\end{thebibliography}
\end{document}